\newcommand{\ccc}{{\mathbf C}}
\newcommand{\nnn}{{\mathbf N}}
\newcommand{\qqq}{{\mathbf Q}}
\newcommand{\rrr}{{\mathbf R}}
\newcommand{\zzz}{{\mathbf Z}}
\renewcommand{\ggg}{{\frak{g}}}
\newcommand{\hhh}{{\frak{h}}}
\newtheorem{prop}{Proposition}[section]
\newtheorem{lemma}{Lemma}[section]
\newtheorem{note}{Note}[section]
\numberwithin{equation}{section}
\begin{document}

\title{A note on modular properties of non-principal and
non-coprincipal admissible $C^{(1)}_2$-modules of integer level}

\author{\footnote{12-4 Karato-Rokkoudai, Kita-ku, Kobe 651-1334, 
Japan, \qquad
wakimoto.minoru.314@m.kyushu-u.ac.jp, \hspace{5mm}
wakimoto@r6.dion.ne.jp 
}{ Minoru Wakimoto}}

\date{\empty}

\maketitle

\begin{center}
Abstract
\end{center}
For the affine Lie algebra $C_2^{(1)}$ we study non-principal 
and non-coprincipal admissible modules of integer level and 
their quantum Hamiltonian reduction, and show that they have 
$\Gamma_0(2)$-modular invariance.

\maketitle

\tableofcontents

\section{Introduction}

Admissible representations are introduced in \cite{KW5} as a class of 
irreducible highest weight modules of affine Lie algebras whose characters 
are modular functions, and are classified in \cite{KW1989}. 
They play very important role, in particular, 
in the study of representations of W-algebras associated to affine 
Lie algebras. Among these admissible modules, modular properties of 
principal or coprincipal admissible modules and their 
quantum Hamiltonian reduction have been studied in several literatures 
(e.g., \cite{AE2019}, \cite{KW2024}).
The aim of this paper is to study the characters of other 
admissible modules, namely non-principal and non-coprincipal 
admissible modules, in the simplest case $C^{(1)}_2$. 
It turns out that, in the case treated in this paper, the space of 
these characters are not $SL_2(\zzz)$-invariant but invariant under 
the action of the congruence subgroup $\Gamma_0(2)$.

As is known well by \cite{K1} (integrable case) and \cite{KW2024} 
(principal admissible case), the normalised characters 
$$
{\rm ch}_{L(\Lambda)}(\tau,z.t)
\,\ = \,\ \frac{A_{\Lambda+\rho}(\tau,z,t)}{R(\tau,z,t)}
$$
of irreducible highest weight modules of non-twisted affine Lie algebras 
of given level $K$ span the $SL_2(\zzz)$-invariant space,
where 
$$
A_{\Lambda+\rho} \,\ := \,\ 
q^{\frac{|\overline{\Lambda+\rho}|^2}{2(K+h^{\vee})}}
\sum_{w \in W}\varepsilon(w) e^{w(\Lambda+\rho)}
$$
and $R$ is the denominator 
(for admissible representations the Weyl group $W$ is replaced with 
$W_{\Lambda}$). 
However, for non-principal and non-coprincipal admissible $C_2^{(1)}$-modules 
$L(\Lambda_{n_1,n_2}^{\, [K]})$ considered in this paper, 
Proposition \ref{paper:prop:2025-1105d} shows that,
$$
{\rm ch}_{L(\Lambda_{n_1,n_2}^{\, [K]})}(\tau,z,t)
\, = \,\ \frac{A_{\Lambda_{n_1,n_2}^{\, [K]}+\rho}(\tau,z, \lq \lq 2t")
}{R(\tau,z,t)}
$$
span the $\Gamma_0(2)$-invariant space.

\medskip

In section \ref{sec:QHR:minimal}, we consider the quantum Hamiltonian 
reduction only for the minimal nilpotent element. This is because, 
for admissible $C^{(1)}_2$-modules treated in this paper, 
the quantum Hamiltonian reduction associated to a non-zero 
nilpotent element other than the minimal nilpotent element vanishes. 
This phenominon seems to be in good matching with 
Corollary 1 in \cite{KW2024} in the principal admissible case 
or with Corollary 6.1 in \cite{KW2025} in the coprincipal 
(=subprincipal) admissible case, 
since in this paper we consider the case $u=1$, which is the smallest 
possible value for positive odd integers $u$. 
It may be expected that the quantum reduction associated 
to other nilpotent elements will take place 
with $\Gamma_0(2)$-modular invariance just like 
Corollary 1 in \cite{KW2024} and Corollary 6.1 in \cite{KW2025}
if we consider admissible modules for general $u \in \nnn_{\rm odd}$.

\section{Non-principal and non-coprincipal admissible 
$C^{(1)}_2$-modules of integer level}
\label{sec:C2:character:admissible}

In this paper, we consider the affine Lie algebra $\ggg = \ggg(C_2^{(1)})$ 
of type $C_2^{(1)}$ with simple root system \, $\Pi =$
\hspace{-5mm}
\setlength{\unitlength}{1mm}
\begin{picture}(36,9)
\put(6,1){\circle{3}}
\put(17,1){\circle{3}}
\put(28,1){\circle{3}}
\put(7.6,1.6){\vector(1,0){8}}
\put(7.6,0.4){\vector(1,0){8}}
\put(26.5,1.6){\vector(-1,0){8}}
\put(26.5,0.4){\vector(-1,0){8}}
\put(6,5){\makebox(0,0){$\alpha_0$}}
\put(17,5){\makebox(0,0){$\alpha_1$}}
\put(28,5){\makebox(0,0){$\alpha_2$}}
\put(11,-2){\makebox(0,0){$-1$}}
\put(22,-2){\makebox(0,0){$-1$}}
\end{picture}
and inner products 
$$
\big((\alpha_i|\alpha_j)\big)_{i,j=0,1,2}
= \begin{pmatrix}
2 & -1 & 0 \\
-1 & 1 & -1 \\
0 & -1 & 2
\end{pmatrix}
$$
For notations and basic properties of affine Lie algebras, 
we follow from the Kac's book \cite{K1}.
The set $\Delta^{re}$ of all real (=non-isotropic) roots of 
$C_2^{(1)}$ is
$$
\Delta^{re} = \,\ \{
n\delta+\alpha \,\ ; \,\ \alpha \in \overline{\Delta} \,\ 
{\rm and} \,\ n \in \zzz\}
$$
where
$$
\overline{\Delta} \,\ = \,\ 
\underbrace{\{\alpha_1, \,\ \alpha_2, \,\ 
\alpha+\alpha_2, \,\ 2\alpha_1+\alpha_2\}}_{
{\displaystyle \overline{\Delta}_+
}} \,\ \cup \,\ 
(-\overline{\Delta}_+)
$$
and $\delta := \alpha_0+2\alpha_1+\alpha_2$
is the primitive imaginary root.

\medskip

For a real root $\alpha$, let 
$\alpha^{\vee}:= \frac{2}{|\alpha|^2}\alpha$ denote its coroot.
Then we have 
$$
\left\{
\begin{array}{lcr}
\alpha_1^{\vee} &=& 2 \, \alpha_1 \\[0.5mm]
\alpha_2^{\vee} &=& \alpha_2
\end{array}\right. \hspace{5mm}
\left\{
\begin{array}{rcccl}
(\alpha_1+\alpha_2)^{\vee} &=& 2 \, (\alpha_1+\alpha_2) 
&=& \alpha_1^{\vee}+ 2\alpha_2^{\vee} 
\\[0.5mm]
(2\alpha_1+\alpha_2)^{\vee} &=& 2 \alpha_1+\alpha_2
&=& \alpha_1^{\vee}+ \alpha_2^{\vee}
\end{array}\right.
$$
and the simple coroot system $\Pi^{\vee} \, : $ 
\hspace{-5mm}
\setlength{\unitlength}{1mm}
\begin{picture}(36,9)
\put(6,1){\circle{3}}
\put(17,1){\circle{3}}
\put(28,1){\circle{3}}
\put(15.4,1.6){\vector(-1,0){8}}
\put(15.4,0.4){\vector(-1,0){8}}
\put(18.6,1.6){\vector(1,0){8}}
\put(18.6,0.4){\vector(1,0){8}}
\put(6,5){\makebox(0,0){$\alpha_0^{\vee}$}}
\put(17,5){\makebox(0,0){$\alpha_1^{\vee}$}}
\put(28,5){\makebox(0,0){$\alpha_2^{\vee}$}}
\put(11,-2){\makebox(0,0){$-2$}}
\put(22,-2){\makebox(0,0){$-2$}}
\end{picture}
and inner products 
$$
\big((\alpha_i^{\vee}|\alpha_j^{\vee})\big)_{i,j=0,1,2}
= \begin{pmatrix}
2 & -2 & 0 \\
-2 & 4 & -2 \\
0 & -2 & 2
\end{pmatrix}
$$
The set of all real coroots of $C^{(1)}_2$ is 
{\allowdisplaybreaks
\begin{eqnarray*}
\Delta^{\vee re} &=& 
\big\{ \alpha^{\vee} \,\ ; \,\ \alpha \in \Delta^{re}\big\}
\\[1mm]
&=& \Big\{
2nc \pm \alpha_1^{\vee}, \,\ 
2nc \pm (\alpha_1^{\vee}+2\alpha_2^{\vee}), \,\ 
nc \pm \alpha_2^{\vee}, \,\ 
nc \pm (\alpha_1^{\vee}+\alpha_2^{\vee}) \,\ ; \,\ 
n \, \in \, \zzz\Big\}
\end{eqnarray*}}
where 
$c := 
\alpha_0^{\vee}+\alpha_1^{\vee}+\alpha_2^{\vee}
\, = \, 
\alpha_0+2\alpha_1+\alpha_2 \, = \, \delta.$

\medskip

The Cartan subalgebra $\hhh$ of $\ggg$, being identified with 
its dual space $\hhh^{\ast}$ via the inner product 
$( \cdot | \cdot)$, is 
\begin{equation}
\hhh \,\ = \,\ \ccc \, \Lambda_0
\, \oplus \, \overline{\hhh}
\, \oplus \, \ccc \, \delta
\label{paper:eqn:2025-1031b}
\end{equation}
where $\overline{\hhh}=\ccc \alpha_1 \oplus \ccc \alpha_2$
and $\Lambda_0$ is the element satisfying 
$(\Lambda_0|\alpha_j)=\delta_{j,0}$ and 
$(\Lambda_0|\Lambda_0)=0$. Let $\Lambda_i$ be the elements 
satisfying $(\Lambda_i|\alpha_j^{\vee})=\delta_{i,j}$ 
and $(\Lambda_i|\Lambda_0)=0$ for all $i,j \in \{0,1,2\}$. 
For $\lambda \in \hhh$, let $\overline{\lambda}$ denote 
its $\overline{\hhh}$-component with respect to the
decomposition \eqref{paper:eqn:2025-1031b}.
Note that $\Lambda_i =\Lambda_0+\overline{\Lambda}_i$ for 
$i=1,2$.
In the case $C_2^{(1)}$, the Weyl vector
$\rho =\sum_{i=0}^2\Lambda_i$ and the dual Coxeter number 
$h^{\vee} = (\rho|\delta)$ are as follows:
\begin{equation}
\rho=3\Lambda_0+2\alpha_1+\frac32 \alpha_2, \quad 
|\rho|^2=\frac52 \quad {\rm and} \quad 
h^{\vee}=3
\label{paper:eqn:2025-1031c}
\end{equation}
Let $\Delta_+^{re}$ (resp. $\Delta^{\vee re}_+$) be the set of 
positive real roots (resp. coroots), namely,
$$
\Delta_+^{re} = \{\alpha \in \Delta^{re} \,\ ; \,\ (\rho|\alpha)>0\} 
\quad {\rm and} \quad 
\Delta_+^{\vee re} = \{\alpha^{\vee} \in \Delta^{\vee re} \,\ ; \,\ 
(\rho|\alpha^{\vee})>0\}
$$

\subsection{Non-principal and non-coprincipal admissible weights}
\label{subsec:C2:admissible:weights}

For a positive integer $u \in \nnn$, define subsets 
$\Delta^{\vee re}_{(u)}$ and $S_{(u)}^{\vee}$ of
$\Delta^{\vee re}$ by 
$$
\begin{array}{lcl}
\Delta^{\vee re}_{(u)} &:=& \big\{
2nuc \pm \alpha_1^{\vee}, \,\ 
2nuc \pm (\alpha_1^{\vee}+2\alpha_2^{\vee}), \,\ 
\,\ ; \,\ 
n \, \in \, \zzz\big\}
\\[2.5mm]
S_{(u)}^{\vee} &:=& \big\{
2uc - \alpha_1^{\vee}, \,\ \alpha_1^{\vee}\big\}
\, \cup \, 
\big\{2uc - (\alpha_1^{\vee}+2\alpha_2^{\vee}), \,\ 
\alpha_1^{\vee}+2\alpha_2^{\vee}\big\}
\end{array}
$$
We note that $S_{(u)}^{\vee}$ is the simple coroot system of 
$A^{(1)}_1 \oplus A^{(1)}_1$.
The Weyl group $W_{(u)}$ of $\Delta_{(u)}^{\vee re}$ is
$$
W_{(u)} \, = \, 
\big\langle r_{\alpha_1}, \, r_{\alpha_1+\alpha_2}\rangle 
\ltimes
\big\{t_{2ju\alpha_1+2ku(\alpha_1+\alpha_2)} \,\ ; \,\ 
j, \, k \, \in \, \zzz \big\}
$$
where $t_{\alpha}$ is the transformation on $\hhh$ defined in
\cite{K1}:
$$
t_{\alpha}(\lambda)=
\lambda+(\lambda|\delta)\alpha
-
\Big\{\frac{|\alpha|^2}{2}(\lambda|\delta)
+
(\lambda|\alpha)\Big\} \, \delta
$$

For an element $\Lambda \in \hhh^{\ast}$, we put 
\begin{equation}
\Delta^{\vee re}_{\Lambda} \, := \,\ 
\{\alpha^{\vee} \,\ ; \,\ 
(\Lambda+\rho| \, \alpha^{\vee}) \, \in \, \zzz\}
\label{paper:eqn:2025-1031d1}
\end{equation}
and
\begin{equation}
\Pi^{\vee}_{\Lambda} \, := \,\ 
\text{the set of simple elements in} \, 
\Delta^{\vee re}_{\Lambda} \cap \Delta^{\vee re}_+
\label{paper:eqn:2025-1031d2}
\end{equation}

\medskip


\begin{lemma} \,\ 
\label{paper:lemma:2025-1031a}
Let $\Lambda \in \hhh^{\ast}$, $u \in \nnn$ and $K \in \qqq$
be elements satisfying the conditions
\begin{subequations}
{\allowdisplaybreaks
\begin{eqnarray}
(\Lambda+\rho| \, \alpha^{\vee}) & \not\in & \zzz_{\leq 0}
\hspace{10mm} 
{\rm for} \,\ {}^{\forall} \alpha^{\vee} \in 
\Delta^{\vee re}_+
\label{paper:eqn:2025-1101a1}
\\[1mm]
\Delta_{\Lambda}^{\vee re} &=& \Delta^{\vee re}_{(u)}
\label{paper:eqn:2025-1101a2}
\\[1mm]
(\Lambda| \, \delta) &=& K
\label{paper:eqn:2025-1101a3}
\end{eqnarray}}
\end{subequations}
Then
\begin{enumerate}
\item[{\rm 1)}] \,\ $u \, \in \, \nnn_{\rm odd}$

\item[{\rm 2)}] \,\ $K+h^{\vee} \, = \, \dfrac{p}{u}$ \quad
{\rm where} \quad $p \in \nnn_{\geq 2}$ \,\ {\rm and} \,\ 
$\gcd(u,p)=1$

\item[{\rm 3)}] \,\ $\Lambda+\rho \,\ = \,\ 
(K+h^{\vee})\Lambda_0
\, + \, 
\dfrac{n_1}{2} \, \alpha_1 
\, + \, 
\dfrac{n_2}{2} \, (\alpha_1+\alpha_2)$

\noindent
where $n_1, \, n_2 \, \in \, \nnn$ \,\ such that \,\ 
$n_1, \, n_2 \, < \, 2p$ \,\ and \,\ 
$n_1 \not\equiv n_2 \mod 2$
\end{enumerate}
\end{lemma}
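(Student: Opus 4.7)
The plan is to parametrise $\Lambda+\rho$ explicitly via the decomposition \eqref{paper:eqn:2025-1031b} and then read off the constraints from condition (b) by inspecting each one-parameter family of real coroots. Since $(\delta|\alpha^\vee)=0$ for every real coroot, the $\ccc\delta$-component of $\Lambda+\rho$ is invisible to (b); together with $(\Lambda_0|\delta)=1$ and $(\rho|\delta)=h^\vee$, condition (c) forces the $\Lambda_0$-coefficient of $\Lambda+\rho$ to equal $K+h^\vee$. Writing the $\overline{\hhh}$-part as $a\alpha_1+b(\alpha_1+\alpha_2)$, I compute once and for all from the Cartan data
\[
(\Lambda+\rho|\alpha_1^\vee)=2a,\quad (\Lambda+\rho|(\alpha_1+\alpha_2)^\vee)=2b,\quad (\Lambda+\rho|\alpha_2^\vee)=b-a,\quad (\Lambda+\rho|(2\alpha_1+\alpha_2)^\vee)=a+b,
\]
so that on any real coroot $nc+\beta^\vee$ the pairing with $\Lambda+\rho$ equals $n(K+h^\vee)$ plus one of these four constants.

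The next step encodes condition (b) family by family. The families $2nc\pm\alpha_1^\vee$ and $2nc\pm(\alpha_1^\vee+2\alpha_2^\vee)$ must pair integrally with $\Lambda+\rho$ iff $u\mid n$: taking $n=0$ yields $2a,2b\in\zzz$, and the full condition translates, after writing $K+h^\vee=p/q$ in lowest terms with $q>0$, into ``$q\mid 2n\iff u\mid n$''. The families $nc\pm\alpha_2^\vee$ and $nc\pm(\alpha_1^\vee+\alpha_2^\vee)$ must never pair integrally: taking $n=0$ yields $a-b,\,a+b\notin\zzz$, which combined with $2a,2b\in\zzz$ forces $n_1:=2a$ and $n_2:=2b$ to be integers of opposite parity. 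This already establishes the parity assertion in 3).

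Next I analyse $q$ by cases. If $q$ is odd then ``$q\mid 2n\iff u\mid n$'' gives $q=u$, so $u$ is odd. If $q$ is even, write $q=2q'$; then $q\mid 2n\iff q'\mid n$, forcing $q=2u$, hence $p$ is odd as $\gcd(p,2u)=1$. But then at $n=u$ one has $n(K+h^\vee)=p/2\in\tfrac12+\zzz$, and since $b-a$ is also a half-odd-integer, $(\Lambda+\rho|uc+\alpha_2^\vee)$ would be an integer even though $uc+\alpha_2^\vee\notin\Delta^{\vee re}_{(u)}$, contradicting (b). Hence $q=u$ is odd, which proves 1) and the $p/u$ shape of 2). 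For the bounds, condition (a) applied to the simple coroots $\alpha_1^\vee$ and $(\alpha_1+\alpha_2)^\vee$ gives $n_1,n_2\geq 1$, and applied to $2uc-\alpha_1^\vee,\,2uc-(\alpha_1^\vee+2\alpha_2^\vee)\in S_{(u)}^\vee\subset\Delta^{\vee re}_+$ gives $2p-n_1,\,2p-n_2\geq 1$, so $n_i<2p$; the latter pair also forces $p>0$, and $p=1$ would force $n_1=n_2=1$, violating opposite parity, so $p\geq 2$.

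The main obstacle I anticipate is the exclusion of the even-denominator case $q=2u$: the $\alpha_1^\vee$-type families by themselves are fully compatible with either $q=u$ or $q=2u$, and the exclusion relies essentially on the mixed input from an $\alpha_2^\vee$-type coroot at $n=u$ combined with the half-integrality of $b-a$. This is precisely where the non-principal/non-coprincipal character of the weight (the parity mismatch $n_1\not\equiv n_2\pmod 2$) constrains the arithmetic of the level and produces the oddness of $u$.
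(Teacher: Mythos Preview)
Your proof is correct and follows essentially the same route as the paper's: both parametrise $\overline{\Lambda+\rho}$ in the basis $\{\alpha_1,\alpha_1+\alpha_2\}$, extract the integrality of $n_1,n_2$ from the $\alpha_1^\vee$-type families, the parity mismatch from the $\alpha_2^\vee$-type families at $n=0$, and the bounds $1\le n_i<2p$ from condition (a) applied to $\alpha_1^\vee,\ (\alpha_1+\alpha_2)^\vee$ and to the simple coroots $2uc-\alpha_1^\vee,\ 2uc-(\alpha_1^\vee+2\alpha_2^\vee)$ of $S_{(u)}^\vee$. The only organisational difference is that you write $K+h^\vee=p/q$ in lowest terms and do a clean parity split on $q$, eliminating the case $q=2u$ via the coroot $uc+\alpha_2^\vee$, which gives you $\gcd(p,u)=1$ for free; the paper instead sets $p':=2u(K+h^\vee)$, shows $p'$ is even via the same half-integer obstruction, and proves $\gcd(u,p')=1$ by a separate contradiction argument (producing a spurious integral coroot $2u_0c-\alpha_1^\vee$ with $0<u_0<u$). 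Both arrive at $p\ge 2$ the same way, from $n_1+n_2\ge 3$.
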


\begin{proof} \,\ By conditions \eqref{paper:eqn:2025-1101a1} 
and \eqref{paper:eqn:2025-1101a2}, we have 
\begin{subequations}
{\allowdisplaybreaks
\begin{eqnarray}
(\Lambda+\rho \, | \, \alpha^{\vee}) &\not\in& - \, \zzz_{\geqq 0}
\hspace{10mm} ({}^{\forall} \alpha^{\vee} \, \in \, \Delta^{\vee}_+)
\label{paper:eqn:2025-1101c1}
\\[2mm]
(\Lambda+\rho \, | \, 2uc-\alpha_1^{\vee})
&=& 2u(K+h^{\vee})
-(\Lambda+\rho|\alpha_1^{\vee}) \, \in \, \nnn
\label{paper:eqn:2025-1101c2}
\\[2mm]
(\Lambda+\rho \, | \, 2uc-(\alpha_1^{\vee}+2\alpha_2^{\vee}))
&=& 2u(K+h^{\vee})
-(\Lambda+\rho|\alpha_1^{\vee}+2\alpha_2^{\vee}) \, \in \, \nnn
\label{paper:eqn:2025-1101c3}
\\[2mm]
(\Lambda+\rho \, | \, nc \pm \alpha_2^{\vee})
&=& n(K+h^{\vee})
\pm (\Lambda+\rho|\alpha_2^{\vee}) \, \not\in \, \zzz
\hspace{10mm} ({}^{\forall}n \, \in \, \zzz)
\label{paper:eqn:2025-1101c4}
\\[2mm]
(\Lambda+\rho \, | \, nc \pm (\alpha_1^{\vee}+\alpha_2^{\vee}))
&=& n(K+h^{\vee})
\pm (\Lambda+\rho|\alpha_1^{\vee}+\alpha_2^{\vee}) \, \not\in \, \zzz
\hspace{5mm} ({}^{\forall}n \, \in \, \zzz)
\label{paper:eqn:2025-1101c5}
\end{eqnarray}}
\end{subequations}
Putting \, $p' := 2u(K+h^{\vee})$, these equations imply that
\begin{subequations}
\begin{equation}
\left\{
\begin{array}{lcc}
(\Lambda+\rho|\alpha_1^{\vee}) &\in & \nnn \\[2mm]
(\Lambda+\rho|\alpha_1^{\vee}+2\alpha_2^{\vee}) &\in & \nnn \\[2mm]
(\Lambda+\rho|\alpha_2^{\vee}) &\in & \frac12 \, \zzz_{\rm odd}
\end{array}\right. \hspace{10mm} \left\{
\begin{array}{ccccc}
p' &\in & \nnn & & \\[1mm]
K+h^{\vee} &=& \dfrac{p'}{2u} & & \\[3mm]
n \cdot \dfrac{p'}{2u} &\not\in & \frac12 \, \zzz_{\rm odd}
& & ({}^{\forall}n \, \in \, \zzz)
\end{array}\right. 
\label{paper:eqn:2025-1101b1}
\end{equation}
From \eqref{paper:eqn:2025-1101b1}, we have 
\begin{equation}
\left\{
\begin{array}{lclcc}
p' &\in & \nnn_{\rm even} & {\rm i.e,} & p \, := \, \frac{p'}{2} \, \in \, \nnn
\\[1mm]
u &\in & \nnn_{\rm odd} & &
\end{array}\right.  \qquad {\rm then} \,\ K+h^{\vee}=\dfrac{p}{u}
\label{paper:eqn:2025-1101b2}
\end{equation}
Also, by \eqref{paper:eqn:2025-1101c2} and \eqref{paper:eqn:2025-1101c3},
we have 
\begin{equation}
\left\{
\begin{array}{lcc}
p'-(\Lambda+\rho \, | \, \alpha_1^{\vee}) &\in & \nnn
\\[2mm]
p'-(\Lambda+\rho \, | \, \alpha_1^{\vee}+2\alpha_2^{\vee}) &\in & \nnn
\end{array}\right.
\label{paper:eqn:2025-1101b3}
\end{equation}
\end{subequations}

In order to prove ${\rm gcd}(u,p')=1$, assume that 
$u=au_0$ and $p'=ap'_0$ where $a \in \nnn_{\geq 2}$ and 
$u_0, p'_0 \in \nnn$. Then, by $2u(K+h^{\vee})=p'$, 
we have $2u_0(K+h^{\vee})=p'_0$ \, so
$$
(\Lambda+\rho| \, 2u_0c-\alpha^{\vee}_1) \, = \, 
2u_0(K+h^{\vee})-(\Lambda+\rho| \, \alpha^{\vee}_1) 
\,\ \in \,\ \zzz
$$
This means that 
$$
2u_0c-\alpha^{\vee}_1 \, \in \, \Delta_{\Lambda}^{\vee re}
\qquad {\rm so} \qquad 
2u_0c-\alpha^{\vee}_1 \, \in \, \Delta_{(u)}^{\vee re} \, ,
$$
but this is impossible since $0<u_0<u$.

\medskip

In order to prove $p' \geq 4$ and the claim 3), we put 
\begin{equation}
(\Lambda+\rho| \, \alpha_1^{\vee}) \, =: \, n_1 \qquad 
{\rm and} \qquad 
(\Lambda+\rho| \, \alpha_1^{\vee}+2\alpha_1^{\vee}) \, =: \, n_2
\label{paper:eqn:2025-1101d}
\end{equation}
Then, by \eqref{paper:eqn:2025-1101b1} and 
\eqref{paper:eqn:2025-1101b3}, we have 
\begin{equation}
\left\{
\begin{array}{ccc}
n_1, \,\ n_2 &\in & \nnn \\[0mm]
n_1-n_2 & \in & \zzz_{\rm odd}
\end{array}\right. \hspace{5mm} {\rm and} \hspace{5mm}
1 \, \leq \, n_1, \,n_2 \, \leq \, p'-1
\label{paper:eqn:2025-1101e1}
\end{equation}
From this, we have 
\begin{equation}
2 \, \leq \, 
\underbrace{n_1+n_2}_{\substack{ \rotatebox{-90}{$\geq$}
\\[0mm] {\displaystyle 3
}}} \, \leq \, 2(p'-1) \hspace{5mm} 
\therefore \quad 2p' \, \geq \, 5 \hspace{5mm} 
\underset{\substack{\\[0.5mm] \uparrow \\[1mm] 
p' \, \text{is even}
}}{\therefore} p' \, \geq \, 4
\label{paper:eqn:2025-1101e2}
\end{equation}

Since 
$$
\begin{pmatrix}
(\alpha_1| \, \alpha_1^{\vee}) & 
(\alpha_1| \, \alpha_1^{\vee}+2\alpha_2^{\vee})
\\[1mm]
(\alpha_1+\alpha_2| \, \alpha_1^{\vee}) & 
(\alpha_1+\alpha_2| \, \alpha_1^{\vee}+2\alpha_2^{\vee})
\end{pmatrix}
= 2 \, 
\begin{pmatrix}
(\alpha_1| \, \alpha_1) & (\alpha_1| \, \alpha_1+\alpha_2)
\\[1mm]
(\alpha_1+\alpha_2| \, \alpha_1) & 
(\alpha_1+\alpha_2| \, \alpha_1+\alpha_2)
\end{pmatrix}
= 
\begin{pmatrix}
2 & 0 \\[1mm]
0 & 2
\end{pmatrix} ,
$$
the equation \eqref{paper:eqn:2025-1101d} means that
$$
\overline{\Lambda+\rho} \,\ = \,\ 
\frac{n_1}{2} \, \alpha_1 \, + \, 
\frac{n_2}{2} \, (\alpha_1+\alpha_2)
$$
so 
$$
\Lambda+\rho \,\ = \,\ (K+h^{\vee})\Lambda_0
\, + \, 
\frac{n_1}{2} \, \alpha_1 
\, + \, 
\frac{n_2}{2} \, (\alpha_1+\alpha_2)
$$
proving 3). 
\end{proof}

\medskip 
An element $\Lambda \in \hhh^{\ast}$ satisfying the conditions 
\eqref{paper:eqn:2025-1101a1} $\sim$ 
\eqref{paper:eqn:2025-1101a1} is called \lq \lq an admissible 
weight of type $S^{\vee}_{(u)}$ and of level $K$".
For $K \in \qqq$ and $n_1, \, n_2 \, \in \, \nnn$, we put

\begin{subequations}
\begin{equation}
\Lambda^{[K]}_{n_1,n_2} \, := \,\ 
K\Lambda_0 
\, + \, \frac{n_1-1}{2} \, \alpha_1
\, + \, \frac{n_2-3}{2} \, (\alpha_1+\alpha_2) 
\label{paper:eqn:2025-1101f1}
\end{equation}
so that 
\begin{equation}
\Lambda^{[K]}_{n_1,n_2}+\rho \, = \,\ 
(K+h^{\vee})\Lambda_0 
\, + \, \frac{n_1}{2} \, \alpha_1
\, + \, \frac{n_2}{2} \, (\alpha_1+\alpha_2) 
\label{paper:eqn:2025-1101f2}
\end{equation}
\end{subequations}
Then, by Lemma \ref{paper:lemma:2025-1031a}, we obtain the 
following:

\medskip

\begin{prop} \,\
\label{paper:prop:2025-1031a}
For $K \in \qqq$ and $u \in \nnn$, we put 
\begin{equation}
P_K^+(S^{\vee}_{(u)}) := 
\text{the set of admissible weights
$\Lambda$ of type $S^{\vee}_{(u)}$ and of level $K$}.
\label{paper:eqn:2025-1101g}
\end{equation}
Then
\begin{enumerate}
\item[{\rm 1)}] \,\ $P_K^+(S^{\vee}_{(u)}) 
\, \ne \, \emptyset
\quad \Longleftrightarrow \quad 
u \in \nnn_{\rm odd}, \,\ u(K+3) \in \nnn_{\geq 2} \,\ 
\text{and} \,\ \gcd(u, u(K+3))=1$

\item[{\rm 2)}] \,\ In the case when $K$ and $u$ satisfy the
conditions in the above {\rm 1)},
$$
P_K^+(S^{\vee}_{(u)}) \,\ = \,\ 
\left\{\Lambda^{[K]}_{n_1,n_2} 
\,\ ; \,\ 
\begin{array}{l}
n_1, \, n_2 \, \in \, \nnn \\[1mm]
n_1, \, n_2 \, < \, 2u(K+3) \\[1mm]
n_1-n_2 \, \in \, \zzz_{\rm odd}
\end{array} \right\} 
$$
\item[{\rm 3)}] \,\ If $\Lambda^{[K]}_{n_1,n_2} 
\, \in \, P^+_K(S^{\vee}_{(u)})$, then 
$$
\Lambda^{[K]}_{2u(K+3)-n_1, \, n_2}, \,\ 
\Lambda^{[K]}_{n_1, \, 2u(K+3)-n_2}, \,\ 
\Lambda^{[K]}_{2u(K+3)-n_1, \, 2(K+3)-n_2} \, \in \, 
P^+_K(S^{\vee}_{(u)})
$$
\end{enumerate}
\end{prop}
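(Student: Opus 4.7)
The plan is to deduce Proposition~\ref{paper:prop:2025-1031a} from Lemma~\ref{paper:lemma:2025-1031a}, supplementing the lemma with a direct verification of the converse direction of~(2) and a short symmetry check for~(3).

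For the ``$\Rightarrow$'' direction of~(1) and~(2), suppose $\Lambda \in P^+_K(S^{\vee}_{(u)})$. Lemma~\ref{paper:lemma:2025-1031a} immediately gives $u \in \nnn_{\rm odd}$ and $K+h^{\vee} = p/u$ with $p \in \nnn_{\geq 2}$ and $\gcd(u,p)=1$; since $h^{\vee}=3$ by~\eqref{paper:eqn:2025-1031c}, this is equivalent to $u(K+3) = p \in \nnn_{\geq 2}$ with $\gcd(u, u(K+3))=1$, proving~(1). Part~3) of the lemma, together with~\eqref{paper:eqn:2025-1101f2}, identifies $\Lambda$ as $\Lambda^{[K]}_{n_1,n_2}$ with $(n_1, n_2)$ in the index set described in~(2).

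For the ``$\Leftarrow$'' direction of~(2), I would verify that every $\Lambda^{[K]}_{n_1, n_2}$ with indices satisfying the stated conditions actually meets \eqref{paper:eqn:2025-1101a1}--\eqref{paper:eqn:2025-1101a3}. Condition~\eqref{paper:eqn:2025-1101a3} is immediate from~\eqref{paper:eqn:2025-1101f1}. For \eqref{paper:eqn:2025-1101a1} and \eqref{paper:eqn:2025-1101a2}, I would evaluate $(\Lambda^{[K]}_{n_1,n_2}+\rho\,|\,\alpha^{\vee})$ on each of the four families making up $\Delta^{\vee re}$. The long-coroot pairings are $2n(K+h^{\vee}) \pm n_i = \tfrac{2np}{u} \pm n_i$ (where $p = u(K+3)$), which are integers exactly when $u \mid n$; writing $n=mu$ they become $2mp \pm n_i$, reproducing $\Delta^{\vee re}_{(u)}$ and providing positive values for $m \geq 1$ since $n_i < 2p$, together with the positive values $n_i > 0$ at $m=0$ on the genuinely positive coroots $\alpha_1^{\vee}$ and $\alpha_1^{\vee}+2\alpha_2^{\vee}$. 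The short-coroot pairings come out to $n(K+h^{\vee}) \pm \tfrac{n_2-n_1}{2}$ and $n(K+h^{\vee}) \pm \tfrac{n_1+n_2}{2}$; since $n_1 \pm n_2$ are odd and $\gcd(u,p) = 1$ with $u$ odd, these are never integers, so the short families do not contribute to $\Delta^{\vee re}_{\Lambda}$. This establishes both~\eqref{paper:eqn:2025-1101a1} and~\eqref{paper:eqn:2025-1101a2}.

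Part~(3) is a routine combinatorial check: writing $p = u(K+3)$, each of the three substitutions $n_i \mapsto 2p - n_i$ sends $\{1, 2, \ldots, 2p-1\}$ into itself and preserves the parity of $n_1-n_2$ (since $2p$ is even), so part~(2) places each transformed weight back in $P^+_K(S^{\vee}_{(u)})$. The main obstacle in the whole argument is the coroot-family bookkeeping in the converse direction of~(2): one must carefully rule out unwanted integers appearing in the short-coroot pairings, which is precisely where the hypotheses $u \in \nnn_{\rm odd}$ and $\gcd(u,p)=1$ are used; everything else is immediate from Lemma~\ref{paper:lemma:2025-1031a}.
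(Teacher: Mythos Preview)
Your proposal is correct and follows the same approach as the paper, which simply states that the proposition is obtained ``by Lemma~\ref{paper:lemma:2025-1031a}'' without further detail. Your explicit verification of the converse direction of~(2)---particularly the check that the short-coroot pairings $n(K+h^{\vee}) \pm \tfrac{n_1\pm n_2}{2}$ are never integers when $u$ is odd and $\gcd(u,p)=1$---fills in exactly what the paper leaves implicit, and your treatment of~(3) as a parity-preserving involution on the index set is the natural argument.
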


\medskip

We note that, in particular when $u=1$, 
\begin{equation}
P_K^+(S^{\vee}_{(u=1)}) 
\, \ne \, \emptyset
\quad \Longleftrightarrow \quad 
K \in \zzz_{\geq -1}
\label{paper:eqn:2025-1101h}
\end{equation}
Hereandafter in this paper, we consider the case $u=1$, 
namely admissible weights of type $S^{\vee}_{(u=1)}$ 
and compute the character of these modules and 
quantum Hamiltonian reduction.

\subsection{Calculation of numerators of characters of admissible 
modules in the case $u=1$}
\label{subsec:C2:another:u=1:numerator}


\begin{lemma} \,\ 
\label{paper:lemma:2025-1031b}
For $\Lambda=\Lambda^{[K]}_{n_1,n_2} \in P_K^+(S^{\vee}_{(u=1)})$, 
we define the functions $F^{[K]}_{n_1,n_2}$ and 
$A^{[K]}_{n_1,n_2}$ by
{\allowdisplaybreaks
\begin{eqnarray*}
F^{[K]}_{n_1,n_2} &:=& F_{\Lambda+\rho} 
\, := \,\ 
q^{\frac{|\overline{\Lambda+\rho}|^2}{2(K+3)}} 
\sum_{j, \, k \, \in \, \zzz}
t_{2j\alpha_1+2k(\alpha_1+\alpha_2)}(e^{\Lambda+\rho})
\\[1mm]
A^{[K]}_{n_1,n_2} &:=& 
A_{\Lambda+\rho} \, := \,\ 
q^{\frac{|\overline{\Lambda+\rho}|^2}{2(K+3)}} 
\sum_{w \in W_{(u=1)}} \hspace{-3mm}
\varepsilon(w) \, e^{w(\Lambda+\rho)}
\, = 
\sum_{\overline{w} \, \in \, 
\langle r_{\alpha_1}, \, r_{\alpha_1+\alpha_2}\rangle}
\hspace{-7mm}
\varepsilon(\overline{w}) \, 
\overline{w}(F_{\Lambda+\rho})
\end{eqnarray*}}
Then
\begin{subequations}
\begin{enumerate}
\item[{\rm 1)}] \,\ $F^{[K]}_{n_1,n_2} \, = \,\ 
e^{(K+3)\Lambda_0}
\sum\limits_{j, \, k \, \in \, \zzz}
e^{2(K+3)(j+\frac{n_1}{4(K+3)}) \alpha_1
+ 2(K+3)(k+\frac{n_2}{4(K+3)}) (\alpha_1+\alpha_2)}$
\begin{equation}
\times \,\ 
q^{2(K+3)(j+\frac{n_1}{4(K+3)})^2+2(K+3)(k+\frac{n_2}{4(K+3)})^2}
\label{paper:eqn:2025-1101k1}
\end{equation}

\item[{\rm 2)}] \,\ $A^{[K]}_{n_1,n_2} = \, 
e^{(K+3)\Lambda_0}
\sum\limits_{j, \, k \, \in \, \zzz}
\big(e^{2(K+3)(j+\frac{n_1}{4(K+3)}) \alpha_1}
- 
e^{-2(K+3)(j+\frac{n_1}{4(K+3)}) \alpha_1}\big)$
{\allowdisplaybreaks
\begin{eqnarray}
&\times & 
\big(e^{2(K+3)(k+\frac{n_2}{4(K+3)}) (\alpha_1+\alpha_2)} 
-
e^{-2(K+3)(k+\frac{n_2}{4(K+3)}) (\alpha_1+\alpha_2)}\big)
\nonumber
\\[2mm]
&\times &
q^{2(K+3)(j+\frac{n_1}{4(K+3)})^2+2(m+3)(k+\frac{n_2}{4(K+3)})^2}
\label{paper:eqn:2025-1101k2}
\end{eqnarray}}
\end{enumerate}
\end{subequations}
\end{lemma}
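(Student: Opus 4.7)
The plan is to expand $F^{[K]}_{n_1,n_2}$ by applying the translation formula directly, and then obtain $A^{[K]}_{n_1,n_2}$ by antisymmetrizing $F^{[K]}_{n_1,n_2}$ over the finite reflection group in the semidirect product decomposition of $W_{(u=1)}$.

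First I would record the relevant pairings. From the inner product matrix one reads off $(\alpha_1|\alpha_1) = (\alpha_1+\alpha_2|\alpha_1+\alpha_2) = 1$ and $(\alpha_1|\alpha_1+\alpha_2) = 0$; thus $\alpha_1$ and $\alpha_1+\alpha_2$ are orthogonal of squared-length $1$ in $\overline{\hhh}$, and the reflections $r_{\alpha_1}, r_{\alpha_1+\alpha_2}$ commute, each negating one of these vectors while fixing the other. Combined with \eqref{paper:eqn:2025-1101f2}, this gives $(\Lambda+\rho|\delta) = K+3$, $(\Lambda+\rho|\alpha_1) = n_1/2$, $(\Lambda+\rho|\alpha_1+\alpha_2) = n_2/2$, and $|\overline{\Lambda+\rho}|^2 = (n_1^2 + n_2^2)/4$.

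For part 1, I would substitute $\alpha = 2j\alpha_1 + 2k(\alpha_1+\alpha_2)$ into the formula for $t_\alpha$ recalled just above the lemma: orthogonality gives $|\alpha|^2 = 4(j^2+k^2)$ and $(\Lambda+\rho|\alpha) = jn_1 + kn_2$, so the $\Lambda_0$-coefficient of $t_\alpha(\Lambda+\rho)$ remains $K+3$, the $\alpha_1$- and $(\alpha_1+\alpha_2)$-coefficients become $\frac{n_1}{2} + 2j(K+3)$ and $\frac{n_2}{2} + 2k(K+3)$, and the $\delta$-coefficient picks up a correction $-\{2(K+3)(j^2+k^2) + jn_1 + kn_2\}$. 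Factoring $2(K+3)$ out of the two $\overline{\hhh}$-coefficients and combining the $\delta$-contribution with the prefactor $q^{(n_1^2+n_2^2)/(8(K+3))}$ then lets one complete the square separately in $j$ and $k$, producing \eqref{paper:eqn:2025-1101k1}.

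For part 2, I would use the decomposition $W_{(u=1)} = \langle r_{\alpha_1}, r_{\alpha_1+\alpha_2}\rangle \ltimes \{t_{2j\alpha_1 + 2k(\alpha_1+\alpha_2)}\}$ from Section~\ref{subsec:C2:admissible:weights} together with the fact that translations have sign $+1$ to rewrite $A^{[K]}_{n_1,n_2}$ as the antisymmetrization $\sum_{\overline{w}} \varepsilon(\overline{w})\, \overline{w}(F^{[K]}_{n_1,n_2})$ over the order-four abelian group $\langle r_{\alpha_1}, r_{\alpha_1+\alpha_2}\rangle$. Since each $\overline{w}$ fixes $\Lambda_0$ and $\delta$ (so the $q$-exponent obtained in part 1 is invariant) and acts by an independent sign on the $\alpha_1$- and $(\alpha_1+\alpha_2)$-components, the alternating sum factors into the stated product of two differences. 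The main obstacle is purely bookkeeping: verifying that the cross-term $2jk(\alpha_1|\alpha_1+\alpha_2)$ vanishes in the square-completion (it does, by orthogonality) and tracking the signs $\varepsilon(r_{\alpha_1}) = \varepsilon(r_{\alpha_1+\alpha_2}) = -1$ in the antisymmetrization; no nontrivial identities beyond the definition of $t_\alpha$ are required.
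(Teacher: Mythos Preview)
Your proposal is correct and follows essentially the same approach as the paper: both arguments apply the translation formula for $t_\alpha$ to $\Lambda+\rho$ (the paper does this componentwise on $\Lambda_0$, $\alpha_1$, $\alpha_1+\alpha_2$, while you apply it to the full weight at once), use the orthogonality $(\alpha_1|\alpha_1+\alpha_2)=0$ and $|\overline{\Lambda+\rho}|^2=(n_1^2+n_2^2)/4$ to complete the square, and then derive part~2) from part~1) by antisymmetrizing over the commuting reflections $r_{\alpha_1}, r_{\alpha_1+\alpha_2}$.
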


\begin{proof} 1) is obtained by easy calculation noticing
$\big|\overline{\Lambda^{[K]}_{n_1,n_2}+\rho}\big|^2 \, = \,\ 
\frac14(n_1^2+n_2^2)$ and using
$$
\begin{array}{lcl}
t_{2j\alpha_1+2k(\alpha_1+\alpha_2)}(\Lambda_0) &=& 
\Lambda_0
\, + \, 2j\alpha_1
\, + \, 2k(\alpha_1+\alpha_2)
\, - \, 2(j^2+k^2) \, \delta
\\[2mm]
t_{2j\alpha_1+2k(\alpha_1+\alpha_2)}(\alpha_1) &=&
\alpha_1-2j \, \delta
\\[2mm]
t_{2j\alpha_1+2k(\alpha_1+\alpha_2)}(\alpha_1+\alpha_2)
&=&
\alpha_1+\alpha_2-2k \, \delta \, .
\end{array}
$$
2) follows immediately from 1).
\end{proof}

\medskip

For our further calculation, we introduce the coordinates in $\hhh$ as follows:
\begin{subequations}
\begin{equation}
h \, = \, (\tau, z_1,z_2,t) \,\ = \,\ 2\pi i \, \big\{
-\tau \, \Lambda_0 \, + \, 
z_1 \, \alpha_1 \, + \, z_2 \, (\alpha_1+\alpha_2)
\, + \, t \, \delta\big\}
\label{paper:eqn:2025-1101h1}
\end{equation}
then,
\begin{equation} \hspace{-10mm}
\left\{
\begin{array}{lcl}
e^{\alpha_1(h)} &=& e^{2\pi iz_1} \\[0.5mm]
e^{(\alpha_1+\alpha_2)(h)} &=& e^{2\pi iz_2}
\end{array}\right. \hspace{10mm}
\left\{
\begin{array}{lcl}
e^{\alpha_2(h)} &=& e^{2\pi i(-z_1+z_2)} \\[0.5mm]
e^{(2\alpha_1+\alpha_2)(h)} &=& e^{2\pi i(z_1+z_2)}
\end{array}\right.
\label{paper:eqn:2025-1101h2}
\end{equation}
\end{subequations}
Then the formula \eqref{paper:eqn:2025-1101k2} is written 
in terms of these coordinates and by using the Jacobi's theta 
functions as follows:
\begin{equation}
A_{n_1,n_2}^{\, [K]}(\tau, z_1, z_2,t)
= 
e^{2\pi i(K+3)t} 
\big[\theta_{n_1,2(K+3)}-\theta_{-n_1,2(K+3)}\big](\tau,z_1) \cdot
\big[\theta_{n_2,2(K+3)}-\theta_{-n_2,2(K+3)}\big](\tau,z_2)
\label{paper:eqn:2025-1031j}
\end{equation}

\section{The congruence subgroup $\Gamma_0(2)$ of $SL_2(\zzz)$}
\label{sec:sl(2Z):congruence-subgroup}

\subsection{Action of $\Gamma_0(2)$ on theta functions}
\label{subsec:sl(2Z):congruence-subgroup}

Let us recall the congruence subgroup $\Gamma_0(2)$ of 
$SL_2(\zzz)$ :
$$
\Gamma_0(2) \, := \,\ \Bigg\{
\begin{pmatrix}
a & b \\[0mm]
c & d
\end{pmatrix} \, \in \, SL_2(\zzz) 
\quad ; \quad c \, \equiv \, 0 \,\ {\rm mod} \, 2 \Bigg\}
$$
It is known that the group $\Gamma_0(2)$ is 
generated by 
$T= \begin{pmatrix}
1 & 1 \\[0mm]
0 & 1
\end{pmatrix}$, 
$ST^2S = \begin{pmatrix}
-1 & 0 \\[0mm]
2 & -1
\end{pmatrix}$ and $-I$, where $S= \begin{pmatrix}
0 & -1 \\[0mm]
1 & 0
\end{pmatrix}$.

\medskip

Following the Kac's book \cite{K1} \S 13.4, define the action 
of $SL_2(\zzz)$ on functions $F(\tau,z,t)$, where
$(\tau,z,t) \in \ccc_+ \times V \times \ccc$ and 
$V$ is an $\ell$-dimensional $\ccc$-linear space with 
inner product $( \cdot | \cdot )$, by
\begin{equation}
F|_A(\tau,z,t) := 
(c\tau+d)^{-\frac{\ell}{2}} 
F\Big(\frac{a\tau+b}{c\tau+d}, \, \frac{z}{c\tau+d}, \, 
t-\frac{c(z|z)}{2(c\tau+d)}\Big)
\,\ {\rm where} \, 
A=
\begin{pmatrix}
a & b \\[0mm]
c & d
\end{pmatrix} \in SL_2(\zzz)
\label{paper:eqn:2025-1031a}
\end{equation}
For convenience of our calculation using this formula 
\eqref{paper:eqn:2025-1031a}, 
we define the functions $\widetilde{\eta}(\tau,t)$ and 
$\widetilde{\theta}_{j,m}(\tau,z,t)$  and 
$\widetilde{\vartheta}_{ab}(\tau,z,t)$ by adding the 
parameter \lq \lq $t$" to Dedekind's $\eta$-function and 
Jacobi's theta functions and Mumford's theta functions as follows:
\begin{equation}
\begin{array}{lcl}
\widetilde{\eta}(\tau,t) &:=& e^{\pi it} \, \eta(\tau)
\\[1mm]
\widetilde{\theta}_{j,m}(\tau,z,t) &:=& 
e^{\pi imt} \, \theta_{j,m}(\tau,z)
\,\ = \,\ 
e^{\pi imt} \, \sum\limits_{n \in \zzz}
e^{2\pi im(n+\frac{j}{2m})z}
q^{m(n+\frac{j}{2m})^2}
\\[1mm]
\widetilde{\vartheta}_{ab}(\tau,z,t) &:=& 
e^{2\pi it} \, \vartheta_{ab}(\tau,z)
\\[2mm]
& & \hspace{-25mm}
= \, 
e^{2\pi it} \, 
q^{\frac{a}{8}}
e^{-a\pi i(z+\frac{b}{2})}
\prod\limits_{n=1}^{\infty}
(1-q^n)
\big(1+(-1)^be^{2\pi iz}q^{n-\frac{a+1}{2}}\big)
\big(1+(-1)^be^{-2\pi iz}q^{n+\frac{a-1}{2}}\big)
\end{array}
\label{paper:eqn:2025-1102a}
\end{equation}
Modular transformation properties of these functions are 
obtained easily from those of $\eta(\tau)$ and 
$\theta_{j,m}(\tau,z)$ and $\vartheta_{ab}(\tau,z)$ as follows:

\medskip

\begin{note} \,\ 
\label{paper:note:2025-1101a}
\begin{enumerate}
\item[{\rm 1)}] \,\ $\widetilde{\eta}|_S(\tau,t) \, = \, 
e^{-\frac{\pi i}{4}} \, \widetilde{\eta}(\tau,t)$ 
\quad and \quad 
$\widetilde{\eta}|_T(\tau,t) \, = \, 
e^{\frac{\pi i}{12}} \, \widetilde{\eta}(\tau,t)$ 

\item[{\rm 2)}] For $m \in \nnn$ and $j \in \zzz$, the following
formulas hold:
\begin{enumerate}
\item[{\rm (i)}] \,\ $\widetilde{\theta}_{j,m}|_S(\tau,z,t) 
\, = \,\ 
\dfrac{e^{-\frac{\pi i}{4}}}{\sqrt{2m}} 
\sum\limits_{k \, \in \, \zzz/2m\zzz}
e^{-\frac{\pi ijk}{m}} \, \widetilde{\theta}_{k,m}(\tau,z,t)$

\item[{\rm (ii)}] \,\ $\widetilde{\theta}_{j,m}|_T(\tau,z,t) 
\,\ = \,\ e^{\frac{\pi ij^2}{2m}} \, \widetilde{\theta}_{j,m}(\tau,z,t)$
\end{enumerate}

\item[{\rm 3)}] For $a, \, b \in \{0,1\}$, the following
formulas hold:
\begin{enumerate}
\item[{\rm (i)}] \quad $\widetilde{\vartheta}_{ab}|_S(\tau,z,t) 
\,\ = \,\
e^{-\frac{\pi i}{4}} \, (-i)^{ab} \, \widetilde{\vartheta}_{ba}(\tau,z,t)$

\item[{\rm (ii)}] \quad $\widetilde{\vartheta}_{ab}|_T(\tau,z,t) 
\,\ = \,\
\left\{
\begin{array}{rcl}
\widetilde{\vartheta}_{a, 1-b}(\tau,z,t) & & {\rm if} \,\ a=0 \\[1mm]
e^{\frac{\pi i}{4}} \, \widetilde{\vartheta}_{ab}(\tau,z,t) 
& & {\rm if} \,\ a=1
\end{array}\right. $

{\rm i.e,}
$$
\left\{
\begin{array}{ccc}
\widetilde{\vartheta}_{00}|_T &=& \widetilde{\vartheta}_{01} \\[1mm]
\widetilde{\vartheta}_{01}|_T &=& \widetilde{\vartheta}_{00}
\end{array}\right. \hspace{10mm} \left\{
\begin{array}{ccc}
\widetilde{\vartheta}_{10}|_T &=& 
e^{\frac{\pi i}{4}} \, \widetilde{\vartheta}_{10} \\[1mm]
\widetilde{\vartheta}_{11}|_T &=& 
e^{\frac{\pi i}{4}} \, \widetilde{\vartheta}_{11}
\end{array}\right.
$$

\item[{\rm (iii)}] \quad $\widetilde{\vartheta}_{ab}|_{T^2}(\tau,z,t) 
\,\ = \,\
i^a \, \widetilde{\vartheta}_{ab}(\tau,z,t)
\,\ = \,\ 
\left\{
\begin{array}{rcl}
\widetilde{\vartheta}_{ab}(\tau,z,t) & & {\rm if} \,\ a=0 \\[1mm]
i \, \widetilde{\vartheta}_{ab}(\tau,z,t) & & {\rm if} \,\ a=1
\end{array}\right. $
\end{enumerate}
\end{enumerate}
\end{note}

\medskip

From Note \ref{paper:note:2025-1101a}, the $ST^2S$-transformation 
of these functions is easily obtained as follows:

\medskip

\begin{note} \,\ 
\label{paper:note:2025-1101b}
\begin{enumerate}
\item[{\rm 1)}] \,\ $\widetilde{\eta}|_{ST^2S}(\tau,t) 
\, = \, 
e^{-\frac{\pi i}{3}} \, \widetilde{\eta}(\tau,t)$

\item[{\rm 2)}] \,\ $\widetilde{\theta}_{j,m}|_{ST^2S}(\tau,z,t) 
= 
\dfrac{-i}{2m} \sum\limits_{k \, \in \, \zzz/2m\zzz}
\bigg(\sum\limits_{\ell \, \in \, \zzz/2m\zzz}
e^{\frac{\pi i\ell(\ell-j-k)}{m}} \bigg) \, 
\widetilde{\theta}_{k,m}(\tau,z,t)$ 
\begin{equation}
\text{for} \quad m \in \nnn \quad \text{and} \quad j \in \zzz .
\label{paper:eqn:2025-1102b}
\end{equation}

\item[{\rm 3)}] \,\ 
$\widetilde{\vartheta}_{ab}|_{ST^2S}(\tau,z,t) 
\,\ = \,\
- \, i^{\, b+1} \, (-1)^{ab} \, 
\widetilde{\vartheta}_{ab}(\tau,z,t)$

{\rm i.e,}
$$\left\{
\begin{array}{lcr}
\widetilde{\vartheta}_{00}|_{ST^2S} &=& - \, i \, 
\widetilde{\vartheta}_{00}
\\[1mm]
\widetilde{\vartheta}_{01}|_{ST^2S} &=& \widetilde{\vartheta}_{01}
\end{array}\right. \hspace{10mm} \left\{
\begin{array}{lcr}
\widetilde{\vartheta}_{10}|_{ST^2S} &=& - \, i \, 
\widetilde{\vartheta}_{10}
\\[1mm]
\widetilde{\vartheta}_{11}|_{ST^2S} &=& - \, 
\widetilde{\vartheta}_{11}
\end{array}\right. 
$$
\end{enumerate}
\end{note}

\medskip

To simplify the formula \eqref{paper:eqn:2025-1102b} in the case 
$m \in 2\nnn$, we note the following: 

\medskip

\begin{note} \,\ 
\label{paper:note:2025-1101c}
For $m \, \in \, 2 \nnn$, the following formulas hold:

\begin{enumerate}
\item[{\rm 1)}] \quad $\sum\limits_{k \, \in \zzz/2m\zzz}
e^{\frac{\pi i}{m}k(k+1)} \,\ = \,\ 0 \hspace{10mm} 
{\rm i.e,} \hspace{10mm}
\sum\limits_{k \, \in \zzz/2m\zzz}
e^{\frac{\pi i}{m}(k+\frac12)^2} \,\ = \,\ 0$ 

\item[{\rm 2)}] \quad $\sum\limits_{k \, \in \zzz/2m\zzz}
e^{\frac{\pi i}{m}k(k+n)} \,\ = \,\ \left\{
\begin{array}{ccl}
0 & & {\rm if} \quad n \, \in \, \zzz_{\rm odd} 
\\[1.5mm]
e^{-\frac{\pi i}{4m}n^2} \gamma_m
& & {\rm if} \quad n \, \in \, \zzz_{\rm even}
\end{array}\right. $

\medskip

where 
\begin{equation}
\gamma_m \, := \,\ 
\sum_{k \, \in \zzz/2m\zzz} e^{\frac{\pi i}{m}k^2}
\label{paper:eqn:2025-1102c}
\end{equation}
\end{enumerate}
\end{note}

\begin{proof} 1) \quad $\sum\limits_{k \, \in \zzz/2m\zzz}
e^{\frac{\pi i}{m}k(k+1)}
\, = \, 
\underbrace{\sum\limits_{1 \, \leq \, k \, \leq \, m}
e^{\frac{\pi i}{m}k(k+1)}}_{(A)}
\,\ + \, 
\underbrace{\sum\limits_{-m \, < \, k \, \leq \, 0}
e^{\frac{\pi i}{m}k(k+1)}}_{(B)}$

\medskip

We compute $(B)$, by putting $k=k'-m$ :
$$
(B) \, = \hspace{-3mm}
\sum_{1 \, \leq \, k' \, \leq \, m}
\overbrace{e^{\frac{\pi i}{m}(k'-m)(k'+1-m)}}^{\substack{
{\displaystyle \hspace{-10mm}
e^{\frac{\pi i}{m}[k'(k'+1)-(2k'+1)m+m^2]}} \\[-0.5mm] ||
}}
\, = \, 
\sum_{1 \, \leq \, k \, \leq \, m}
e^{\frac{\pi i}{m}k(k+1)} \, 
\underbrace{e^{\pi i(2k+1)}}_{-1} \, \underbrace{e^{\pi im}}_{1}
\,\ = \,\ (-1) \times (A)
$$
So we have \, $(A)+(B)=0$, proving 1).  \,\ 
2) follows from 1) since 
$$
\sum_{k \, \in \zzz/2m\zzz} 
e^{\frac{\pi i}{m}k(k+n)} 
\, = 
\sum_{k \, \in \zzz/2m\zzz} 
e^{\frac{\pi i}{m}(k+\frac{n}{2})^2} 
e^{-\frac{\pi in^2}{4m}}
$$

\vspace{-6mm}

\end{proof}

\medskip

From the formula \eqref{paper:eqn:2025-1102b} and Note 
\ref{paper:note:2025-1101c}, we obtain the following:

\medskip


\begin{lemma} \quad 
\label{paoer:lemma:2025-1101a}
For $m \in 2\nnn$ and $j \in \zzz$, the following formulas hold:

\begin{enumerate}
\item[{\rm 1)}] \quad $\widetilde{\theta}_{j,m}|_{ST^2S} 
\, = \, 
\dfrac{-i \, \gamma_m}{2m} \, 
\sum\limits_{\substack{k \, \in \, \zzz/2m\zzz \\[1mm]
k \, \equiv \, j \, {\rm mod} \, 2}}
e^{-\frac{\pi i}{4m}(j+k)^2} \, \widetilde{\theta}_{k,m}$

\vspace{-14mm}

\begin{equation}
\label{paper:eqn:2025-1102d1}
\end{equation}

\vspace{3mm}

\item[{\rm 2)}] 
\begin{subequations}
\begin{enumerate}
\item[{\rm (i)}] \,\ $(
\widetilde{\theta}_{j,m}-\widetilde{\theta}_{-j,m})|_{ST^2S} 
\, = \, 
\dfrac{-i \, \gamma_m}{2m} \, 
\sum\limits_{\substack{k \, \in \, \zzz/2m\zzz \\[1mm]
k \, \equiv \, j \, {\rm mod} \, 2}}
e^{-\frac{\pi i}{4m}(j+k)^2} \, 
(\widetilde{\theta}_{k,m}-\widetilde{\theta}_{-k,m})$
\begin{equation}
= \,\ \frac{i \, \gamma_m}{2m} \, 
\sum\limits_{\substack{k \, \in \, \zzz/2m\zzz \\[1mm]
k \, \equiv \, j \, {\rm mod} \, 2}}
e^{-\frac{\pi i}{4m}(j-k)^2} \, 
(\widetilde{\theta}_{k,m}-\widetilde{\theta}_{-k,m})
\label{paper:eqn:2025-1102d2}
\end{equation}

\item[{\rm (ii)}] \,\ $(
\widetilde{\theta}_{j,m}-\widetilde{\theta}_{-j,m})|_{ST^2S} 
\, = \, 
\dfrac{- \, \gamma_m}{m} \hspace{-1mm}
\sum\limits_{\substack{0 \, < \, k \, < \, m \\[1mm]
k \, \equiv \, j \, {\rm mod} \, 2}}
e^{-\frac{\pi i}{4m}(j^2+k^2)} \, \sin \dfrac{\pi jk}{2m} \,\ 
(\widetilde{\theta}_{k,m}-\widetilde{\theta}_{-k,m})$

\vspace{-14mm}

\begin{equation}
\label{paper:eqn:2025-1102d3}
\end{equation}
\end{enumerate}
\end{subequations}
\end{enumerate}
\end{lemma}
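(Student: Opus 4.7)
The plan is to derive the three identities in order, each obtained by manipulating the single formula \eqref{paper:eqn:2025-1102b} of Note \ref{paper:note:2025-1101b}. For part (1), I would observe that the inner sum $\sum_{\ell\in\zzz/2m\zzz} e^{\pi i \ell(\ell-(j+k))/m}$ in \eqref{paper:eqn:2025-1102b} is exactly the Gauss sum evaluated in Note \ref{paper:note:2025-1101c}.2 with $n = -(j+k)$; this is precisely where the hypothesis $m\in 2\nnn$ enters. The sum therefore vanishes unless $j+k$ is even, i.e. $k\equiv j \bmod 2$, in which case it equals $e^{-\pi i(j+k)^2/(4m)}\gamma_m$. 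Substituting yields (1) immediately.

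For (2)(i), I would first apply (1) with $-j$ in place of $j$; the parity condition $k\equiv -j \bmod 2$ coincides with $k\equiv j \bmod 2$, so the only change is that the Gaussian exponent becomes $(k-j)^2$. Next I would reindex the sum by $k\mapsto -k$ — legitimate since both $\zzz/2m\zzz$ and the parity constraint are preserved — converting $\widetilde{\theta}_{k,m}$ into $\widetilde{\theta}_{-k,m}$ and restoring the exponent to $(k+j)^2$. Subtracting from the formula in (1) then produces the first equality in (2)(i). The second equality is obtained by one further reindexing $k\mapsto -k$ applied to this first equality, using the antisymmetry $\widetilde{\theta}_{-k,m}-\widetilde{\theta}_{k,m} = -(\widetilde{\theta}_{k,m}-\widetilde{\theta}_{-k,m})$ to produce the required sign flip.

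For (2)(ii), I would simply average the two equivalent expressions for $(\widetilde{\theta}_{j,m}-\widetilde{\theta}_{-j,m})|_{ST^2S}$ supplied by (2)(i). Writing $e^{-\pi i(j\pm k)^2/(4m)} = e^{-\pi i(j^2+k^2)/(4m)}\,e^{\mp\pi i jk/(2m)}$, the average of the two prefactors produces $-i\sin(\pi jk/(2m))$, which combined with the overall factor of $i\gamma_m/(2m)$ gives the desired $-\gamma_m/(2m)$. Finally, the resulting summand over $k\in\zzz/2m\zzz$ is invariant under $k\mapsto -k$ (the exponential is even in $k$; both $\sin(\pi jk/(2m))$ and $\widetilde{\theta}_{k,m}-\widetilde{\theta}_{-k,m}$ are odd in $k$), while the terms $k=0$ and $k=m$ vanish (the sine vanishes at $k=0$, and $\widetilde{\theta}_{m,m}=\widetilde{\theta}_{-m,m}$ at $k=m$). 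Folding the sum onto $0<k<m$ multiplies by $2$ and gives (2)(ii).

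The whole argument is a sequence of Gauss-sum reductions and index flips; the only delicate point is bookkeeping the parity constraint $k\equiv j\bmod 2$ under each $k\mapsto -k$ substitution and keeping signs straight in the averaging step, but no deeper ideas are needed.
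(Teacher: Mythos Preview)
Your approach is correct and essentially identical to the paper's: for part 1) the paper likewise starts from \eqref{paper:eqn:2025-1102b}, identifies the inner sum as the Gauss sum of Note \ref{paper:note:2025-1101c} 2) (this is exactly where $m\in 2\nnn$ is used), and reads off the result; for part 2) the paper simply writes ``The claims in 2) follow easily from 1)'', and the index flips and averaging you describe are precisely the intended easy deduction. One cosmetic slip: in your averaging step for (2)(ii), the average of the two bracketed quantities (after extracting $e^{-\pi i(j^2+k^2)/(4m)}$) is $i\sin\frac{\pi jk}{2m}$, not $-i\sin\frac{\pi jk}{2m}$, and it combines with the overall $i\gamma_m/(2m)$ to give the correct $-\gamma_m/(2m)$; your stated final coefficient is right even though the intermediate sign narration is off by a sign.
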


\vspace{3mm}

\begin{proof} 1) \,\ By the formula \eqref{paper:eqn:2025-1102b}, 
we have
$$
\widetilde{\theta}_{j,m}|_{ST^2S} \, = \, 
\frac{-i}{2m} \, \sum_{k \, \in \, \zzz/2m\zzz}
\bigg(
\underbrace{\sum_{\ell \, \in \, \zzz/2m\zzz}
e^{\frac{\pi i\ell[\ell-(j+k)]}{m}}}_{A(k)} 
\bigg) \, \widetilde{\theta}_{k,m}
$$

\vspace{-3mm}

\noindent
where $A(k)$ becomes as follows by Note \ref{paper:note:2025-1101c}: 
$$
A(k) \,\ = \,\ \left\{
\begin{array}{lcl}
e^{-\frac{\pi i}{4m}(j+k)^2} \, \gamma_m
& & {\rm if} \quad j+k \, \in \, 2 \, \zzz 
\\[1mm]
0 & &{\rm if} \quad j+k \, \in \, \zzz_{\rm odd}
\end{array}\right.
$$
So we have
$$
\widetilde{\theta}_{j,m}|_{ST^2S} \, = \, 
\frac{-i \, \gamma_m}{2m} \, 
\sum_{\substack{k \, \in \, \zzz/2m\zzz \\[1mm]
k \, \equiv \, j \, {\rm mod} \, 2}}
e^{-\frac{\pi i}{4m}(j+k)^2} \, \widetilde{\theta}_{k,m}
$$
proving 1). \, The claims in 2) follow easily from 1).
\end{proof}

\subsection{Modular transformation of characters}
\label{paper:subsec:character:modular-transf}



\begin{prop} \,\ 
\label{paper:prop:2025-1105a}
For $\Lambda_{n_1,n_2}^{[m]} \, \in \, P^K_+(S^{\vee}_{(u=1)})$, 
the function $A_{n_1,n_2}^{\, [K]}$ and its modular transformation 
are given as follows:
\begin{enumerate}
\item[{\rm 1)}] $
\underset{\substack{\\[0.5mm] || \,\ put \\[0mm] 
{\displaystyle 
A_{n_1,n_2}^{\prime \, [K]}(\tau, z_1, z_2,t)
}}}{A_{n_1,n_2}^{\, [K]}(\tau, z_1, z_2,2t)}
= 
\big[\widetilde{\theta}_{n_1,2(K+3)}-
\widetilde{\theta}_{-n_1,2(K+3)}\big](\tau,z_1,t) \cdot
\big[
\widetilde{\theta}_{n_2,2(K+3)}-
\widetilde{\theta}_{-n_2,2(K+3)}\big](\tau,z_2,t)$

\vspace{-10mm}

\begin{equation}
\label{paper:eqn:2025-1105c1}
\end{equation}

\vspace{-2mm}

\item[{\rm 2)}]
\begin{subequations}
\begin{enumerate}
\item[{\rm (i)}]  \,\ $A_{n_1,n_2}^{\prime \, [K]}|_{ST^2S} 
(\tau, z_1, z_2,t)
\, = \,\ 
\dfrac{(\gamma_{2(K+3)})^2}{4(K+3)^2} \,\ 
e^{-\frac{\pi i}{8(K+3)}(n_1^2+n_2^2)}$
\begin{eqnarray} \hspace{-3mm}
\times 
\sum_{\substack{0 \, < \, k_1, \, k_2 \, < \, 2(K+3) \\[1mm]
k_1 \, \equiv \, n_1 \, {\rm mod} \, 2 \\[1mm]
k_2 \, \equiv \, n_2 \, {\rm mod} \, 2}} \hspace{-3mm}
e^{-\frac{\pi i}{8(K+3)}(k_1^2+k_2^2)} \, 
\Big(\sin \frac{\pi n_1k_1}{4(K+3)}\Big)
\Big(\sin \frac{\pi n_2k_2}{4(K+3)}\Big) \, 
A^{\prime \, [K]}_{k_1,k_2}(\tau, z_1,z_2,t)
\nonumber
\\[-8mm]
& &
\label{paper:eqn:2025-1105c2}
\end{eqnarray}

\vspace{2mm}

\item[{\rm (ii)}] \,\ $A_{n_1,n_2}^{\prime \, [K]}|_T 
(\tau, z_1, z_2,t)
\, = \,\ 
e^{\frac{\pi i}{4(K+3)}(n_1^2+n_2^2)} \, 
A_{n_1,n_2}^{\prime \, [K]}(\tau, z_1, z_2,t)$

\vspace{-12mm}

\begin{equation}
\label{paper:eqn:2025-1105c3}
\end{equation}
\end{enumerate}
\end{subequations}
\end{enumerate}
\end{prop}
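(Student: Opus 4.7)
The plan is to reduce everything to the one--variable formulas of Section 3 by exploiting the obvious product structure of $A^{[K]}_{n_1,n_2}$. Part (1) is essentially a definitional rewriting: starting from the formula for $A^{[K]}_{n_1,n_2}(\tau,z_1,z_2,t)$ given by equation \eqref{paper:eqn:2025-1031j}, I would substitute $t \mapsto 2t$, split the prefactor as $e^{4\pi i(K+3)t}=e^{2\pi i(K+3)t}\cdot e^{2\pi i(K+3)t}$, and distribute the two copies into the two $\theta$--brackets. Since the definition \eqref{paper:eqn:2025-1102a} of $\widetilde{\theta}_{j,m}$ with $m=2(K+3)$ is precisely $\widetilde{\theta}_{j,2(K+3)}(\tau,z,t)=e^{2\pi i(K+3)t}\theta_{j,2(K+3)}(\tau,z)$, each prefactor is absorbed, producing the claimed factorisation.

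For part (2)(ii) I would invoke Note \ref{paper:note:2025-1101a} 2(ii) with $m=2(K+3)$ and $j=\pm n_i$; since $(+n_i)^2=(-n_i)^2$, both $\widetilde{\theta}_{n_i,2(K+3)}$ and $\widetilde{\theta}_{-n_i,2(K+3)}$ pick up the common phase $e^{\pi i n_i^2/(4(K+3))}$, so each difference is a $|_T$-eigenvector with that eigenvalue. Multiplying the two eigenvalues for $i=1,2$ gives the desired $e^{\pi i(n_1^2+n_2^2)/(4(K+3))}$. For this to be literally $|_T$ applied to the product, I must check that the $|_T$ action on a function of $(\tau,z_1,z_2,t)$ factors across the product: since $T$ has $c=0$ there is no $t$-shift and no automorphy factor, so this is immediate.

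Part (2)(i) is the real content. Applying Lemma \ref{paoer:lemma:2025-1101a} 2(ii) with $m=2(K+3)$ separately to each difference $(\widetilde{\theta}_{n_i,2(K+3)}-\widetilde{\theta}_{-n_i,2(K+3)})$ gives a sum over $k_i$ with $0<k_i<2(K+3)$, $k_i\equiv n_i\pmod 2$, with summand $-\frac{\gamma_{2(K+3)}}{2(K+3)}e^{-\pi i(n_i^2+k_i^2)/(8(K+3))}\sin\!\frac{\pi n_ik_i}{4(K+3)}$. Taking the product of the two resulting expressions, the two minus signs cancel, the two $\gamma_{2(K+3)}$'s combine into $\gamma_{2(K+3)}^{\,2}/4(K+3)^2$, the two $n_i^2$-terms pull out as the claimed prefactor $e^{-\pi i(n_1^2+n_2^2)/(8(K+3))}$, and the remainder matches \eqref{paper:eqn:2025-1105c2} once one identifies $A'^{[K]}_{k_1,k_2}$ with the product of the two theta-differences in $(\tau,z_1,t)$ and $(\tau,z_2,t)$ via part (1).

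The single substantive point to justify, which I expect to be the main obstacle, is that applying $|_{ST^2S}$ to $A'^{[K]}_{n_1,n_2}$ viewed via \eqref{paper:eqn:2025-1031a} as a function on $\ccc_+\times(\ccc z_1\oplus\ccc z_2)\times\ccc$ with $\ell=2$ actually equals the product of the two $|_{ST^2S}$-transforms performed on each one-variable factor. This reduces to two checks: the automorphy factor $(c\tau+d)^{-\ell/2}$ with $\ell=2$ splits as the product of two $(c\tau+d)^{-1/2}$, and the $t$-shift $t-\frac{c((z_1|z_1)+(z_2|z_2))}{2(c\tau+d)}$ is additive in $z_1,z_2$; since the $t$-dependence of every $\widetilde{\theta}_{j,2(K+3)}$ is the single exponential $e^{2\pi i(K+3)t}$, the additive shift translates into a product of the two individual exponential adjustments, and the orthogonal decomposition $\bar{\hhh}=\ccc\alpha_1\oplus\ccc(\alpha_1+\alpha_2)$ (orthonormal with respect to $(\cdot|\cdot)$) makes the inner product split as claimed. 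Once this factorisation lemma is in hand, parts (2)(i) and (2)(ii) follow mechanically from the one-variable identities.
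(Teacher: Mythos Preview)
Your proposal is correct and follows essentially the same route as the paper. The paper's own proof is the one-line remark that 1) comes from multiplying \eqref{paper:eqn:2025-1031j} by $e^{4\pi i(K+3)t}$ and that 2) follows ``by easy calculation using Notes \ref{paper:note:2025-1101a} and \ref{paper:note:2025-1101b}''; you have simply unpacked this, invoking Lemma \ref{paoer:lemma:2025-1101a} 2)(ii) (itself a corollary of Note \ref{paper:note:2025-1101b}) rather than Note \ref{paper:note:2025-1101b} directly, and making explicit the factorisation of the $|_{A}$-action across the product --- a point the paper leaves implicit.
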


\vspace{0mm}

\begin{proof} 1) is obtained by multiplying $e^{4\pi i(K+3)t}$ 
to both sides of \eqref{paper:eqn:2025-1031j}. \, 
The formulas in 2) are obtained by easy calculation using 
Notes \ref{paper:note:2025-1101a} and \ref{paper:note:2025-1101b}.
\end{proof}

\medskip


\begin{prop} \,\ 
\label{prop:2025-727a}
The denominator $R$ of $C_2^{(1)}$ and its modular transformation 
is given by the following formulas:

\begin{enumerate}
\item[{\rm 1)}] \,\ 

\vspace{-15mm}

\begin{subequations}
{\allowdisplaybreaks
\begin{eqnarray}
& & \hspace{-20mm}
R(\tau,z_1,z_2,t) \, = \,\ 
\frac{e^{6\pi it}}{\eta(\tau)^2} \, 
\vartheta_{11}(\tau, z_1) \, 
\vartheta_{11}(\tau, z_2) \, 
\vartheta_{11}(\tau, z_1-z_2) \, 
\vartheta_{11}(\tau, z_1+z_2)
\label{eqn:2025-727a1}
\\[2mm]
&=&
\frac{1}{\widetilde{\eta}(\tau,t)^2} \, 
\widetilde{\vartheta}_{11}(\tau, z_1,t) \, 
\widetilde{\vartheta}_{11}(\tau, z_2,t) \, 
\widetilde{\vartheta}_{11}(\tau, z_1-z_2,t) \, 
\widetilde{\vartheta}_{11}(\tau, z_1+z_2,t)
\label{eqn:2025-727a1}
\end{eqnarray}}
\end{subequations}

\vspace{-6mm}

\item[{\rm 2)}]
\begin{enumerate}
\item[{\rm (i)}] \,\ $R|_S(\tau,z_1,z_2,t) 
\hspace{5.5mm} = \,\ 
- \, i \, R(\tau,z_1,z_2,t)$

\vspace{1mm}

\item[{\rm (ii)}] \,\ $R|_T(\tau,z_1,z_2,t) 
\hspace{5.5mm} = \,\ 
e^{\frac{5\pi i}{6}} \, R(\tau,z_1,z_2,t)$

\vspace{1mm}

\item[{\rm (iii)}] \,\ $R|_{ST^2S}(\tau,z_1,z_2,t) 
\, = \,\ 
e^{\frac{2\pi i}{3}} \, R(\tau,z_1,z_2,t)$
\end{enumerate}
\end{enumerate}
\end{prop}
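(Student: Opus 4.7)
The plan is to first establish the product formula 1)(a) from the affine Weyl--Macdonald--Kac denominator identity together with Jacobi's triple product, deduce 1)(b) from the definitions \eqref{paper:eqn:2025-1102a}, and then read off the three modular transformations in 2) by applying Notes \ref{paper:note:2025-1101a} and \ref{paper:note:2025-1101b} factor by factor to the product form 1)(b).

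For 1)(a), start from $R = e^\rho \prod_{\alpha \in \Delta_+}(1-e^{-\alpha})^{\mathrm{mult}(\alpha)}$ for $\ggg(C_2^{(1)})$, in which the positive imaginary roots $n\delta$ ($n\geq 1$) have multiplicity $2$ and all positive real roots have multiplicity $1$. Evaluating at $h=(\tau,z_1,z_2,t)$ gives $q=e^{-\delta(h)}$ and, from \eqref{paper:eqn:2025-1031c}, $e^{\rho(h)}=e^{6\pi it}\cdot e^{\pi iz_1}\cdot e^{3\pi iz_2}$; the imaginary-root contribution is $\eta(\tau)^2 q^{-1/12}$, while for each $\alpha\in\overline{\Delta}_+$ Jacobi's triple product rewrites the $\alpha$-string $\prod_{n\geq 0}(1-e^{-\alpha}q^n)\prod_{n\geq 1}(1-e^{\alpha}q^n)$ as $i\,q^{-1/12}e^{-\pi iz_\alpha}\,\vartheta_{11}(\tau,z_\alpha)/\eta(\tau)$. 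Multiplying over the four finite positive roots $\alpha_1,\alpha_2,\alpha_1+\alpha_2,2\alpha_1+\alpha_2$ yields $i^4=1$, a surplus $q^{-5/12}$, and $e^{-\pi i(z_1+3z_2)}$; the $q$-surplus is exactly absorbed by the strange-formula normalization $q^{|\rho|^2/(2h^\vee)}=q^{5/12}$ built into $R$ (here $(5/2)/6=5/12=\dim\ggg_0/24$), while $e^{-\pi i(z_1+3z_2)}$ cancels the non-$t$ part of $e^{\rho(h)}$, leaving the stated formula. Then 1)(b) follows from \eqref{paper:eqn:2025-1102a} by writing $e^{6\pi it}=(e^{2\pi it})^4/(e^{\pi it})^2$ and absorbing $e^{2\pi it}$ into each of the four $\widetilde{\vartheta}_{11}$ factors and $e^{\pi it}$ into each of the two $\widetilde{\eta}$ factors of $\widetilde{\eta}^2$.

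For 2), working from 1)(b), each transformation is a direct computation by applying Notes \ref{paper:note:2025-1101a} and \ref{paper:note:2025-1101b} to every factor:
\begin{align*}
R|_S &= (e^{-\pi i/4})^{-2}(-i\,e^{-\pi i/4})^4 R = e^{\pi i/2}(-1)\,R = -i\,R,\\
R|_T &= (e^{\pi i/12})^{-2}(e^{\pi i/4})^4 R = e^{-\pi i/6}e^{\pi i}\,R = e^{5\pi i/6}\,R,\\
R|_{ST^2S} &= (e^{-\pi i/3})^{-2}(-1)^4 R = e^{2\pi i/3}\,R,
\end{align*}
and as a cross-check the third follows by composition: $R|_{ST^2S}=((R|_S)|_{T^2})|_S=(-i)^2 e^{5\pi i/3}R = e^{2\pi i/3}R$. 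The only point requiring care---not really an obstacle, but a bookkeeping consistency check---is that the one-variable $|_A$-formulas in Notes \ref{paper:note:2025-1101a}--\ref{paper:note:2025-1101b}, applied independently to each $\widetilde{\vartheta}_{11}(\tau,\zeta_i,t)$, combine correctly with the multi-variable $SL_2(\zzz)$-action \eqref{paper:eqn:2025-1031a} on $R$ with $V=\overline{\hhh}$ and $(z|z)=z_1^2+z_2^2$. For $S$ (hence also $ST^2S$) this reduces to the identity $\sum_i \zeta_i^2 = 3(z_1^2+z_2^2)$ over $\zeta_i\in\{z_1,z_2,z_1-z_2,z_1+z_2\}$, matched against the total $e^{2\pi it}$-weight $6\pi i = 4\cdot 2\pi i - 2\cdot \pi i$ of $\widetilde{\eta}^{-2}\prod\widetilde{\vartheta}_{11}$, which ensures that the $t$-shifts coming from each factor aggregate to the single $t$-shift $-(z_1^2+z_2^2)/(2\tau)$ dictated by \eqref{paper:eqn:2025-1031a}; for $T$ there is no $t$-shift at all, so the check is trivial.
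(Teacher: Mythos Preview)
Your proof is correct and follows essentially the same route as the paper: for 1) you expand the Weyl--Kac denominator product in the coordinates \eqref{paper:eqn:2025-1101h1}--\eqref{paper:eqn:2025-1101h2}, convert each real-root string to a $\vartheta_{11}$ via the Jacobi triple product, and collect the $q$- and $e^{\pi i z}$-prefactors (the paper does exactly this, only writing the per-factor identity with $-i$ rather than your $i$, which is immaterial since $(\pm i)^4=1$); for 2) you apply Notes~\ref{paper:note:2025-1101a} and~\ref{paper:note:2025-1101b} factor by factor just as the paper indicates. Your explicit $t$-variable consistency check and the composition cross-check for $ST^2S$ are welcome elaborations of what the paper leaves as ``easy calculation.''
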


\begin{proof} 1) \,\ The denominator $R$ is computed by 
the following formula:
{\allowdisplaybreaks
\begin{eqnarray*}
& & \hspace{-7mm}
R \, = \,\ 
q^{\frac{|\rho|^2}{2h^{\vee}}} \, e^{\rho}
\prod\limits_{\alpha \in \Delta_+}(1-e^{-\alpha})^{{\rm mult} (\alpha)}
\\[2mm]
&=&
q^{\frac{|\rho|^2}{2h^{\vee}}} \, 
e^{\rho} \prod_{n=1}^{\infty}\bigg[(1-e^{-n\delta})^2
\prod_{\alpha \in \overline{\Delta}_+}
(1-e^{-(n-1)\delta-\alpha})(1-e^{-n\delta+\alpha})\bigg]
\\[0mm]
&=&
q^{\frac16 \cdot \frac52} 
e^{\rho} \prod_{n=1}^{\infty}\Big[
(1-e^{-n\delta})^2
(1-q^{n-1}e^{-\alpha_1})(1-q^ne^{\alpha_1})
(1-q^{n-1}e^{-\alpha_2})(1-q^ne^{\alpha_2})
\\[0mm]
& &
\times \,\ 
(1-q^{n-1}e^{-(\alpha_1+\alpha_2)})(1-q^ne^{(\alpha_1+\alpha_2)})
(1-q^{n-1}e^{-(2\alpha_1+\alpha_2)})(1-q^ne^{(2\alpha_1+\alpha_2)})
\Big]
\end{eqnarray*}}
In terms of coordinates defined by 
\eqref{paper:eqn:2025-1101h1} and \eqref{paper:eqn:2025-1101h2},
this formula is rewritten as follows:
{\allowdisplaybreaks
\begin{eqnarray*}
& & \hspace{-8mm}
R(\tau,z_1,z_2,t) \, = \,\
q^{\frac{5}{12}} \, \times \, 
e^{6\pi it} \, e^{2\pi i(\frac12z_1+\frac32z_2)} \, 
\varphi(q)^2 
\\[1mm]
& & \hspace{-5mm}
\times \,\ \prod_{n=1}^{\infty} \Big[
(1-q^{n-1}e^{-2\pi iz_1})(1-q^{n}e^{2\pi iz_1})
(1-q^{n-1}e^{-2\pi iz_2})(1-q^{n}e^{2\pi iz_2})
\\[2mm]
& &
\times \,\ 
(1-q^{n-1}e^{-2\pi i(z_1-z_2})(1-q^{n}e^{2\pi i(-z_1+z_2)})
(1-q^{n-1}e^{-2\pi i(z_1+z_2)})(1-q^{n}e^{2\pi i(z_1+z_2)})\Big]
\end{eqnarray*}}
where \,\ $\varphi(q) \, := \prod\limits_{n=1}^{\infty}(1-q^n)$. \,\ 
Noticing, by \eqref{paper:eqn:2025-1102a}, that 
$$
\prod_{n=1}^{\infty}
(1-q^{n-1}e^{-2\pi iz}) (1-q^{n}e^{2\pi iz}) 
\, = \,\ 
-i \, e^{-2\pi it} \, e^{-\pi iz} \, 
\frac{q^{-\frac18}}{\varphi(q)} \, \widetilde{\vartheta}_{11}(\tau,z,t)
$$
the above formula is rewritten as follows: 
{\allowdisplaybreaks
\begin{eqnarray*}
& & \hspace{-8mm}
R(\tau,z_1,z_2,t) = 
e^{-2\pi it}
q^{-\frac{1}{12}}
\frac{1}{\varphi(q)^2}
\widetilde{\vartheta}_{11}(\tau, z_1,t) \, 
\widetilde{\vartheta}_{11}(\tau, z_2,t) \, 
\widetilde{\vartheta}_{11}(\tau, z_1-z_2,t) \, 
\widetilde{\vartheta}_{11}(\tau, z_1+z_2,t) 
\\[0mm]
& & = \,\ 
\frac{1}{\widetilde{\eta}(\tau,t)^2} \, 
\widetilde{\vartheta}_{11}(\tau, z_1,t) \, 
\widetilde{\vartheta}_{11}(\tau, z_2,t) \, 
\widetilde{\vartheta}_{11}(\tau, z_1-z_2,t) \, 
\widetilde{\vartheta}_{11}(\tau, z_1+z_2,t) 
\end{eqnarray*}}
proving 1). \,\ 
The formulas in 2) are obtained by easy calculation using 
Notes \ref{paper:note:2025-1101a} and \ref{paper:note:2025-1101b}.
\end{proof}

\medskip

By Propositions \ref{paper:prop:2025-1105a} and \ref{prop:2025-727a},
the character of an admissible $C_2^{(1)}$-module
$L(\Lambda^{\, [K]}_{n_1,n_2})$ and its modular transformation 
are obtained as follows:

\medskip

\begin{prop} \,\ 
\label{paper:prop:2025-1105d}
For $\Lambda_{n_1,n_2}^{[K]} \, \in \, P^K_+(S^{\vee}_{(u=1)})$, 
the character of $L(\Lambda^{\, [K]}_{n_1,n_2})$ and its 
modular transformation are given by the following formulas:
\begin{enumerate}
\item[{\rm 1)}] \,\ ${\rm ch}_{L(\Lambda^{\, [K]}_{n_1,n_2})}
(\tau, z_1, z_2,t)
\, \overset{\substack{def \\[1mm]}}{:=} \,\ 
\dfrac{A_{n_1,n_2}^{\prime \, [K]}(\tau, z_1, z_2,t)}{
R(\tau, z_1, z_2,t)}$
$$
=
\frac{\eta(\tau,t)^2 \, 
\big[\widetilde{\theta}_{n_1,2(K+3)}-
\widetilde{\theta}_{-n_1,2(K+3)}\big](\tau,z_1,t) \cdot
\big[
\widetilde{\theta}_{n_2,2(K+3)}-
\widetilde{\theta}_{-n_2,2(K+3)}\big](\tau,z_2,t)}{
\widetilde{\vartheta}_{11}(\tau,z_1,t) \, 
\widetilde{\vartheta}_{11}(\tau,z_2,t) \, 
\widetilde{\vartheta}_{11}(\tau,z_1-z_2,t) \, 
\widetilde{\vartheta}_{11}(\tau,z_1+z_2,t)}
$$

\item[{\rm 2)}] \,\ ${\rm ch}_{L(\Lambda^{\, [K]}_{n_1,n_2})}|_{ST^2S} 
(\tau, z_1, z_2,t)
\, = \,\ - \, 
\dfrac{(\gamma_{2(K+3)})^2}{4(K+3)^2} \,\ 
e^{\frac{\pi i}{3}} \, 
e^{-\frac{\pi i}{8(K+3)}(n_1^2+n_2^2)}$
$$ \hspace{-8mm}
\times 
\sum_{\substack{0 \, < \, k_1, \, k_2 \, < \, 2(K+3) \\[1mm]
k_1 \, \equiv \, n_1 \, {\rm mod} \, 2 \\[1mm]
k_2 \, \equiv \, n_2 \, {\rm mod} \, 2}} \hspace{-3mm}
e^{-\frac{\pi i}{8(K+3)}(k_1^2+k_2^2)} \, 
\Big(\sin \frac{\pi n_1k_1}{4(K+3)}\Big)
\Big(\sin \frac{\pi n_2k_2}{4(K+3)}\Big) \, 
{\rm ch}_{L(\Lambda^{\, [K]}_{k_1,k_2})}
(\tau, z_1,z_2,t)
$$

\item[{\rm 3)}] \,\ ${\rm ch}_{L(\Lambda^{\, [K]}_{n_1,n_2})}|_T 
(\tau, z_1, z_2,t)
\, = \,\ 
- \, e^{\frac{\pi i}{6}} \, 
e^{\frac{\pi i}{4(K+3)}(n_1^2+n_2^2)} \, 
{\rm ch}_{L(\Lambda^{\, [K]}_{n_1,n_2})}(\tau, z_1, z_2,t)$
\end{enumerate}
\end{prop}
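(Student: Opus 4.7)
The plan is to derive all three claims by assembling results already proved: Proposition \ref{paper:prop:2025-1105a} for the numerator $A^{\prime\,[K]}_{n_1,n_2}$ and Proposition \ref{prop:2025-727a} for the denominator $R$. The key structural observation is that the action $|_A$ defined by \eqref{paper:eqn:2025-1031a} is multiplicative on products and, since $V=\overline{\hhh}$ has a fixed dimension $\ell=2$, the weight factors $(c\tau+d)^{-\ell/2}$ appearing in $|_A$ on numerator and denominator cancel when one takes the ratio. Consequently for any $A\in SL_2(\zzz)$ one has $\mathrm{ch}_{L(\Lambda_{n_1,n_2}^{[K]})}|_A = (A^{\prime\,[K]}_{n_1,n_2}|_A)/(R|_A)$, and the whole proof reduces to phase bookkeeping.

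For part 1), I would simply substitute the product expression \eqref{paper:eqn:2025-1105c1} for $A^{\prime\,[K]}_{n_1,n_2}$ and the product expression \eqref{eqn:2025-727a1} for $R$ into the definition $\mathrm{ch}=A^{\prime\,[K]}_{n_1,n_2}/R$, and then multiply numerator and denominator by $\widetilde{\eta}(\tau,t)^{2}$ to match the stated right-hand side.

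For parts 2) and 3), I would write
\[
\mathrm{ch}_{L(\Lambda_{n_1,n_2}^{[K]})}|_{ST^2S}=\frac{A^{\prime\,[K]}_{n_1,n_2}|_{ST^2S}}{R|_{ST^2S}},\qquad \mathrm{ch}_{L(\Lambda_{n_1,n_2}^{[K]})}|_{T}=\frac{A^{\prime\,[K]}_{n_1,n_2}|_{T}}{R|_{T}},
\]
and plug in the already computed transformations. For the $ST^2S$ case, Proposition \ref{paper:prop:2025-1105a}(2)(i) supplies the $(\gamma_{2(K+3)})^2/(4(K+3)^2)$ prefactor, the phase $e^{-\pi i(n_1^2+n_2^2)/(8(K+3))}$, and the sum over admissible $(k_1,k_2)$ producing the $A^{\prime\,[K]}_{k_1,k_2}$; dividing by $R|_{ST^2S}=e^{2\pi i/3}R$ reinstates $R$ in each summand to give $\mathrm{ch}_{L(\Lambda^{[K]}_{k_1,k_2})}$. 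The overall scalar becomes $e^{-2\pi i/3}$, which equals $-e^{\pi i/3}$ and matches the stated sign. For the $T$ case, Proposition \ref{paper:prop:2025-1105a}(2)(ii) contributes $e^{\pi i(n_1^2+n_2^2)/(4(K+3))}$ and Proposition \ref{prop:2025-727a}(2)(ii) contributes $e^{5\pi i/6}$ in the denominator, giving the scalar $e^{-5\pi i/6}=-e^{\pi i/6}$.

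The proof is therefore almost formal once both component transformations are known; the main (minor) obstacle is the phase arithmetic, i.e.\ verifying the identities $e^{-2\pi i/3}=-e^{\pi i/3}$ and $e^{-5\pi i/6}=-e^{\pi i/6}$ so that the signs are displayed in the claimed normalisation. No nontrivial new modular input is needed beyond what the previous two propositions provide.
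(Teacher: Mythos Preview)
Your proposal is correct and follows exactly the paper's approach: the paper simply states that the proposition is obtained ``By Propositions \ref{paper:prop:2025-1105a} and \ref{prop:2025-727a}'', i.e.\ by dividing the known transformation law of the numerator $A^{\prime\,[K]}_{n_1,n_2}$ by that of the denominator $R$, which is precisely what you do with the added (and correct) phase bookkeeping $e^{-2\pi i/3}=-e^{\pi i/3}$ and $e^{-5\pi i/6}=-e^{\pi i/6}$.
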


\section{Characters of quantum Hamiltonian reduction 
$\sim$ in general case}
\label{sec:QHR:characters}


Before going to the quantum Hamiltonian reduction (QHR) of an 
admissible $C_2^{(1)}$-module $L(\Lambda^{\, [K]}_{n_1,n_2})$, 
we recall the formulas for the character of QHR adding the 
parameter $t$ in general case of affine Lie algebras.

\medskip

Let $\ggg$ be an affine Lie algebra and $\overline{\ggg}$ be 
its underlying finite-dimensional simple Lie algebra of rank $\ell$
and $(x,e,f)$ be an $sl_2$-triple in $\overline{\ggg}$.
In this setting, we define the denominators 
$\overset{w}{R}{}^{(\pm)}(\tau,H,t)$ and 
$\overset{w}{R}{}^{(\ast)}(\tau,H,t)$ of the 
W-algebra $W(\ggg, f)$ by the following formulas: 
\begin{subequations}
{\allowdisplaybreaks
\begin{eqnarray}
& & \hspace{-10mm}
\overset{w}{R}{}^{(+)}(\tau,H,t)
:= \,\  
e^{\frac{\pi it}{2}(\ell+{\rm dim} \overline{\ggg}^f)}
\eta(\tau)^{\frac32\ell-\frac12{\rm dim} \overline{\ggg}^f}
\prod_{\alpha \in \overline{\Delta}_0^+}
\vartheta_{11}(\tau, \alpha(H))
\bigg[\prod_{\alpha \in \overline{\Delta}_{\frac12}}
\vartheta_{01}(\tau, \alpha(H))\bigg]^{\frac12}
\nonumber
\\[2mm]
& &
= \,\ 
\widetilde{\eta}(\tau,t)^{\frac32\ell-\frac12{\rm dim} \overline{\ggg}^f}
\prod_{\alpha \in \overline{\Delta}_0^+}
\widetilde{\vartheta}_{11}(\tau, \alpha(H),t)
\bigg[\prod_{\alpha \in \overline{\Delta}_{\frac12}}
\widetilde{\vartheta}_{01}(\tau, \alpha(H),t)\bigg]^{\frac12}
\label{paper:eqn:2025-1105a1}
\\[2mm]
& & \hspace{-10mm}
\overset{w}{R}{}^{(-)}(\tau,H,t)
:= \,\  
e^{\frac{\pi it}{2}(\ell+{\rm dim} \overline{\ggg}^f)}
\eta(\tau)^{\frac32\ell-\frac12{\rm dim} \overline{\ggg}^f}
\prod_{\alpha \in \overline{\Delta}_0^+}
\vartheta_{11}(\tau, \alpha(H))
\bigg[\prod_{\alpha \in \overline{\Delta}_{\frac12}}
\vartheta_{00}(\tau, \alpha(H))\bigg]^{\frac12}
\nonumber
\\[2mm]
& &
= \,\ 
\widetilde{\eta}(\tau,t)^{\frac32\ell-\frac12{\rm dim} \overline{\ggg}^f}
\prod_{\alpha \in \overline{\Delta}_0^+}
\widetilde{\vartheta}_{11}(\tau, \alpha(H),t)
\bigg[\prod_{\alpha \in \overline{\Delta}_{\frac12}}
\widetilde{\vartheta}_{00}(\tau, \alpha(H),t)\bigg]^{\frac12}
\label{paper:eqn:2025-1105a2}
\\[2mm]
& & \hspace{-10mm}
\overset{w}{R}{}^{(\ast)}(\tau,H,t)
:= \,\  
e^{\frac{\pi it}{2}(\ell+{\rm dim} \overline{\ggg}^f)}
\eta(\tau)^{\frac32\ell-\frac12{\rm dim} \overline{\ggg}^f}
\prod_{\alpha \in \overline{\Delta}_0^+}
\vartheta_{11}(\tau, \alpha(H))
\bigg[\prod_{\alpha \in \overline{\Delta}_{\frac12}}
\vartheta_{10}(\tau, \alpha(H))\bigg]^{\frac12}
\nonumber
\\[2mm]
& &
= \,\ 
\widetilde{\eta}(\tau,t)^{\frac32\ell-\frac12{\rm dim} \overline{\ggg}^f}
\prod_{\alpha \in \overline{\Delta}_0^+}
\widetilde{\vartheta}_{11}(\tau, \alpha(H),t)
\bigg[\prod_{\alpha \in \overline{\Delta}_{\frac12}}
\widetilde{\vartheta}_{10}(\tau, \alpha(H),t)\bigg]^{\frac12}
\label{paper:eqn:2025-1105a3}
\end{eqnarray}}
\end{subequations}
where \,\ $H \in \overline{\hhh}^f$ \, and \, 
$\overline{\Delta}_j := \, 
\{\alpha \in \overline{\Delta} \,\ ; \,\ \alpha(x)=j\}$ 
\, and \, 
$\overline{\Delta}_0^+ := \, 
\overline{\Delta}_0 \cap \overline{\Delta}^+$.

\medskip

Modular transformation of these functions are obtained 
by easy calculation using Notes \ref{paper:note:2025-1101a} 
and \ref{paper:note:2025-1101b} as follows:

\medskip

\begin{lemma} \,\ 
\label{paper:lemma:2025-1105a}
\begin{enumerate}
\item[{\rm 1)}]
\begin{enumerate}
\item[{\rm (i)}] \,\ $\overset{w}{R}{}^{(+)}|_S(\tau,H,t)
\, = \,\ 
e^{-\frac{\pi i}{4} \, {\rm dim} \, \overline{\ggg}_0} \,\ 
\overset{w}{R}{}^{(\ast)}(\tau,H,t)$

\item[{\rm (ii)}] \,\ $\overset{w}{R}{}^{(-)}|_S(\tau,H,t)
\, = \,\ 
e^{-\frac{\pi i}{4} \, {\rm dim} \, \overline{\ggg}_0} \,\ 
\overset{w}{R}{}^{(-)}(\tau,H,t)$

\item[{\rm (iii)}] \,\ $\overset{w}{R}{}^{(\ast)}|_S(\tau,H,t)
\, = \,\ 
e^{-\frac{\pi i}{4} \, {\rm dim} \, \overline{\ggg}_0} \,\ 
\overset{w}{R}{}^{(+)}(\tau,H,t)$
\end{enumerate}

\item[{\rm 2)}]
\begin{enumerate}
\item[{\rm (i)}] \,\ $\overset{w}{R}{}^{(+)}|_T(\tau,H,t)
\, = \, 
e^{\frac{\pi i}{24}(2{\rm dim} \overline{\ggg}_0
-{\rm dim} \overline{\ggg}_{\frac12})} \, 
\overset{w}{R}{}^{(-)}(\tau,H,t)
\, = \, 
e^{\frac{\pi i}{24}(3{\rm dim} \overline{\ggg}_0
-{\rm dim} \overline{\ggg}^f)} \, 
\overset{w}{R}{}^{(-)}(\tau,H,t)$

\item[{\rm (ii)}] \,\ $\overset{w}{R}{}^{(-)}|_T(\tau,H,t)
\, = \, 
e^{\frac{\pi i}{24}(2{\rm dim} \overline{\ggg}_0
-{\rm dim} \overline{\ggg}_{\frac12})} \, 
\overset{w}{R}{}^{(+)}(\tau,H,t)
\, = \, 
e^{\frac{\pi i}{24}(3{\rm dim} \overline{\ggg}_0
-{\rm dim} \overline{\ggg}^f)} \, 
\overset{w}{R}{}^{(+)}(\tau,H,t)$

\item[{\rm (iii)}] \,\ $\overset{w}{R}{}^{(\ast)}|_T(\tau,H,t)
\, = \,\ 
e^{\frac{\pi i}{12} {\rm dim} \overline{\ggg}^f} \, 
\overset{w}{R}{}^{(\ast)}(\tau,H,t)$
\end{enumerate}

\item[{\rm 3)}]
\begin{enumerate}
\item[{\rm (i)}] \,\ $\overset{w}{R}{}^{(+)}|_{ST^2S}(\tau,H,t)
\,\ = \,\ 
e^{\frac{\pi i}{6}({\rm dim} \overline{\ggg}^f
-3 \, {\rm dim} \overline{\ggg}_0)} \, 
\overset{w}{R}{}^{(+)}(\tau, H,t)$

\item[{\rm (ii)}] \,\ $\overset{w}{R}{}^{(-)}|_{ST^2S}(\tau,H,t)
\,\ = \,\ 
e^{-\frac{\pi i}{12}({\rm dim} \overline{\ggg}^f
+3 \, {\rm dim} \overline{\ggg}_0)} \, 
\overset{w}{R}{}^{(-)}(\tau,H,t)$
\end{enumerate}
\end{enumerate}
\end{lemma}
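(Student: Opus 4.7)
The plan is to verify each of the nine modular transformation formulas by direct factorwise substitution into the defining products \eqref{paper:eqn:2025-1105a1}--\eqref{paper:eqn:2025-1105a3}, applying the transformation rules of Note \ref{paper:note:2025-1101a} for $S$ and $T$ and of Note \ref{paper:note:2025-1101b} for $ST^2S$, and then collecting the resulting constant phases.

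The first step is to notice how the three denominators get permuted. Under $S$, the rule $\widetilde{\vartheta}_{ab}|_S = e^{-\pi i/4}(-i)^{ab}\widetilde{\vartheta}_{ba}$ gives $\widetilde{\vartheta}_{01}\leftrightarrow\widetilde{\vartheta}_{10}$ and $\widetilde{\vartheta}_{00}\to\widetilde{\vartheta}_{00}$, forcing $\overset{w}{R}{}^{(+)}\leftrightarrow\overset{w}{R}{}^{(\ast)}$ and $\overset{w}{R}{}^{(-)}\to\overset{w}{R}{}^{(-)}$ as in 1(i)--(iii). Under $T$, the rules $\widetilde{\vartheta}_{00}\leftrightarrow\widetilde{\vartheta}_{01}$ and $\widetilde{\vartheta}_{10}\to e^{\pi i/4}\widetilde{\vartheta}_{10}$ force $\overset{w}{R}{}^{(+)}\leftrightarrow\overset{w}{R}{}^{(-)}$ and $\overset{w}{R}{}^{(\ast)}$ stable, as in 2(i)--(iii). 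Under $ST^2S$ every $\widetilde{\vartheta}_{ab}$ is a scalar multiple of itself, so each denominator is a scalar multiple of itself, as in 3(i)--(ii). This identifies the right-hand sides instantly and reduces the remaining task to determining the scalar prefactors.

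For the scalars I would collect the exponent contributions from $(3\ell-\dim\overline{\ggg}^f)/2$ copies of $\widetilde{\eta}$, from $N_0:=|\overline{\Delta}_0^+|$ copies of $\widetilde{\vartheta}_{11}$, and from $N_{1/2}/2$ (with $N_{1/2}:=|\overline{\Delta}_{\frac12}|$) copies of the relevant $\widetilde{\vartheta}_{ab}$ with $(a,b)\ne(1,1)$. The raw phase always comes out as an explicit linear combination of $\ell$, $N_0$, $N_{1/2}$, and $\dim\overline{\ggg}^f$. I would then invoke the elementary identities $\dim\overline{\ggg}_0=\ell+2N_0$ and $\dim\overline{\ggg}_{\frac12}=N_{1/2}$, together with the $sl_2$-structural identity $\dim\overline{\ggg}^f=\dim\overline{\ggg}_0+\dim\overline{\ggg}_{\frac12}$, obtained by decomposing the adjoint action of the $sl_2$-triple into irreducibles and counting irreducible components either as lowest-weight vectors (giving $\dim\overline{\ggg}^f$) or as highest-weight vectors of $x$-weight in $\frac12\zzz_{\geq 0}$ (giving $\dim\overline{\ggg}_0+\dim\overline{\ggg}_{\frac12}$). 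For example, in 1(i) the raw phase $-\frac{\pi i}{8}\bigl[3\ell-\dim\overline{\ggg}^f+6N_0+N_{1/2}\bigr]$ collapses precisely to $-\frac{\pi i}{4}\dim\overline{\ggg}_0$; the same identity is what makes the two equivalent expressions in 2(i), 2(ii) consistent, and what yields $\frac{\pi i}{12}\dim\overline{\ggg}^f$ in 2(iii).

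The main obstacle is purely bookkeeping: carefully tracking the many half-integer exponents and the phases $e^{-\pi i/4}$, $-i$, $e^{\pi i/12}$, $e^{\pi i/4}$, $e^{-\pi i/3}$ that appear, and organizing them so that the raw linear combinations of $\ell$, $N_0$, $N_{1/2}$, $\dim\overline{\ggg}^f$ rewrite cleanly into the advertised form. No deeper input is needed beyond the structural identity $\dim\overline{\ggg}^f=\dim\overline{\ggg}_0+\dim\overline{\ggg}_{\frac12}$, which is the single nontrivial ingredient that binds everything together.
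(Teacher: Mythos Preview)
Your proposal is correct and follows exactly the approach the paper indicates: it merely says these formulas are ``obtained by easy calculation using Notes \ref{paper:note:2025-1101a} and \ref{paper:note:2025-1101b}'', and you have supplied precisely that calculation, including the structural identity $\dim\overline{\ggg}^f=\dim\overline{\ggg}_0+\dim\overline{\ggg}_{\frac12}$ needed to reconcile the two forms of the exponent in 2(i) and 2(ii). If anything, your write-up is more detailed than the paper's own treatment.
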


\medskip %

For an admissible $\ggg$-module $L(\Lambda)$, we define the 
characters $\overset{w}{\rm ch}{}^{(\pm)}_{H(\Lambda)}$ and 
$\overset{w}{\rm ch}{}^{(\ast)}_{H(\Lambda)}$ of the 
quantum Hamiltonian reduction by the following formulas:
\begin{subequations}
{\allowdisplaybreaks
\begin{eqnarray}
\big[\overset{w}{R}{}^{(+)} \cdot 
\overset{w}{\rm ch}{}^{(+)}_{H(\Lambda)}\big](\tau,H,t)
&:=& 
A_{\Lambda+\rho}\Big(\tau, \, H-\tau x, \, 2t+\frac{\tau}{2}|x|^2\Big)
\label{paper:eqn:2025-1105b1}
\\[2mm]
\big[\overset{w}{R}{}^{(-)} \cdot 
\overset{w}{\rm ch}{}^{(-)}_{H_f(\Lambda)}\big](\tau,H,t)
&:=& 
A_{\Lambda+\rho}\Big(\tau, \, H-\tau x+x, \, 2t+\frac{\tau}{2}|x|^2\Big)
\nonumber
\\[2mm]
&=&
e^{4\pi i(\Lambda+\rho|x)} \, 
A_{\Lambda+\rho}\Big(\tau, \, H-\tau x-x, \, 2t+\frac{\tau}{2}|x|^2\Big)
\label{paper:eqn:2025-1105b2}
\\[2mm]
\big[\overset{w}{R}{}^{(\ast)} \cdot 
\overset{w}{\rm ch}{}^{(\ast)}_{H(\Lambda)}\big](\tau, \, H, \, t)
&:=& 
A_{\Lambda+\rho}(\tau, \, H, \, 2t)
\label{paper:eqn:2025-1105b3}
\end{eqnarray}}
\end{subequations}
where \,\ $H \in \overline{\hhh}^f$ and $
A_{\Lambda+\rho} \, := \, 
q^{\frac{|\overline{\Lambda+\rho}|^2}{2(K+h^{\vee})}}
\sum\limits_{w \in W_{\Lambda}}\varepsilon(w) e^{w(\Lambda+\rho)}$
and $W_{\Lambda}$ is the Weyl group generated by reflections 
$\{r_{\alpha} \,\ ; \,\ \alpha \in \Delta^{\vee re}_{\Lambda}\}$.

\section{Quantum Hamiltonian reduction of admissible 
$C_2^{(1)}$-modules in the case $u=1$ w.r.to the minimal 
nilpotent orbit}
\label{sec:QHR:minimal}

\subsection{Functions $f_{j,m}^{(\pm)}(\tau,z,t)$}
\label{subsec:functions:f}

In order to compute the modular transformation properties of 
characters of the quantum Hamiltonian reduction, 
we define the functions $f_{j,m}^{(\pm)}$, for 
$m \in 2 \nnn$ and $j \in \zzz$, by 
\begin{equation}
f_{j,m}^{(\pm)}(\tau,z,t) \, := \,\
\big[\widetilde{\theta}_{j,m} \pm \widetilde{\theta}_{j+m,m}\big](\tau,z,t)
\label{eqn:2025-721d}
\end{equation}
The modular transformation of these functions are obtained easily 
from those of $\widetilde{\theta}_{j,m}$  described in Notes 
\ref{paper:note:2025-1101a} and \ref{paper:note:2025-1101b}
as follows.

\medskip

\begin{lemma} \,\ 
\label{lemma:2025-721b} 
\begin{enumerate}
\item[{\rm 1)}]
\begin{enumerate}
\item[{\rm (i)}] \quad $f_{j,m}^{(\pm)}|_{ST^2S} 
\,\ = \,\ 
\dfrac{-i \, \gamma_m}{2m} \, 
\sum\limits_{\substack{k \, \in \, \zzz/2m\zzz \\[1mm]
k \, \equiv \, j \, {\rm mod} \, 2}}
e^{-\frac{\pi i}{4m}(j+k)^2} \, f_{k,m}^{(\pm)}$

\vspace{1mm}

\item[{\rm (ii)}] \quad $f_{j,m}^{(\pm)}|_{T} 
\,\ = \,\ e^{\frac{\pi ij^2}{2m}} \, \times \, \left\{
\begin{array}{ccl}
f_{j,m}^{(\pm)} & & {\rm if} \quad j+\frac{m}{2} \, = \, {\rm even}
\\[1.5mm]
f_{j,m}^{(\mp)} & & {\rm if} \quad j+\frac{m}{2} \, = \, {\rm odd}
\end{array}\right. $
\end{enumerate}

\vspace{1mm}

\item[{\rm 2)}]
\begin{subequations}
\begin{enumerate}
\item[{\rm (i)}] \quad $
(f_{j,m}^{(\pm)} \, + \, f_{-j,m}^{(\pm)})|_{ST^2S} 
\,\ = \,\ 
\dfrac{-i \, \gamma_m}{2m} \, 
\sum\limits_{\substack{k \, \in \, \zzz/2m\zzz \\[1mm]
k \, \equiv \, j \, {\rm mod} \, 2}}
e^{-\frac{\pi i}{4m}(j+k)^2} \, 
(f_{k,m}^{(\pm)} \, + \, f_{-k,m}^{(\pm)})$

\vspace{-14mm}

\begin{equation}
\label{paper:eqn:2025-1105e1}
\end{equation}

\vspace{4mm}

\item[{\rm (ii)}] \quad $
(f_{j,m}^{(\pm)} \, - \, f_{-j,m}^{(\pm)})|_{ST^2S} 
\,\ = \,\ 
\dfrac{-i \, \gamma_m}{2m} \, 
\sum\limits_{\substack{k \, \in \, \zzz/2m\zzz \\[1mm]
k \, \equiv \, j \, {\rm mod} \, 2}}
e^{-\frac{\pi i}{4m}(j+k)^2} \, 
(f_{k,m}^{(\pm)} \, - \, f_{-k,m}^{(\pm)})$

\vspace{-14mm}

\begin{equation}
\label{paper:eqn:2025-1105e2}
\end{equation}
\end{enumerate}
\end{subequations}

\vspace{4mm}

\item[{\rm 3)}]
\begin{subequations}
\begin{enumerate}
\item[{\rm (i)}] \quad $
(f_{j,m}^{(\pm)} \, + \, f_{-j,m}^{(\pm)})|_{T} 
\,\ = \,\ e^{\frac{\pi ij^2}{2m}} \, \times \, \left\{
\begin{array}{ccl}
f_{j,m}^{(\pm)} \, + \, f_{-j,m}^{(\pm)}
& & {\rm if} \quad j+\frac{m}{2} \, = \, {\rm even}
\\[2mm]
f_{j,m}^{(\mp)} \, + \, f_{-j,m}^{(\mp)} 
& & {\rm if} \quad j+\frac{m}{2} \, = \, {\rm odd}
\end{array}\right. $

\vspace{-11.5mm}

\begin{equation}
\label{paper:eqn:2025-1105f1}
\end{equation}

\vspace{4mm}

\item[{\rm (ii)}] \quad $
(f_{j,m}^{(\pm)} \, - \, f_{-j,m}^{(\pm)})|_{T} 
\,\ = \,\ e^{\frac{\pi ij^2}{2m}} \, \times \, \left\{
\begin{array}{ccl}
f_{j,m}^{(\pm)} \, - \, f_{-j,m}^{(\pm)}
& & {\rm if} \quad j+\frac{m}{2} \, = \, {\rm even}
\\[2mm]
f_{j,m}^{(\mp)} \, - \, f_{-j,m}^{(\mp)} 
& & {\rm if} \quad j+\frac{m}{2} \, = \, {\rm odd}
\end{array}\right. $

\vspace{-11.5mm}

\begin{equation}
\label{paper:eqn:2025-1105f2}
\end{equation}
\end{enumerate}
\end{subequations}
\end{enumerate}
\end{lemma}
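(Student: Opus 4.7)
The plan is to derive everything by applying the already-stated formulas for $\widetilde{\theta}_{j,m}$ (Note \ref{paper:note:2025-1101a}, Lemma \ref{paoer:lemma:2025-1101a}) linearly to the two-term combinations defining $f^{(\pm)}_{j,m}$, and then exploit a handful of parity identities to match the indices.

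For part 1)(i), I would write
$f^{(\pm)}_{j,m}\big|_{ST^2S} = \widetilde{\theta}_{j,m}\big|_{ST^2S} \pm \widetilde{\theta}_{j+m,m}\big|_{ST^2S}$
and invoke Lemma \ref{paoer:lemma:2025-1101a} 1) on each term. Both sums then range over $k\in\zzz/2m\zzz$ with $k\equiv j\bmod 2$, since $m$ is even makes $j+m\equiv j\bmod 2$. In the second sum I would reindex $k\mapsto k+m$; the reindexing produces the factor $e^{-\frac{\pi i}{4m}(j+k+2m)^2}$, which equals $e^{-\frac{\pi i}{4m}(j+k)^2}$ because $(j+k+2m)^2-(j+k)^2 = 4m(j+k+m)$ and $j+k+m$ is even (both $j+k$ and $m$ are even). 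Collecting $\widetilde{\theta}_{k,m}\pm \widetilde{\theta}_{k+m,m}=f^{(\pm)}_{k,m}$ gives the claim.

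For part 1)(ii), I would apply Note \ref{paper:note:2025-1101a} 2)(ii) to obtain
$f^{(\pm)}_{j,m}\big|_T = e^{\frac{\pi ij^2}{2m}}\bigl[\widetilde{\theta}_{j,m}\pm e^{\pi i(j+m/2)}\widetilde{\theta}_{j+m,m}\bigr]$,
using $(j+m)^2 = j^2+2jm+m^2$ and $e^{\pi i j \cdot 2}=1$ handled by the factor $e^{\pi i(j+m/2)}$. The sign $e^{\pi i(j+m/2)}$ equals $+1$ or $-1$ according to the parity of $j+m/2$, producing $f^{(\pm)}_{j,m}$ or $f^{(\mp)}_{j,m}$ respectively.

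Parts 2) and 3) then follow by applying part 1) separately to $f^{(\pm)}_{j,m}$ and $f^{(\pm)}_{-j,m}$ and adding/subtracting. The only small point is that for the $ST^2S$ formula the exponential phase for the $-j$ case is $e^{-\frac{\pi i}{4m}(-j+k)^2}=e^{-\frac{\pi i}{4m}(j-k)^2}$; after the change of summation variable $k\mapsto -k$ (which preserves the parity condition $k\equiv j \bmod 2$ because $-k\equiv k\bmod 2$) this becomes $e^{-\frac{\pi i}{4m}(j+k)^2}$ and is paired with $f^{(\pm)}_{-k,m}$, matching the stated form. For the $T$-transformation, the parity of $-j+m/2$ coincides with that of $j+m/2$, so the $\pm/\mp$ dichotomy is the same for both summands and the symmetrized/antisymmetrized combinations transform uniformly as asserted.

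I expect no serious obstacle: the entire argument is bookkeeping. The only mildly delicate step is verifying that the reindexing $k\mapsto k+m$ in 1)(i) leaves the Gaussian phase unchanged modulo $2\pi$; once that parity check is recorded, everything else is immediate linear algebra over $\widetilde{\theta}_{k,m}$'s.
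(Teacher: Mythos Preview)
Your proposal is correct and follows exactly the approach the paper indicates: the paper simply states that these formulas are obtained easily from the transformation properties of $\widetilde{\theta}_{j,m}$ in Notes \ref{paper:note:2025-1101a} and \ref{paper:note:2025-1101b} (together with Lemma \ref{paoer:lemma:2025-1101a}), and your bookkeeping---including the parity check that the reindexing $k\mapsto k+m$ leaves the Gaussian phase unchanged because $j+k+m$ is even---is precisely the way to fill in the omitted details.
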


\vspace{5mm}

From the definition \eqref{eqn:2025-721d}, it is very easy 
to see the following Notes:

\medskip

\begin{note} \,\
\label{note:2025-717d}
\label{note:2025-717a}
For $m \in 2\nnn$ and $j \in \zzz$, the following formulas hold:

\begin{enumerate}
\item[{\rm 1)}] \,\ $f_{j+m,m}^{(\pm)} \, = \, \pm \, f_{j,m}^{(\pm)}$

\item[{\rm 2)}] \,\ $f_{\frac{m}{2},m}^{(\pm)}
\, \mp \, f_{-\frac{m}{2},m}^{(\pm)} \, = \, 0$
\end{enumerate}
\end{note}

\medskip

\begin{note} \,\ 
\label{note:2025-717b}
Let $m \, \in \, 2 \, \nnn$. For $j \in \zzz$ such 
that \, $\frac{m}{2} \, < \, j \, \leq \, \frac{m}{2}+m$, 
we put \, $j' \, := \, j-m$. \, Then

\begin{enumerate}
\item[{\rm 1)}] \quad $-\frac{m}{2} \, < \, j' \, \leq \, \frac{m}{2}$

\vspace{1.5mm}

\item[{\rm 2)}] \quad $f^{(\pm)}_{j,m} \,\ = \,\ \pm \, f^{(\pm)}_{j',m}$

\vspace{1.5mm}

\item[{\rm 3)}] \quad $f^{(\pm)}_{j,m} \, + \, f^{(\pm)}_{-j,m}
\,\ = \,\ 
\pm \, (f^{(\pm)}_{j',m} \, + \, f^{(\pm)}_{-j',m})$

\vspace{1.5mm}

\item[{\rm 4)}] \quad $f^{(\pm)}_{j,m} \, - \, f^{(\pm)}_{-j,m}
\,\ = \,\ 
\pm \, (f^{(\pm)}_{j',m} \, - \, f^{(\pm)}_{-j',m})$
\end{enumerate}
\end{note}

\medskip

Using Note \ref{note:2025-717b}, formulas 
\eqref{paper:eqn:2025-1105e1} and \eqref{paper:eqn:2025-1105e2} 
in Lemma \ref{lemma:2025-721b} are rewritten as follows:

\medskip

\begin{lemma} \,\ 
\label{lemma:2025-721d}
For $m \in 2\nnn$ and $j \in \zzz$, the following formulas hold:
\begin{subequations}
\begin{enumerate}
\item[{\rm 1)}] $(f_{j,m}^{(\pm)} + 
f_{-j,m}^{(\pm)})|_{ST^2S} 
=
\dfrac{-i \, \gamma_m}{2m} \hspace{-2mm} 
\sum\limits_{\substack{\\[0.5mm] 
-\frac{m}{2} \, < \, k \, \leq \frac{m}{2} 
\\[1mm] k \, \equiv \, j \, {\rm mod} \, 2}} \hspace{-2mm}
e^{-\frac{\pi i}{4m}(j+k)^2} \, \big\{
1 \pm (-1)^{\frac{j+k}{2}} \, e^{-\frac{\pi im}{4}}\big\} \,
(f_{k,m}^{(\pm)} \, + \, f_{-k,m}^{(\pm)}) $

\vspace{-10.5mm}

\begin{equation}
\label{paper:eqn:2025-1105g1}
\end{equation}

\vspace{1mm}

\item[{\rm 2)}] $(f_{j,m}^{(\pm)} - 
f_{-j,m}^{(\pm)})|_{ST^2S} 
=
\dfrac{-i \, \gamma_m}{2m} \hspace{-2mm} 
\sum\limits_{\substack{\\[0.5mm] 
-\frac{m}{2} \, < \, k \, \leq \frac{m}{2} 
\\[1mm] k \, \equiv \, j \, {\rm mod} \, 2}} \hspace{-2mm}
e^{-\frac{\pi i}{4m}(j+k)^2} \, \big\{
1 \pm (-1)^{\frac{j+k}{2}} \, e^{-\frac{\pi im}{4}}\big\} \,
(f_{k,m}^{(\pm)} \, - \, f_{-k,m}^{(\pm)}) $

\vspace{-10.5mm}

\begin{equation}
\label{paper:eqn:2025-1105g2}
\end{equation}
\end{enumerate}
\end{subequations}
\end{lemma}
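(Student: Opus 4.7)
The plan is to combine the already-proved formulas \eqref{paper:eqn:2025-1105e1} and \eqref{paper:eqn:2025-1105e2} from Lemma \ref{lemma:2025-721b} with the periodicity relations $f^{(\pm)}_{j+m,m} = \pm f^{(\pm)}_{j,m}$ of Note \ref{note:2025-717a} (and the consequences packaged in Note \ref{note:2025-717b}) in order to fold the sum over the complete residue system $\zzz/2m\zzz$ down to the asymmetric fundamental domain $-m/2 < k \leq m/2$.

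Concretely, I would choose $\{k \in \zzz \,:\, -m/2 < k \leq 3m/2\}$ as a system of representatives of $\zzz/2m\zzz$ and split the right-hand side of \eqref{paper:eqn:2025-1105e1} into two pieces indexed by $-m/2 < k \leq m/2$ and by $m/2 < k \leq 3m/2$. In the second piece I substitute $k = k'+m$ with $-m/2 < k' \leq m/2$; since $m$ is even, the congruence $k \equiv j \pmod 2$ becomes $k' \equiv j \pmod 2$, so the index set is compatible. Note \ref{note:2025-717b} then gives
$f^{(\pm)}_{k'+m,m} + f^{(\pm)}_{-(k'+m),m} = \pm\bigl(f^{(\pm)}_{k',m}+f^{(\pm)}_{-k',m}\bigr)$,
where the outer sign is $+1$ for the superscript $(+)$ and $-1$ for the superscript $(-)$.

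For the exponent I would expand $(j+k'+m)^2 = (j+k')^2 + 2m(j+k') + m^2$, which yields
\[
e^{-\frac{\pi i}{4m}(j+k'+m)^2} \,=\, e^{-\frac{\pi i}{4m}(j+k')^2}\,\cdot\,(-1)^{(j+k')/2}\,\cdot\,e^{-\frac{\pi i m}{4}},
\]
the middle factor being legal because $j+k'$ is an even integer. Adding the two halves of the sum, each $k'$ in $(-m/2,m/2]$ with $k'\equiv j \pmod 2$ collects the bracket $\{1 \pm (-1)^{(j+k')/2} e^{-\pi i m/4}\}$ as coefficient of $f^{(\pm)}_{k',m}+f^{(\pm)}_{-k',m}$, which is exactly \eqref{paper:eqn:2025-1105g1}. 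The same substitution applied to \eqref{paper:eqn:2025-1105e2} produces \eqref{paper:eqn:2025-1105g2}.

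The main obstacle is the bookkeeping of the two independent $\pm$ signs: the one carried by the superscript of $f^{(\pm)}$ (which governs the reflection factor arising from Note \ref{note:2025-717b} when shifting $k' \mapsto k'+m$) and the one combining $f_{k,m}$ with $\pm f_{-k,m}$ (which is preserved by both the shift and the $S T^2 S$-action). Once one verifies that the first of these signs is precisely what reappears inside the bracket $\{1 \pm (-1)^{(j+k)/2}e^{-\pi i m/4}\}$, while the second plays a purely decorative role, the whole computation reduces to the routine manipulation of the Gaussian exponent performed above.
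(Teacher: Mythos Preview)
Your proposal is correct and follows essentially the same route as the paper's proof: the paper also starts from \eqref{paper:eqn:2025-1105e1}, chooses representatives in $(-\tfrac{m}{2},\tfrac{m}{2}+m]$, splits the sum into the two halves $(A)$ and $(B)$, applies Note~\ref{note:2025-717b} to the shifted half, and expands $(j+k'+m)^2$ exactly as you do to produce the bracket $\{1\pm(-1)^{(j+k)/2}e^{-\pi i m/4}\}$; part~2) is then declared ``quite similar''. Your discussion of the two independent $\pm$ signs matches the paper's use of Note~\ref{note:2025-717b}, so nothing is missing.
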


\vspace{1mm}

\begin{proof} 1) \,\ We compute the RHS of 
\eqref{paper:eqn:2025-1105e1} as follows:
{\allowdisplaybreaks
\begin{eqnarray*}
& & \hspace{-10mm}
(f_{j,m}^{(\pm)} \, + \, f_{-j,m}^{(\pm)})|_{ST^2S} 
\,\ = \,\ 
\dfrac{-i \, \gamma_m}{2m} \, 
\sum\limits_{\substack{
-\frac{m}{2} \, < \, k \, \leq \frac{m}{2}+m \\[1mm]
k \, \equiv \, j \, {\rm mod} \, 2}}
e^{-\frac{\pi i}{4m}(j+k)^2} \, 
(f_{k,m}^{(\pm)} \, + \, f_{-k,m}^{(\pm)})
\\[2mm]
&=&
\frac{-i \, \gamma_m}{2m} \, \bigg[
\underbrace{
\sum_{\substack{-\frac{m}{2} \, < \, k \, \leq \frac{m}{2} \\[1mm]
k \, \equiv \, j \, {\rm mod} \, 2}}}_{(A)}
\, + \, 
\underbrace{\sum_{\substack{
\frac{m}{2} \, < \, k \, \leq \, \frac{m}{2}+m \\[1mm]
k \, \equiv \, j \, {\rm mod} \, 2}}}_{(B)}
\bigg] \, 
e^{-\frac{\pi i}{4m}(j+k)^2} \, 
(f_{k,m}^{(\pm)} \, + \, f_{-k,m}^{(\pm)})
\end{eqnarray*}}
Putting $k':=k-m$ and using Note \ref{note:2025-717b}, the part 
$(B)$ is rewitten as follows:
$$
(B) = \pm \, \frac{-i \, \gamma_m}{2m} \, 
\sum_{\substack{-\frac{m}{2} \, < \, k' \, \leq \frac{m}{2} \\[1mm]
k' \, \equiv \, j \, {\rm mod} \, 2}}
e^{-\frac{\pi i}{4m}(j+k'+m)^2} \, 
(f_{k',m}^{(\pm)} \, + \, f_{-k',m}^{(\pm)})
$$
So we have 
{\allowdisplaybreaks
\begin{eqnarray*}
(A)+(B) &=&
\frac{-i \, \gamma_m}{2m} \, 
\sum_{\substack{-\frac{m}{2} \, < \, k \, \leq \frac{m}{2} 
\\[1mm] k \, \equiv \, j \, {\rm mod} \, 2}}
\big(e^{-\frac{\pi i}{4m}(j+k)^2} \pm \hspace{-10mm}
\underbrace{e^{-\frac{\pi i}{4m}(j+k+m)^2}}_{\substack{|| 
\\[-1mm] {\displaystyle \hspace{4mm}
e^{-\frac{\pi i}{4m}(j+k)^2} \hspace{-2mm}
\underbrace{e^{-\frac{\pi i}{2}(j+k)}}_{\substack{|| \\[-1mm] 
{\displaystyle \hspace{7mm}
(-1)^{\frac{j+k}{2}}
}}} \hspace{-2mm}
e^{-\frac{\pi im}{4}}
}}} \hspace{-10mm}
\big) \, 
(f_{k,m}^{(\pm)} \, + \, f_{-k,m}^{(\pm)})
\\[2mm]
&=&
\frac{-i \, \gamma_m}{2m} \, 
\sum_{\substack{-\frac{m}{2} \, < \, k \, \leq \, \frac{m}{2} 
\\[1mm] k \, \equiv \, j \, {\rm mod} \, 2}}
e^{-\frac{\pi i}{4m}(j+k)^2} \, \big\{
1 \pm (-1)^{\frac{j+k}{2}} \, e^{-\frac{\pi im}{4}}\big\} \,
(f_{k,m}^{(\pm)} \, + \, f_{-k,m}^{(\pm)}) 
\end{eqnarray*}}
proving \eqref{paper:eqn:2025-1105g1}. \, 
The proof of \eqref{paper:eqn:2025-1105g2} is quite similar.
\end{proof}

\subsection{Characters of quantum Hamiltonian reduction}
\label{subsec:QHR:characters:minimal}


In this section we compute the characters of quantum Hamiltonian 
reduction of admissible $C^{(1)}_2$-modules with respect to 
the minimal nolpotent element $f=e_{-\theta}$ 
and their modular transformation properties.

\medskip

\begin{prop} \,\ 
\label{prop:2025-720a}
For $\Lambda_{n_1,n_2}^{[K]} \in P^+_K(S^{\vee}_{(u=1)})$, 
the characters 
$\overset{w}{\rm ch}{}^{(\pm)}_{H(\Lambda_{n_1,n_2}^{[K]})}$
of quantum Hamiltonian reduction with respect to the $sl_2$-triple 
$(x=\frac12 \theta, e=\frac12 e_{\theta}, f=e_{-\theta})$, 
where $\theta$ is the highest root, of $L(\Lambda_{n_1,n_2}^{[K]})$
are given by the following formulas:

\begin{enumerate}
\item[{\rm 1)}] \, $\big[\overset{w}{R}{}^{(+)} \cdot \, 
\overset{w}{\rm ch}{}^{(+)}_{H(\Lambda_{n_1,n_2}^{[K]})}\big]
(\tau,z\alpha_2,t) \, = 
\big[\widetilde{\theta}_{n_1+(K+3),2(K+3)}-
\widetilde{\theta}_{-n_1+(K+3),2(K+3)}\big](\tau,z,t)$
$$
\times \,\ \big[
\widetilde{\theta}_{n_2-(K+3),2(K+3)}-
\widetilde{\theta}_{-n_2-(K+3),2(K+3)}\big](\tau,z,t)
$$
\item[{\rm 2)}] \, $\big[\overset{w}{R}{}^{(-)} \cdot \, 
\overset{w}{\rm ch}{}^{(-)}_{H(\Lambda_{n_1,n_2}^{[K]})}\big]
(\tau,z\alpha_2,t)$
{\allowdisplaybreaks
\begin{eqnarray*}
&=&
- \,\ e^{\frac{\pi i}{2}(n_2-n_1)} \, 
\big[\widetilde{\theta}_{n_1+(K+3),2(K+3)}-e^{\pi in_1} \, 
\widetilde{\theta}_{-n_1+(K+3),2(K+3)}\big](\tau,z,t)
\\[2mm]
& & \hspace{15mm}
\times \,\ \big[
\widetilde{\theta}_{n_2-(K+3),2(K+3)}-e^{-\pi in_2} \, 
\widetilde{\theta}_{-n_2-(K+3),2(K+3)}\big](\tau,z,t)
\end{eqnarray*}}
\end{enumerate}
\end{prop}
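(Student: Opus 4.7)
The plan is to evaluate the right-hand sides of the defining identities \eqref{paper:eqn:2025-1105b1}, \eqref{paper:eqn:2025-1105b2} by substituting the explicit product formula \eqref{paper:eqn:2025-1031j} for $A_{\Lambda+\rho}$ and then applying the standard translation identities for the Jacobi theta functions $\theta_{j,m}$.

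I would first record the data for the minimal $sl_2$-triple $(x,e,f)=(\tfrac12\theta,\tfrac12 e_{\theta},e_{-\theta})$. Since $\theta=2\alpha_1+\alpha_2$ and $|\theta|^2=2$, one has $x=\alpha_1+\tfrac12\alpha_2$ and $|x|^2=\tfrac12$, so the third argument of $A_{\Lambda+\rho}$ becomes $2t+\tau/4$. In the $(z_1,z_2)$-coordinates of \eqref{paper:eqn:2025-1101h1}, solving $z\alpha_2=z_1\alpha_1+z_2(\alpha_1+\alpha_2)$ yields $(z_1,z_2)=(-z,z)$, which incidentally reconfirms $z\alpha_2\in\overline{\hhh}^{\,f}$; similarly $x$ corresponds to $(\tfrac12,\tfrac12)$. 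Therefore the argument $H-\tau x$ in part 1) has coordinates $(-z-\tau/2,\, z-\tau/2)$, and the argument $H-\tau x+x$ in part 2) has coordinates $(-z+\tfrac12-\tau/2,\, z+\tfrac12-\tau/2)$.

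The core computation substitutes these into \eqref{paper:eqn:2025-1031j} and applies to each of the four theta-factors the reflection identity $\theta_{j,m}(\tau,-w)=\theta_{-j,m}(\tau,w)$ together with the translation formula
\[
\theta_{j,m}(\tau,z+c\tau)\;=\;q^{-mc^2/4}\,e^{-\pi imcz}\,\theta_{j+mc,m}(\tau,z),
\]
with $m=2(K+3)$ and $c=\pm\tfrac12$. The two exponential prefactors $e^{\pm\pi i(K+3)z}$ coming from the two theta-differences cancel against each other, and the two $q^{-(K+3)/8}$-factors multiply to $q^{-(K+3)/4}$, which is precisely absorbed by the $q^{(K+3)/4}$ in the level prefactor $e^{2\pi i(K+3)(2t+\tau/4)}=e^{4\pi i(K+3)t}\,q^{(K+3)/4}$. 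The surviving $e^{4\pi i(K+3)t}$ is exactly the normalisation built into the product of two copies of $\widetilde{\theta}_{\,\cdot\,,\,2(K+3)}$, delivering the claimed product of theta-differences in part 1).

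Part 2) needs the same manipulation plus the half-period identity
\[
\theta_{j,m}(\tau,z+\tfrac12)\;=\;e^{\pi ij/2}\,\theta_{j,m}(\tau,z)\qquad (m\in 2\nnn),
\]
which is valid because for even $m$ one has $e^{\pi imn'}=1$ for every $n'\in\zzz$. Composing this $\tfrac12$-shift with the $\tau/2$-shift attaches a phase $e^{\pm\pi i(j\pm(K+3))/2}$ to each theta-factor; the pieces $e^{\pm\pi i(K+3)/2}$ cancel against their partners coming from the $\tau/2$-shift, and the remainder $e^{\pm\pi in_j/2}$ regroups into the overall prefactor $-e^{\pi i(n_2-n_1)/2}$ together with the inner modifiers $e^{\pi in_1}$ and $e^{-\pi in_2}$ appearing inside the two theta-differences. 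The main (and essentially only) obstacle is careful bookkeeping: the signs produced by $\theta_{j}\leftrightarrow\theta_{-j}$ in each difference, the exponential phases generated by the $\tau/2$- and $\tfrac12$-shifts, and the level prefactor $q^{(K+3)/4}$ must be combined so that everything except the $n_1,n_2$-dependent phases cancels cleanly and the compact form stated in the proposition emerges.
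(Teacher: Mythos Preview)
Your proposal is correct and follows essentially the same route as the paper's own proof: both arguments translate $H-\tau x$ (resp.\ $H-\tau x+x$) into the $(z_1,z_2)$-coordinates of \eqref{paper:eqn:2025-1101h1}, insert this into the theta-product \eqref{paper:eqn:2025-1031j}, and then reduce by the standard elliptic translation rules $\theta_{j,m}(\tau,z+a\tau)=q^{-ma^2/4}e^{-\pi imaz}\theta_{j+am,m}(\tau,z)$ and $\theta_{j,m}(\tau,z+\tfrac12)=e^{\pi ij/2}\theta_{j,m}(\tau,z)$ together with the reflection $\theta_{j,m}(\tau,-z)=\theta_{-j,m}(\tau,z)$. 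The only cosmetic difference is ordering: the paper applies the $\tau/2$-shift first and then reflects $z\mapsto -z$ at the end, whereas you reflect first and then shift; the resulting phase bookkeeping is equivalent, and your explicit remark that the $q^{(K+3)/4}$ from the level prefactor absorbs the two $q^{-(K+3)/8}$ factors is exactly what the paper leaves implicit.
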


\begin{proof} \,\ For $x= \frac12 \theta=\frac12(2\alpha_1+\alpha_2)$
and $H = z\alpha_2 \in \overline{\hhh}^f$ \,\ $(z \in \ccc)$, we have  
\begin{equation}
\left\{
\begin{array}{lclcc}
H-\tau x &=& z\alpha_2-\frac{\tau}{2}(2\alpha_1+\alpha_2)
&=&
(-z-\frac{\tau}{2})\alpha_1+(z-\frac{\tau}{2})(\alpha_1+\alpha_2)
\\[2mm]
H-\tau x+x &=& z\alpha_2+(\frac12-\frac{\tau}{2})(2\alpha_1+\alpha_2)
&=&
(-z+\frac12-\frac{\tau}{2})\alpha_1
+(z+\frac12-\frac{\tau}{2})(\alpha_1+\alpha_2)
\end{array}\right.
\label{eqn:2025-720a}
\end{equation}
Then the numerators of the quantum reduction are given 
by \eqref{paper:eqn:2025-1105b1} and \eqref{paper:eqn:2025-1105b2}
and \eqref{paper:eqn:2025-1105c1} as follows:
\begin{subequations}
{\allowdisplaybreaks
\begin{eqnarray}
& & \hspace{-20mm}
\big[\overset{K}{R}{}^{(+)} \cdot 
\overset{w}{\rm ch}{}^{(+)}_{H(\Lambda_{n_1,n_2}^{\, [K]})}
\big](\tau,H,t)
\, = \, 
A_{n_1,n_2}^{\, [K]}\Big(\tau, \, -\tau x+H, \, 
2t+\frac{|x|^2}{2}\tau\Big)
\nonumber
\\[2mm]
&=&
A_{n_1,n_2}^{\, [K]}\Big(\tau, \, 
\Big(-z-\frac{\tau}{2}\Big)\alpha_1+
\Big(z-\frac{\tau}{2}\Big)(\alpha_1+\alpha_2),
\, 2t+\frac{\tau}{4}\Big)
\nonumber
\\[2mm]
&=&
e^{\frac{\pi i(K+3)\tau}{2}} A_{n_1,n_2}^{\, [K]}
\Big(\tau, \,\ 
-z-\frac{\tau}{2}, \,\ z-\frac{\tau}{2}, \,\ 2t\Big)
\nonumber
\\[2mm]
&=&
q^{\frac{K+3}{4}} \, 
\big[\widetilde{\theta}_{n_1,2(K+3)}-
\widetilde{\theta}_{-n_1,2(K+3)}\big]
\Big(\tau, \, -z-\frac{\tau}{2}, \, t\Big) 
\nonumber
\\[1mm]
& & \hspace{4mm}
\times \,\ 
\big[\widetilde{\theta}_{n_2,2(K+3)}-
\widetilde{\theta}_{-n_2,2(K+3)}\big]
\Big(\tau, \, z-\frac{\tau}{2}, \, t\Big)
\label{eqn:2025-720b1}
\\[2mm]
& & \hspace{-20mm}
\big[\overset{w}{R}{}^{(-)} \cdot 
\overset{w}{\rm ch}{}^{(-)}_{H(\Lambda_{(n_1,n_2)}^{\, [K]})}
\big](\tau,H,t)
\, = \, 
A_{n_1,n_2}^{\, [K]}\Big(\tau, \, x-\tau x+H, \, 2t+
\frac{|x|^2}{2}\tau\Big)
\nonumber
\\[2mm]
&=&
A_{n_1,n_2}^{\, [K]}\Big(\tau, \, 
\Big(-z+\frac12-\frac{\tau}{2}\Big)\alpha_1
+\Big(z+\frac12-\frac{\tau}{2}\Big)(\alpha_1+\alpha_2),
\, 2t+\frac{\tau}{4}\Big)
\nonumber
\\[2mm]
&=&
e^{\frac{\pi i(K+3)\tau}{2}} A_{n_1,n_2}^{\, [K]}
\Big(\tau, \,\ 
-z+\frac12-\frac{\tau}{2}, \,\ z+\frac12-\frac{\tau}{2}, \,\ 2t\Big)
\nonumber
\\[2mm]
&=&
q^{\frac{K+3}{4}} \, 
\big[\widetilde{\theta}_{n_1,2(K+3)}-
\widetilde{\theta}_{-n_1,2(K+3)}\big]
\Big(\tau, \, -z+\frac12-\frac{\tau}{2}, \, t\Big) 
\nonumber
\\[1mm]
& & \hspace{4mm}
\times \,\ 
\big[\widetilde{\theta}_{n_2,2(K+3)}-
\widetilde{\theta}_{-n_2,2(K+3)}\big]
\Big(\tau, \, z+\frac12-\frac{\tau}{2}, \, t\Big)
\label{eqn:2025-720b2}
\end{eqnarray}}
\end{subequations}

By the elliptic transformation properties of $\theta_{j,m}(\tau,z)$:
$$ \left\{
\begin{array}{lcccl}
\theta_{j,m}(\tau, z+a) &=&
e^{\pi ija} \, \theta_{j,m}(\tau, z) & &
({\rm if} \,\ am \, \in \, \zzz)
\\[2mm]
\theta_{j,m}(\tau, z+a\tau) &=&
q^{-\frac{ma^2}{4}} \, e^{-\pi imaz} \, 
\theta_{j+am,m}(\tau, z)
& & (a \, \in \, \rrr)
\end{array}\right.
$$
we have
\begin{equation}
\left\{
\begin{array}{lcr}
\widetilde{\theta}_{j,2(K+3)}(\tau, \, z-\frac{\tau}{2}, \, t) 
&=&
q^{-\frac{K+3}{8}} \, e^{\pi i(K+3)z} \, 
\widetilde{\theta}_{j-(K+3),2(m+3)}(\tau,z,t)
\\[2mm]
\widetilde{\theta}_{j,2(K+3)}(\tau, \, z+\frac12-\frac{\tau}{2}, \, t) 
&=&
e^{\frac{\pi ij}{2}} \, 
q^{-\frac{K+3}{8}} \, e^{\pi i(K+3)z} \, 
\widetilde{\theta}_{j-(K+3),2(K+3)}(\tau,z,t)
\end{array}\right.
\label{eqn:2025-720c}
\end{equation}
Using \eqref{eqn:2025-720c}, the above formulas 
\eqref{eqn:2025-720b1} and \eqref{eqn:2025-720b2} are rewritten
as follows:
{\allowdisplaybreaks
\begin{eqnarray*}
& & \hspace{-10mm}
\big[\overset{w}{R}{}^{(+)} \cdot 
\overset{w}{\rm ch}{}^{(+)}_{H(\Lambda_{n_1,n_2}^{\, [K]})}
\big](\tau,z,t)
= \, 
e^{-\pi i(K+3)z} \big[
\widetilde{\theta}_{n_1-(K+3),2(K+3)}-
\widetilde{\theta}_{-n_1-(K+3),2(K+3)}\big](\tau,-z,t)
\\[2mm]
& & \hspace{15mm}
\times \,\ 
e^{\pi i(K+3)z} \big[
\widetilde{\theta}_{n_2-(K+3),2(K+3)}-
\widetilde{\theta}_{-n_2-(K+3),2(K+3)}\big](\tau,z,t)
\\[2mm]
&=&
- \,\ 
\big[\widetilde{\theta}_{n_1+(K+3),2(K+3)}-
\widetilde{\theta}_{-n_1+(K+3),2(K+3)}\big](\tau,z,t)
\\[2mm]
& & \hspace{5mm}
\times \,\ 
\big[\widetilde{\theta}_{n_2-(K+3),2(K+3)}-
\widetilde{\theta}_{-n_2-(K+3),2(K+3)}\big](\tau,z,t)
\\[3mm]
& & \hspace{-10mm}
\big[\overset{w}{R}{}^{(-)} \cdot 
\overset{w}{\rm ch}{}^{(-)}_{H(\Lambda_{n_1,n_2}^{\, [K]})}
\big](\tau,z,t)
\\[2mm]
&=&
e^{-\pi i(K+3)z} \big[
e^{\frac{\pi in_1}{2}} \, \widetilde{\theta}_{n_1-(K+3),2(K+3)}
-
e^{-\frac{\pi in_1}{2}} \, 
\widetilde{\theta}_{-n_1-(K+3),2(K+3)}\big](\tau,-z,t)
\\[2mm]
& & \hspace{5mm}
\times \,\
e^{\pi i(K+3)z} \big[
e^{\frac{\pi in_2}{2}} \, \widetilde{\theta}_{n_2-(K+3),2(K+3)}
-
e^{-\frac{\pi in_2}{2}} \, \widetilde{\theta}_{-n_2-(K+3),2(K+3)}
\big](\tau,z,t)
\\[2mm]
&=&
- \, e^{\frac{\pi i}{2}(n_2-n_1)} \, 
\big[\widetilde{\theta}_{n_1+(K+3),2(K+3)}-e^{\pi in_1} \, 
\widetilde{\theta}_{-n_1+(K+3),2(K+3)}\big](\tau,z,t)
\\[2mm]
& & \hspace{13mm}
\times \,\ \big[
\widetilde{\theta}_{n_2-(K+3),2(K+3)}-e^{-\pi in_2} \, 
\widetilde{\theta}_{-n_2-(K+3),2(K+3)}\big](\tau,z,t)
\end{eqnarray*}}
proving Proposition \ref{prop:2025-720a}.
\end{proof}

\medskip

By \eqref{eqn:2025-721d}, the formulas in 
the above Proposition \ref{prop:2025-720a} are written 
in terms of $f_{j,2(K+3)}^{(\pm)}$'s as follows:

\medskip



\begin{prop} 
\label{paper:prop:2025-1105b}
Under the same assumption with Proposition \ref{prop:2025-720a},
the following formulas hold:

\begin{enumerate}
\item[{\rm 1)}]
\begin{subequations}
\begin{enumerate}
\item[{\rm (i)}] \,\ $4 \, \big[\overset{w}{R}{}^{(+)} \cdot \, 
\overset{w}{\rm ch}{}^{(+)}_{H(\Lambda_{n_1,n_2}^{[K]})}
\big](\tau,z\alpha_2,t)$
{\allowdisplaybreaks
\begin{eqnarray}
& & \hspace{-20mm}
= \,\ \Big\{
\big(f_{-n_1+(K+3),2(K+3)}^{(-)}+f_{n_1-(K+3),2(K+3)}^{(-)}\big)
+ 
\big(f_{-n_1+(K+3),2(K+3)}^{(+)}-f_{n_1-(K+3),2(K+3)}^{(+)}\big)\Big\}
\nonumber
\\[2mm]
& & \hspace{-18mm}
\times \, \Big\{
- \big(f_{-n_2+(K+3),2(K+3)}^{(-)}+f_{n_2-(K+3),2(K+3)}^{(-)}\big)
+ 
\big(f_{-n_2+(K+3),2(K+3)}^{(+)}-f_{n_2-(K+3),2(K+3)}^{(+)}\big)\Big\}
\nonumber
\\[0mm]
& &
\label{eqn:2025-724b1}
\end{eqnarray}}

\vspace{-7mm}
\item[{\rm (ii)}] \,\ $4 \, \big[\overset{w}{R}{}^{(+)} \cdot \, 
\overset{w}{\rm ch}{}^{(+)}_{H(
\Lambda_{2(K+3)-n_1,n_2}^{[K]})}\big](\tau,z\alpha_2,t)$
{\allowdisplaybreaks
\begin{eqnarray}
& & \hspace{-20mm}
= \,\ \Big\{
\big(f_{-n_1+(K+3),2(K+3)}^{(-)}+f_{n_1-(K+3),2(K+3)}^{(-)}\big)
- 
\big(f_{-n_1+(K+3),2(K+3)}^{(+)}-f_{n_1-(K+3),2(K+3)}^{(+)}\big)\Big\}
\nonumber
\\[2mm]
& & \hspace{-18mm}
\times \, \Big\{
- \big(f_{-n_2+(K+3),2(K+3)}^{(-)}+f_{n_2-(K+3),2(K+3)}^{(-)}\big)
+ 
\big(f_{-n_2+(K+3),2(K+3)}^{(+)}-f_{n_2-(K+3),2(K+3)}^{(+)}\big)\Big\}
\nonumber
\\[0mm]
& &
\label{eqn:2025-724b2}
\end{eqnarray}}

\vspace{-7mm}
\item[{\rm (iii)}] \,\ $4 \, \big[\overset{w}{R}{}^{(+)} \cdot \, 
\overset{w}{\rm ch}{}^{(+)}_{H(\Lambda_{n_1,2(K+3)-n_2}^{[K]})}
\big](\tau,z\alpha_2,t)$
{\allowdisplaybreaks
\begin{eqnarray}
& & \hspace{-20mm}
= \,\ - \Big\{
\big(f_{-n_1+(K+3),2(K+3)}^{(-)}+f_{n_1-(K+3),2(K+3)}^{(-)}\big)
+ 
\big(f_{-n_1+(K+3),2(K+3)}^{(+)}-f_{n_1-(K+3),2(K+3)}^{(+)}\big)\Big\}
\nonumber
\\[2mm]
& & \hspace{-18mm}
\times \, \Big\{
\big(f_{-n_2+(K+3),2(K+3)}^{(-)}+f_{n_2-(K+3),2(K+3)}^{(-)}\big)
+ 
\big(f_{-n_2+(K+3),2(K+3)}^{(+)}-f_{n_2-(K+3),2(K+3)}^{(+)}\big)\Big\}
\nonumber
\\[0mm]
& &
\label{eqn:2025-724b3}
\end{eqnarray}}

\vspace{-7mm}
\item[{\rm (iv)}] \,\ $4 \, \big[\overset{w}{R}{}^{(+)} \cdot \, 
\overset{w}{\rm ch}{}^{(+)}_{H(
\Lambda_{2(K+3)-n_1,2(K+3)-n_2}^{[K]})}\big](\tau,z\alpha_2,t)$
{\allowdisplaybreaks
\begin{eqnarray}
& & \hspace{-20mm}
= \,\ \Big\{
-\big(f_{-n_1+(K+3),2(K+3)}^{(-)}+f_{n_1-(K+3),2(K+3)}^{(-)}\big)
+ 
\big(f_{-n_1+(K+3),2(K+3)}^{(+)}-f_{n_1-(K+3),2(K+3)}^{(+)}\big)\Big\}
\nonumber
\\[2mm]
& & \hspace{-18mm}
\times \, \Big\{
\big(f_{-n_2+(K+3),2(K+3)}^{(-)}+f_{n_2-(K+3),2(K+3)}^{(-)}\big)
+ 
\big(f_{-n_2+(K+3),2(K+3)}^{(+)}-f_{n_2-(K+3),2(K+3)}^{(+)}\big)\Big\}
\nonumber
\\[0mm]
& &
\label{eqn:2025-724b4}
\end{eqnarray}}
\end{enumerate}
\end{subequations}

\vspace{-3mm}

\item[{\rm 2)}]
\begin{subequations}
\begin{enumerate}
\item[{\rm (i)}] \,\ $4 \, \big[\overset{w}{R}{}^{(-)} \cdot \, 
\overset{w}{\rm ch}{}^{(-)}_{H(\Lambda_{n_1,n_2}^{[K]})}
\big](\tau,z\alpha_2,t) 
\,\ = \,\ e^{\frac{\pi i}{2}(n_1-n_2)}$
{\allowdisplaybreaks
\begin{eqnarray}
& & \hspace{-20mm}
\times \, \Big\{
\big(f_{-n_1+(K+3),2(K+3)}^{(-)}-f_{n_1-(K+3),2(K+3)}^{(-)}\big)
+ 
\big(f_{-n_1+(K+3),2(K+3)}^{(+)}+f_{n_1-(K+3),2(K+3)}^{(+)}\big)\Big\}
\nonumber
\\[2mm]
& & \hspace{-20mm}
\times \, \Big\{
- 
\big(f_{-n_2+(K+3),2(K+3)}^{(-)}
+ 
f_{n_2-(K+3),2(K+3)}^{(-)}\big)
+ 
\big(f_{-n_2+(K+3),2(K+3)}^{(+)}
- 
f_{n_2-(K+3),2(K+3)}^{(+)}\big)\Big\}
\nonumber
\\[0mm]
& &
\label{eqn:2025-724c1}
\end{eqnarray}}

\vspace{-7mm}
\item[{\rm (ii)}] \,\ $4 \, \big[\overset{w}{R}{}^{(-)} \cdot \, 
\overset{w}{\rm ch}{}^{(-)}_{H(
\Lambda_{2(K+3)-n_1,n_2}^{[K]})}\big](\tau,z\alpha_2,t)
\,\ = \,\ e^{\frac{\pi i}{2}(n_1-n_2)}$
{\allowdisplaybreaks
\begin{eqnarray}
& & \hspace{-20mm}
\times \, \Big\{
\big(f_{-n_1+(K+3),2(K+3)}^{(-)} - f_{n_1-(K+3),2(K+3)}^{(-)}\big)
- 
\big(f_{-n_1+(K+3),2(K+3)}^{(+)}+f_{n_1-(K+3),2(K+3)}^{(+)}\big)\Big\}
\nonumber
\\[2mm]
& & \hspace{-20mm}
\times \, \Big\{
- 
\big(f_{-n_2+(K+3),2(K+3)}^{(-)}
+ 
f_{n_2-(K+3),2(K+3)}^{(-)}\big)
+ 
\big(f_{-n_2+(K+3),2(K+3)}^{(+)}
- 
f_{n_2-(K+3),2(K+3)}^{(+)}\big)\Big\}
\nonumber
\\[0mm]
& &
\label{eqn:2025-724c2}
\end{eqnarray}}

\vspace{-7mm}
\item[{\rm (iii)}] \,\ $4 \, \big[\overset{w}{R}{}^{(+)} \cdot \, 
\overset{w}{\rm ch}{}^{(+)}_{H(\Lambda_{n_1,2(K+3)-n_2}^{[K]})}
\big](\tau,z\alpha_2,t)
\,\ = \,\ - \,\ e^{\frac{\pi i}{2}(n_1-n_2)}$
{\allowdisplaybreaks
\begin{eqnarray}
& & \hspace{-20mm}
\times \, \Big\{
\big(f_{-n_1+(K+3),2(K+3)}^{(-)}-f_{n_1-(K+3),2(K+3)}^{(-)}\big)
+ 
\big(f_{-n_1+(K+3),2(K+3)}^{(+)}+f_{n_1-(K+3),2(K+3)}^{(+)}\big)\Big\}
\nonumber
\\[2mm]
& & \hspace{-20mm}
\times \, \Big\{
\big(f_{-n_2+(K+3),2(K+3)}^{(-)}+f_{n_2-(K+3),2(K+3)}^{(-)}\big)
+ 
\big(f_{-n_2+(K+3),2(K+3)}^{(+)}-f_{n_2-(K+3),2(m+3)}^{(+)}\big)\Big\}
\nonumber
\\[0mm]
& &
\label{eqn:2025-724c3}
\end{eqnarray}}

\vspace{-7mm}
\item[{\rm (iv)}] \,\ $4 \, \big[\overset{w}{R}{}^{(+)} \cdot \, 
\overset{w}{\rm ch}{}^{(+)}_{H(
\Lambda_{2(K+3)-n_1,2(K+3)-n_2}^{[K]})}\big](\tau,z\alpha_2,t)
\,\ = \,\ e^{\frac{\pi i}{2}(n_1-n_2)}$
{\allowdisplaybreaks
\begin{eqnarray}
& & \hspace{-20mm}
\times \, \Big\{
-\big(f_{-n_1+(K+3),2(K+3)}^{(-)}-f_{n_1-(K+3),2(K+3)}^{(-)}\big)
+ 
\big(f_{-n_1+(K+3),2(K+3)}^{(+)}+f_{n_1-(K+3),2(K+3)}^{(+)}\big)\Big\}
\nonumber
\\[2mm]
& & \hspace{-18mm}
\times \, \Big\{
\big(f_{-n_2+(K+3),2(K+3)}^{(-)}+f_{n_2-(K+3),2(K+3)}^{(-)}\big)
+ 
\big(f_{-n_2+(K+3),2(K+3)}^{(+)}-f_{n_2-(K+3),2(K+3)}^{(+)}\big)\Big\}
\nonumber
\\[0mm]
& &
\label{eqn:2025-724c4}
\end{eqnarray}}
\end{enumerate}
\end{subequations}
\end{enumerate}
\end{prop}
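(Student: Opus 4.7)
The proof is a routine algebraic translation of Proposition~\ref{prop:2025-720a} into the $f^{(\pm)}_{j,2(K+3)}$ basis. The crucial ingredient is the inversion of the definition \eqref{eqn:2025-721d}: with $m=2(K+3)$ and the $2m$-periodicity $\widetilde{\theta}_{j+2m,m}=\widetilde{\theta}_{j,m}$, one obtains
\[
\widetilde{\theta}_{n_1+(K+3),2(K+3)} \,=\, \tfrac12\bigl(f^{(+)}_{n_1-(K+3),2(K+3)} - f^{(-)}_{n_1-(K+3),2(K+3)}\bigr),
\]
\[
\widetilde{\theta}_{-n_1+(K+3),2(K+3)} \,=\, \tfrac12\bigl(f^{(+)}_{-n_1+(K+3),2(K+3)} + f^{(-)}_{-n_1+(K+3),2(K+3)}\bigr),
\]
together with the two parallel identities for the $n_2$-thetas, one of which uses periodicity once to rewrite $\widetilde{\theta}_{-n_2-(K+3),2(K+3)}$ in terms of $f^{(\pm)}_{-n_2+(K+3),2(K+3)}$.

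For part 1)(i), the plan is to substitute these four identities into the product $[\widetilde{\theta}_{n_1+(K+3)}-\widetilde{\theta}_{-n_1+(K+3)}]\cdot[\widetilde{\theta}_{n_2-(K+3)}-\widetilde{\theta}_{-n_2-(K+3)}]$ of formula 1) of Proposition~\ref{prop:2025-720a} and distribute the prefactor $4$. Each bracket becomes, up to an overall sign, the sum of one $f^{(+)}$- and one $f^{(-)}$-summand, which reproduces precisely the outer grouping $(\text{stuff}^{(-)})+(\text{stuff}^{(+)})$ seen in 1)(i). The remaining sub-cases 1)(ii)--(iv) are then obtained by the substitution $n_i\mapsto 2(K+3)-n_i$ in Proposition~\ref{prop:2025-720a}: shifted subscripts like $3(K+3)-n_1$ collapse via $\widetilde{\theta}_{j+4(K+3),2(K+3)}=\widetilde{\theta}_{j,2(K+3)}$ to $-(K+3)-n_1$, swapping the two thetas inside the affected bracket. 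In the $f^{(\pm)}$-basis this swap flips the sign of the $f^{(+)}$-summand while preserving the $f^{(-)}$-summand, which is exactly the pattern distinguishing (i) from (ii) (swap in the $n_1$-bracket), (iii) (swap in the $n_2$-bracket), and (iv) (both).

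Part 2) proceeds identically, starting from formula 2) of Proposition~\ref{prop:2025-720a}. The internal phases $e^{\pi i n_1}$ and $e^{-\pi i n_2}$ switch which of $f^{(+)}$ or $f^{(-)}$ carries the relative sign inside each bracket, yielding the reversed pattern visible in 2)(i); the outer $e^{\pi i(n_1-n_2)/2}$ absorbs the $-e^{\pi i(n_2-n_1)/2}$ of Proposition~\ref{prop:2025-720a} together with the signs collected during the expansion. Since $K+3\in\zzz$, the substitutions $n_i\mapsto 2(K+3)-n_i$ leave each $e^{\pm\pi i n_i}$ invariant, so only the theta-swap signs propagate and generate 2)(ii)--(iv) in parallel with 1)(ii)--(iv). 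The main obstacle is purely bookkeeping: each of the eight products expands into sixteen $f^{(\pm)}$-monomials, and one must track carefully the signs from the inversion of \eqref{eqn:2025-721d}, the periodicity reduction of arguments outside the canonical range, and in part 2) the $e^{\pm\pi i n_i}$-phases combined with the parity constraint $n_1-n_2\in\zzz_{\rm odd}$ from Proposition~\ref{paper:prop:2025-1031a}, so that everything conspires into the exact $\pm$ pattern tabulated inside the curly braces. No analytic ingredient beyond Proposition~\ref{prop:2025-720a} and the definition \eqref{eqn:2025-721d} is needed.
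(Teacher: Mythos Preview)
Your proposal is correct and matches the paper's own approach: the paper simply states that Proposition~\ref{paper:prop:2025-1105b} is obtained by rewriting the formulas of Proposition~\ref{prop:2025-720a} in terms of $f^{(\pm)}_{j,2(K+3)}$ via the definition \eqref{eqn:2025-721d}, with no further detail. Your inversion identities, the periodicity reduction of $\widetilde{\theta}_{-n_2-(K+3),2(K+3)}$, and the observation that $n_i\mapsto 2(K+3)-n_i$ swaps the two thetas in a bracket (hence flips the sign of the $f^{(+)}$-summand only) are exactly the bookkeeping that the paper leaves implicit.
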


\medskip

\begin{note} \,\
\label{note:2025-1020b}
Under the same assumption with Proposition \ref{prop:2025-720a},
the following formula holds:
\begin{enumerate}
\item[{\rm 1)}] \, 
$\big[\overset{w}{R}{}^{(\pm)}\cdot \, 
\overset{w}{\rm ch}{}^{(\pm)}_{H(
\Lambda_{2(K+3)-n_2,2(K+3)-n_1}^{[K]})}\big]
(\tau,z\alpha_2,t)
\,\ = \,\ \pm \, 
\big[\overset{w}{R}{}^{(\pm)}\cdot \, 
\overset{w}{\rm ch}{}^{(\pm)}_{H(
\Lambda_{n_1,n_2}^{[K]})}\big](\tau,z\alpha_2,t)$

\item[{\rm 2)}] \, So we may consider the quantum Hamiltonian reduction
only in the case $n_1 \in \zzz_{\rm odd}$ and $n_2 \in \zzz_{\rm even}$. 
\end{enumerate}
\end{note}

\medskip

We now going to compute the modular transformation properties 
of the numerators of the quantum Hamiltonian reduction by 
using the following formulas. 

\medskip

\begin{note} 
\label{note:2025-724c}
For $K \in \zzz_{\geq -1, \, {\rm odd}}$ and $j \in \zzz$,
the following formulas hold:
\begin{subequations}
\begin{enumerate}
\item[{\rm 1)}] \quad $(f_{j,2(K+3)}^{(\pm)} \pm 
f_{-j,2(K+3)}^{(\pm)})|_{ST^2S} \, = \,\ 
\dfrac{-i \, \gamma_{2(K+3)}}{4(K+3)}$
\begin{equation} \hspace{-5mm}
\times 
\sum_{\substack{\\[0.5mm] 
-(K+3) \, < \, k \, \leq K+3
\\[1mm] k \, \equiv \, j \, {\rm mod} \, 2}} \hspace{-2mm}
e^{-\frac{\pi i}{8(K+3)}(j+k)^2} \, \big\{
1 \pm (-1)^{\frac{j+k}{2}} \, (-1)^{\frac{K+3}{2}}\big\} \,
(f_{k,2(K+3)}^{(\pm)} \, \pm \, f_{-k,2(K+3)}^{(\pm)})
\label{eqn:2025-724a1}
\end{equation}

\item[{\rm 2)}] \quad $(f_{j,2(K+3)}^{(\pm)} \mp 
f_{-j,2(K+3)}^{(\pm)})|_{ST^2S} \, = \,\ 
\dfrac{-i \, \gamma_{2(K+3)}}{4(K+3)}$
\begin{equation} \hspace{-5mm}
\times 
\sum_{\substack{\\[0.5mm] 
-(K+3) \, < \, k \, < K+3
\\[1mm] k \, \equiv \, j \, {\rm mod} \, 2}} \hspace{-2mm}
e^{-\frac{\pi i}{8(K+3)}(j+k)^2} \, \big\{
1 \pm (-1)^{\frac{j+k}{2}} \, (-1)^{\frac{K+3}{2}}\big\} \,
(f_{k,2(K+3)}^{(\pm)} \, \mp \, f_{-k,2(K+3)}^{(\pm)})
\label{eqn:2025-724a2}
\end{equation}

\item[{\rm 3)}] \quad In the case $j \in \zzz_{\rm odd}$, 
$k=K+3$ does not occur in the RHS of \eqref{eqn:2025-724a1}, 
so we have
{\allowdisplaybreaks
\begin{eqnarray}
& & \hspace{-20mm}
(f_{j,2(K+3)}^{(\pm)} \pm 
f_{-j,2(K+3)}^{(\pm)})|_{ST^2S} \, = \,\ 
\dfrac{-i \, \gamma_{2(K+3)}}{4(K+3)}
\nonumber
\\[2mm]
& & \hspace{-30mm}
\times 
\sum_{\substack{\\[0.5mm] 
-(K+3) \, < \, k \, < K+3
\\[1mm] k \, \equiv \, j \, {\rm mod} \, 2}} \hspace{-2mm}
e^{-\frac{\pi i}{8(K+3)}(j+k)^2} \, \big\{
1 \pm (-1)^{\frac{j+k}{2}} \, (-1)^{\frac{K+3}{2}}\big\} \,
(f_{k,2(K+3)}^{(\pm)} \, \pm \, f_{-k,2(K+3)}^{(\pm)})
\label{eqn:2025-724a3}
\end{eqnarray}}
\end{enumerate}
\end{subequations}
\end{note}

\begin{proof} These formulas are obtained immediately from 
Lemma \ref{lemma:2025-721d} by letting $m=2(K+3)$.
\end{proof}

\medskip

By easy calculation using formulas in the above 
Note \ref{note:2025-724c}, we obtain the following:

\medskip

\begin{note} \quad 
\label{note:2025-724d}
For $K \in \zzz_{\geq -1, \, {\rm odd}}$ and $j \in \zzz$ and 
$j \in \zzz$, the following formulas hold:

\begin{enumerate}
\item[{\rm 1)}]
\begin{enumerate}
\item[{\rm (i)}] \,\ $\big[
(f_{j,2(K+3)}^{(+)} - f_{-j,2(K+3)}^{(+)})
\, + \, 
(f_{j,2(K+3)}^{(-)} + f_{-j,2(K+3)}^{(-)})
\big]\big|_{ST^2S} $
{\allowdisplaybreaks
\begin{eqnarray*}
& & \hspace{-17mm}
= \,\ \frac{-i \, \gamma_{2(K+3)}}{4(K+3)}
\sum_{\substack{\\[0.5mm] 
-(K+3) \, < \, k \, < K+3
\\[1mm] k \, \equiv \, j \, {\rm mod} \, 2}} \hspace{-2mm}
e^{-\frac{\pi i}{8(K+3)}(j+k)^2} \, \Big\{
\\[2mm]
& & \hspace{-10mm}
\big[(f_{k,2(K+3)}^{(+)} \, - \, f_{-k,2(K+3)}^{(+)})
+
(f_{k,2(K+3)}^{(-)} \, + \, f_{-k,2(K+3)}^{(-)})\big]
\\[2mm]
& & \hspace{-15mm}
+ \,\ (-1)^{\frac{j+k}{2}} \, (-1)^{\frac{K+3}{2}} \, 
\big[(f_{k,2(K+3)}^{(+)} \, - \, f_{-k,2(K+3)}^{(+)})
-
(f_{k,2(K+3)}^{(-)} \, + \, f_{-k,2(K+3)}^{(-)})\big]
\Big\}
\end{eqnarray*}}

\vspace{-2mm}

\item[{\rm (ii)}] \,\ $\big[
(f_{j,2(m+3)}^{(+)} - f_{-j,2(K+3)}^{(+)})
\, - \, 
(f_{j,2(m+3)}^{(-)} + f_{-j,2(K+3)}^{(-)})
\big]\big|_{ST^2S} 
\, = \,\ 
\dfrac{-i \, \gamma_{2(K+3)}}{4(K+3)}$
{\allowdisplaybreaks
\begin{eqnarray*}
& & \hspace{-17mm}
= \,\ \frac{-i \, \gamma_{2(K+3)}}{4(K+3)}
\sum_{\substack{\\[0.5mm] 
-(K+3) \, < \, k \, < K+3
\\[1mm] k \, \equiv \, j \, {\rm mod} \, 2}} \hspace{-2mm}
e^{-\frac{\pi i}{8(K+3)}(j+k)^2} \, \Big\{
\\[2mm]
& & \hspace{-10mm}
\big[(f_{k,2(K+3)}^{(+)} \, - \, f_{-k,2(K+3)}^{(+)})
-
(f_{k,2(K+3)}^{(-)} \, + \, f_{-k,2(K+3)}^{(-)})\big]
\\[2mm]
& & \hspace{-15mm}
+ \,\ (-1)^{\frac{j+k}{2}} \, (-1)^{\frac{K+3}{2}} \, 
\big[(f_{k,2(K+3)}^{(+)} \, - \, f_{-k,2(K+3)}^{(+)})
+
(f_{k,2(K+3)}^{(-)} \, + \, f_{-k,2(K+3)}^{(-)})\big]
\Big\}
\end{eqnarray*}}
\end{enumerate}

\item[{\rm 2)}] \quad If $j \in \zzz_{\rm odd}$, then

\begin{enumerate}
\item[{\rm (i)}] \,\ $\big[
(f_{j,2(K+3)}^{(+)} + f_{-j,2(K+3)}^{(+)})
\, + \, 
(f_{j,2(K+3)}^{(-)} - f_{-j,2(K+3)}^{(-)})
\big]\big|_{ST^2S} $
{\allowdisplaybreaks
\begin{eqnarray*}
& & \hspace{-17mm}
= \,\ \frac{-i \, \gamma_{2(K+3)}}{4(K+3)}
\sum_{\substack{\\[0.5mm] 
-(K+3) \, < \, k \, < K+3
\\[1mm] k \, \equiv \, j \, {\rm mod} \, 2}} \hspace{-2mm}
e^{-\frac{\pi i}{8(K+3)}(j+k)^2} \, \Big\{
\\[2mm]
& & \hspace{-10mm}
\big[(f_{k,2(K+3)}^{(+)} \, + \, f_{-k,2(K+3)}^{(+)})
+
(f_{k,2(K+3)}^{(-)} \, - \, f_{-k,2(K+3)}^{(-)})\big]
\\[2mm]
& & \hspace{-15mm}
+ \,\ (-1)^{\frac{j+k}{2}} \, (-1)^{\frac{K+3}{2}} \, 
\big[(f_{k,2(K+3)}^{(+)} \, + \, f_{-k,2(K+3)}^{(+)})
-
(f_{k,2(K+3)}^{(-)} \, - \, f_{-k,2(K+3)}^{(-)})\big]
\Big\}
\end{eqnarray*}}

\vspace{-2mm}

\item[{\rm (ii)}] \,\ $\big[
(f_{j,2(K+3)}^{(+)} + f_{-j,2(K+3)}^{(+)})
\, - \, 
(f_{j,2(K+3)}^{(-)} - f_{-j,2(K+3)}^{(-)})
\big]\big|_{ST^2S} $
{\allowdisplaybreaks
\begin{eqnarray*}
& & \hspace{-17mm}
= \,\ \frac{-i \, \gamma_{2(K+3)}}{4(K+3)}
\sum_{\substack{\\[0.5mm] 
-(K+3) \, < \, k \, < K+3
\\[1mm] k \, \equiv \, j \, {\rm mod} \, 2}} \hspace{-2mm}
e^{-\frac{\pi i}{8(K+3)}(j+k)^2} \, \Big\{
\\[2mm]
& & \hspace{-10mm}
\big[(f_{k,2(K+3)}^{(+)} \, + \, f_{-k,2(K+3)}^{(+)})
-
(f_{k,2(K+3)}^{(-)} \, - \, f_{-k,2(K+3)}^{(-)})\big]
\\[2mm]
& & \hspace{-15mm}
+ \,\ (-1)^{\frac{j+k}{2}} \, (-1)^{\frac{K+3}{2}} \, 
\big[(f_{k,2(K+3)}^{(+)} \, + \, f_{-k,2(K+3)}^{(+)})
+
(f_{k,2(K+3)}^{(-)} \, - \, f_{-k,2(K+3)}^{(-)})\big]
\Big\}
\end{eqnarray*}}
\end{enumerate}
\end{enumerate}
\end{note}

\medskip

Letting $j \rightarrow -j+(K+3)$ and $k \rightarrow -k+(K+3)$
in the above Note \ref{note:2025-724d} gives the following formulas.

\medskip

\begin{note} 
\label{note:2025-724e}
For $K \in \zzz_{\geq -1, \, {\rm odd}}$ and $j \in \zzz$, 
the following formulas hold:

\begin{enumerate}
\item[{\rm 1)}]
\begin{enumerate}
\item[{\rm (i)}] $\big[
(f_{-j+(K+3),2(K+3)}^{(+)} - f_{j-(K+3),2(K+3)}^{(+)})
\, + \, 
(f_{-j+(K+3),2(K+3)}^{(-)} + f_{j-(K+3),2(K+3)}^{(-)})
\big]\big|_{ST^2S} $
{\allowdisplaybreaks
\begin{eqnarray*}
& & \hspace{-17mm}
= \,\ \frac{-i \, \gamma_{2(K+3)}}{4(K+3)}
\sum_{\substack{\\[0.5mm] 
0 \, < \, k \, < \, 2(K+3)
\\[1mm] k \, \equiv \, j \, {\rm mod} \, 2}} \hspace{-2mm}
e^{-\frac{\pi i}{8(K+3)}(j+k)^2} \, \Big\{
(-1)^{\frac{j+k}{2}} \, (-1)^{\frac{K+3}{2}} \, 
\\[2mm]
& & \hspace{-10mm}
\times \, 
\big[(f_{-k+(K+3),2(K+3)}^{(+)} \, - \, f_{k-(K+3),2(K+3)}^{(+)})
+
(f_{-k+(K+3),2(K+3)}^{(-)} \, + \, f_{k-(K+3),2(K+3)}^{(-)})\big]
\\[2mm]
& & \hspace{-15mm}
+ \,\ 
\big[(f_{-k+(K+3),2(K+3)}^{(+)} \, - \, f_{k-(K+3),2(K+3)}^{(+)})
-
(f_{-k+(K+3),2(K+3)}^{(-)} \, + \, f_{k-(K+3),2(K+3)}^{(-)})\big]
\Big\}
\end{eqnarray*}}

\vspace{-2mm}

\item[{\rm (ii)}] \,\ $\big[
(f_{-j+(K+3),2(K+3)}^{(+)} - f_{j-(K+3),2(K+3)}^{(+)})
\, - \, 
(f_{-j+(K+3),2(K+3)}^{(-)} + f_{j-(K+3),2(K+3)}^{(-)})
\big]\big|_{ST^2S} $
{\allowdisplaybreaks
\begin{eqnarray*}
& & \hspace{-17mm}
= \,\ \frac{-i \, \gamma_{2(K+3)}}{4(K+3)}
\sum_{\substack{\\[0.5mm] 
0 \, < \, k \, < \, 2(K+3)
\\[1mm] k \, \equiv \, j \, {\rm mod} \, 2}} \hspace{-2mm}
e^{-\frac{\pi i}{8(K+3)}(j+k)^2} \, \Big\{ \, 
(-1)^{\frac{j+k}{2}} \, (-1)^{\frac{K+3}{2}} \,
\\[2mm]
& & \hspace{-10mm}
\times \, 
\big[(f_{-k+(K+3),2(K+3)}^{(+)} \, - \, f_{k-(K+3),2(K+3)}^{(+)})
-
(f_{-k+(K+3),2(K+3)}^{(-)} \, + \, f_{k-(K+3),2(K+3)}^{(-)})\big]
\\[2mm]
& & \hspace{-15mm}
+ \,\  
\big[(f_{-k+(K+3),2(K+3)}^{(+)} \, - \, f_{k-(K+3),2(K+3)}^{(+)})
+
(f_{-k+(K+3),2(K+3)}^{(-)} \, + \, f_{k-(K+3),2(K+3)}^{(-)})\big]
\Big\}
\end{eqnarray*}}
\end{enumerate}

\item[{\rm 2)}] \,\ If $j \in \zzz_{\rm odd}$, then

\begin{enumerate}
\item[{\rm (i)}] \,\ $\big[
(f_{-j+(K+3),2(K+3)}^{(+)} + f_{j-(K+3),2(K+3)}^{(+)})
\, + \, 
(f_{-j+(K+3),2(K+3)}^{(-)} - f_{j-(K+3),2(K+3)}^{(-)})
\big]\big|_{ST^2S} $
{\allowdisplaybreaks
\begin{eqnarray*}
& & \hspace{-17mm}
= \,\ \frac{-i \, \gamma_{2(K+3)}}{4(K+3)}
\sum_{\substack{\\[0.5mm] 
0 \, < \, k \, < \, 2(K+3)
\\[1mm] k \, \equiv \, j \, {\rm mod} \, 2}} \hspace{-2mm}
e^{-\frac{\pi i}{8(K+3)}(j+k)^2} \, \Big\{ \, 
(-1)^{\frac{j+k}{2}} \, (-1)^{\frac{K+3}{2}} \, 
\\[2mm]
& & \hspace{-10mm}
\times \, 
\big[(f_{-k+(K+3),2(K+3)}^{(+)} \, + \, f_{k-(K+3),2(K+3)}^{(+)})
+
(f_{-k+(K+3),2(K+3)}^{(-)} \, - \, f_{k-(K+3),2(K+3)}^{(-)})\big]
\\[2mm]
& & \hspace{-15mm}
+ \,\ 
\big[(f_{-k+(K+3),2(K+3)}^{(+)} \, + \, f_{k-(K+3),2(K+3)}^{(+)})
-
(f_{-k+(K+3),2(K+3)}^{(-)} \, - \, f_{k-(K+3),2(K+3)}^{(-)})\big]
\Big\}
\end{eqnarray*}}

\vspace{-2mm}

\item[{\rm (ii)}] \,\ $\big[
(f_{-j+(K+3),2(K+3)}^{(+)} + f_{j-(K+3),2(K+3)}^{(+)})
\, - \, 
(f_{-j+(K+3),2(K+3)}^{(-)} - f_{j-(K+3),2(K+3)}^{(-)})
\big]\big|_{ST^2S} $
{\allowdisplaybreaks
\begin{eqnarray*}
& & \hspace{-17mm}
= \,\ \frac{-i \, \gamma_{2(K+3)}}{4(K+3)}
\sum_{\substack{\\[0.5mm] 
0 \, < \, k \, < \, 2(K+3)
\\[1mm] k \, \equiv \, j \, {\rm mod} \, 2}} \hspace{-2mm}
e^{-\frac{\pi i}{8(K+3)}(j+k)^2} \, \Big\{ \, 
(-1)^{\frac{j+k}{2}} \, (-1)^{\frac{K+3}{2}} \, 
\\[2mm]
& & \hspace{-10mm}
\times \, 
\big[(f_{-k+(K+3),2(K+3)}^{(+)} \, + \, f_{k-(K+3),2(K+3)}^{(+)})
-
(f_{-k+(K+3),2(K+3)}^{(-)} \, - \, f_{k-(K+3),2(K+3)}^{(-)})\big]
\\[2mm]
& & \hspace{-15mm}
+ \,\ 
\big[(f_{-k+(K+3),2(K+3)}^{(+)} \, + \, f_{k-(K+3),2(K+3)}^{(+)})
+
(f_{-k+(K+3),2(K+3)}^{(-)} \, - \, f_{k-(K+3),2(K+3)}^{(-)})\big]
\Big\}
\end{eqnarray*}}
\end{enumerate}
\end{enumerate}
\end{note}

\medskip

\begin{prop} 
\label{prop:2025-725a}
For $\Lambda^{[K]}_{n_1,n_2} \in P^+_K(S^{\vee}_{(u=1)})$ such that 
$n_1 \in \zzz_{\rm odd}$ and $n_2 \in \zzz_{\rm even}$, 
the $ST^2S$- and $T$-transformation of 
$\overset{w}{R}{}^{(\pm)} \cdot \, 
\overset{w}{\rm ch}{}^{(\pm)}_{H(\Lambda_{n_1,n_2}^{[K]})}$
are given by the following formulas.

\begin{enumerate}
\item[{\rm 1)}]
\begin{enumerate}
\item[{\rm (i)}] \,\ $\Big(\overset{w}{R}{}^{(+)} \cdot \, 
\overset{w}{\rm ch}{}^{(+)}_{H(\Lambda_{n_1,n_2}^{[K]})}
\Big)\Big|_{ST^2S}
= \,\
- \, \Big[\dfrac{\gamma_{2(K+3)}}{4(K+3)}\Big]^2
\sum\limits_{\substack{0 \, < \, k_1, \, k_2 \, < \, 2(K+3) \\[1mm]
k_1 \, \in \, \zzz_{\rm odd} \\[1mm]
k_2 \, \in \, \zzz_{\rm even} }} \hspace{-5mm}
(-1)^{\frac12(n_1+n_2+k_1+k_2)} $
$$
\times \, \Big(
e^{-\frac{\pi i(n_1+k_1)^2}{8(K+3)}} \, 
+
e^{-\frac{\pi i(n_1-k_1)^2}{8(K+3)}}\Big) \, \Big(
e^{-\frac{\pi i(n_2+k_2)^2}{8(K+3)}} \, 
-
e^{-\frac{\pi i(n_2-k_2)^2}{8(K+3)}}\Big) \, 
\overset{w}{R}{}^{(+)} \cdot \, 
\overset{w}{\rm ch}{}^{(+)}_{H(\Lambda_{k_1,k_2}^{[K]})}
$$

\item[{\rm (ii)}] \,\ $\Big(\overset{w}{R}{}^{(-)} \cdot \, 
\overset{w}{\rm ch}{}^{(-)}_{H(\Lambda_{n_1,n_2}^{[K]})}
\Big)\Big|_{ST^2S}
= \,\
\Big[\dfrac{\gamma_{2(K+3)}}{4(K+3)}\Big]^2
\sum\limits_{\substack{0 \, < \, k_1, \, k_2 \, < \, 2(K+3) \\[1mm]
k_1 \, \in \, \zzz_{\rm odd} \\[1mm]
k_2 \, \in \, \zzz_{\rm even} }} $
$$
\times \, \Big(
e^{-\frac{\pi i(n_1+k_1)^2}{8(K+3)}} \, 
+
e^{-\frac{\pi i(n_1-k_1)^2}{8(K+3)}}\Big) \, \Big(
e^{-\frac{\pi i(n_2+k_2)^2}{8(K+3)}} \, 
-
e^{-\frac{\pi i(n_2-k_2)^2}{8(K+3)}}\Big) \, 
\overset{w}{R}{}^{(-)} \cdot \, 
\overset{w}{\rm ch}{}^{(-)}_{H(\Lambda_{k_1,k_2}^{[K]})}
$$
\end{enumerate}

\item[{\rm 2)}]
\begin{enumerate}
\item[{\rm (i)}] \,\ $\Big(\overset{w}{R}{}^{(+)} \cdot \, 
\overset{w}{\rm ch}{}^{(+)}_{H(\Lambda_{n_1,n_2}^{[K]})}
\Big)\Big|_{T}
= \,\
- \, (-1)^{\frac{K+3}{2}} \, 
e^{\frac{\pi i(n_1^2+n_2^2)}{4(K+3)}} \,\ 
\overset{w}{R}{}^{(-)} \cdot \, 
\overset{w}{\rm ch}{}^{(-)}_{H(\Lambda_{n_1,n_2}^{[K]})}$

\vspace{1mm}

\item[{\rm (ii)}] \,\ $\Big(\overset{w}{R}{}^{(-)} \cdot \, 
\overset{w}{\rm ch}{}^{(-)}_{H(\Lambda_{n_1,n_2}^{[K]})}
\Big)\Big|_{T}
= \,\
(-1)^{\frac{K+3}{2}} \, 
e^{\frac{\pi i(n_1^2+n_2^2)}{4(K+3)}} \,\ 
\overset{w}{R}{}^{(+)} \cdot \, 
\overset{w}{\rm ch}{}^{(+)}_{H(\Lambda_{n_1,n_2}^{[K]})}$
\end{enumerate}
\end{enumerate}
\end{prop}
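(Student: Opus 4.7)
The plan is to start from Proposition \ref{paper:prop:2025-1105b}, which expresses $4\,\overset{w}{R}{}^{(\pm)}\cdot\overset{w}{\rm ch}{}^{(\pm)}_{H(\Lambda_{n_1,n_2}^{[K]})}$ as a product of an $n_1$-factor and an $n_2$-factor in the functions $f^{(\pm)}_{j,2(K+3)}$, and then to apply the modular transformation formulas of Lemma \ref{lemma:2025-721b} and Note \ref{note:2025-724e} term by term. The hypothesis $n_1$ odd, $n_2$ even is a normalisation: by Lemma \ref{paper:lemma:2025-1031a}, $n_1-n_2\in\zzz_{\rm odd}$, and Note \ref{note:2025-1020b} swaps the two arguments when needed. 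Throughout I use the standing assumption $K\in\zzz_{\rm odd}$ (inherited from Note \ref{note:2025-724c}), so that $K+3$ is even and $(-1)^{(K+3)/2}$ in the statement is well defined.

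For part 2)(i), the $T$-transformation, I apply Lemma \ref{lemma:2025-721b} 1)(ii) to each factor $f^{(\pm)}_{\pm n_i+(K+3),2(K+3)}$ appearing in \eqref{eqn:2025-724b1}. With $m/2=K+3$ even, the quantity $j+m/2=\pm n_i+2(K+3)$ has the same parity as $n_i$, so the flip $f^{(\pm)}\mapsto f^{(\mp)}$ occurs uniformly across the $n_1$-factor (converting the $(+)$-type combination into a $(-)$-type one) but not across the $n_2$-factor. Since $(\pm n_i+(K+3))^2=(n_i-(K+3))^2$, both terms inside each factor pick up the common phase $e^{\pi i(n_i-(K+3))^2/(4(K+3))}$, and comparison with \eqref{eqn:2025-724c1} identifies the result with $4\,\overset{w}{R}{}^{(-)}\cdot\overset{w}{\rm ch}{}^{(-)}_{H(\Lambda_{n_1,n_2}^{[K]})}$ up to a scalar. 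Expanding $(n_i-(K+3))^2/(4(K+3))=n_i^2/(4(K+3))-n_i/2+(K+3)/4$, summing over $i=1,2$, and dividing by the prefactor $e^{\pi i(n_1-n_2)/2}$ of \eqref{eqn:2025-724c1} produces $e^{\pi i(n_1^2+n_2^2)/(4(K+3))}\cdot(-1)^{n_1}\cdot(-1)^{(K+3)/2}=-(-1)^{(K+3)/2}e^{\pi i(n_1^2+n_2^2)/(4(K+3))}$, using that $n_1$ is odd. Part 2)(ii) is symmetric.

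For the $ST^2S$-transformation in part 1)(i), I again begin from \eqref{eqn:2025-724b1} and apply Note \ref{note:2025-724e} 1)(i) to the $n_1$-factor (shape $A_{n_1}^{(+)}+A_{n_1}^{(-)}$ in the shorthand $A_j^{(+)}:=f^{(+)}_{-j+(K+3)}-f^{(+)}_{j-(K+3)}$, $A_j^{(-)}:=f^{(-)}_{-j+(K+3)}+f^{(-)}_{j-(K+3)}$) and Note \ref{note:2025-724e} 1)(ii) to the $n_2$-factor ($A_{n_2}^{(+)}-A_{n_2}^{(-)}$). Each transformed factor becomes a sum over $k_i\equiv n_i\pmod 2$, $0<k_i<2(K+3)$, of a two-term expression weighted by $e^{-\pi i(n_i+k_i)^2/(8(K+3))}$ and a sign $\epsilon_{n_ik_i}:=(-1)^{(n_i+k_i)/2}(-1)^{(K+3)/2}$. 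Expanding the product of the two transformed factors yields four bilinears in $(A_{k_1}^{(+)}\pm A_{k_1}^{(-)})(A_{k_2}^{(+)}\pm A_{k_2}^{(-)})$, and equations \eqref{eqn:2025-724b1}-\eqref{eqn:2025-724b4} identify each bilinear with $\pm 4\,\overset{w}{R}{}^{(+)}\cdot\overset{w}{\rm ch}{}^{(+)}_{H(\Lambda_{k_1',k_2'}^{[K]})}$ for one of the four related admissible weights of Proposition \ref{paper:prop:2025-1031a} 3).

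The main obstacle is the reindexing that collapses the four subsums into a single sum with coefficient $\overset{w}{R}{}^{(+)}\cdot\overset{w}{\rm ch}{}^{(+)}_{H(\Lambda_{k_1,k_2}^{[K]})}$. Substituting $k_i\mapsto 2(K+3)-k_i$ converts $e^{-\pi i(n_i+k_i)^2/(8(K+3))}$ into $e^{-\pi i(n_i-k_i)^2/(8(K+3))}$ times a phase $(-1)^{(n_i-k_i)/2}(-1)^{(K+3)/2}$, and this behaves differently according to the parity of $k_i$: for $k_1$ odd one has $(-1)^{(n_1-k_1)/2}=-(-1)^{(n_1+k_1)/2}$, whereas for $k_2$ even $(-1)^{(n_2-k_2)/2}=(-1)^{(n_2+k_2)/2}$. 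This sign discrepancy between the two directions is exactly what will force the asymmetric $+$ versus $-$ in the final formula, mirroring the relative minus sign between the $n_1$- and $n_2$-factors of \eqref{eqn:2025-724b1}. After the bookkeeping, the four contributions to the coefficient of $4\,\overset{w}{R}{}^{(+)}\cdot\overset{w}{\rm ch}{}^{(+)}_{H(\Lambda_{k_1,k_2}^{[K]})}$ assemble into $\epsilon_{n_1k_1}\epsilon_{n_2k_2}\cdot(E_1(k_1)+E_1'(k_1))(E_2(k_2)-E_2'(k_2))$ with $E_i(k_i)=e^{-\pi i(n_i+k_i)^2/(8(K+3))}$ and $E_i'(k_i)=e^{-\pi i(n_i-k_i)^2/(8(K+3))}$; using $\epsilon_{n_1k_1}\epsilon_{n_2k_2}=(-1)^{(n_1+n_2+k_1+k_2)/2}$ (valid since $K+3$ is even) together with the overall prefactor $\bigl[(-i\gamma_{2(K+3)})/(4(K+3))\bigr]^2=-\bigl[\gamma_{2(K+3)}/(4(K+3))\bigr]^2$ gives exactly the formula in 1)(i). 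Part 1)(ii) is proved in the same way starting from \eqref{eqn:2025-724c1} and using the corresponding parts of Note \ref{note:2025-724e}, with the disappearance of the overall $-1$ accounted for by the extra prefactor $e^{\pi i(n_1-n_2)/2}$ already present in the $(-)$-numerator.
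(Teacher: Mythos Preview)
Your proposal is correct and follows essentially the same route as the paper's proof: both start from Proposition \ref{paper:prop:2025-1105b}, apply Note \ref{note:2025-724e} (for $ST^2S$) or Lemma \ref{lemma:2025-721b} (for $T$) to each factor, identify the resulting four bilinears with the characters attached to the weights of Proposition \ref{paper:prop:2025-1031a} 3) via \eqref{eqn:2025-724b1}--\eqref{eqn:2025-724b4}, and then reindex using $k_i\mapsto 2(K+3)-k_i$ with the phase relation you state (which is the paper's \eqref{eqn:2025-726a}). Your observation that the parity asymmetry between $k_1$ and $k_2$ under this substitution produces the $+$ versus $-$ in the final coefficient is exactly the mechanism the paper exploits in its computation of $(\widetilde{A}_1)$--$(\widetilde{A}_4)$.
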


\begin{proof} 1) (i) \,\ By Proposition \ref{paper:prop:2025-1105b} 
and Note \ref{note:2025-724e}, we have 
{\allowdisplaybreaks
\begin{eqnarray*}
& & \hspace{-2mm} 
4 \, \Big(\overset{w}{R}{}^{(+)} \cdot \, 
\overset{w}{\rm ch}{}^{(+)}_{H(\Lambda_{n_1,n_2}^{[K]})}
\Big)\Big|_{ST^2S}
\\[2mm]
& & \hspace{-7mm}
= \Big\{
\big(f_{-n_1+(K+3),2(K+3)}^{(+)}-f_{n_1-(K+3),2(K+3)}^{(+)}\big)
+ 
\big(f_{-n_1+(K+3),2(K+3)}^{(-)}+f_{n_1-(K+3),2(K+3)}^{(-)}\big)\Big\}
\Big|_{ST^2S}
\\[2mm]
& & \hspace{-5mm}
\times \Big\{ 
\big(f_{-n_2+(K+3),2(K+3)}^{(+)}-f_{n_2-(K+3),2(K+3)}^{(+)}\big)
- 
\big(f_{-n_2+(K+3),2(K+3)}^{(-)}+f_{n_2-(K+3),2(K+3)}^{(-)}\big)\Big\}
\Big|_{ST^2S}
\\[3mm]
& & \hspace{-7mm}
= \,\ \Big[\frac{-i\gamma_{2(K+3)}}{4(K+3)}\Big]^2
\sum_{\substack{0 \, < \, k_1, \, k_2 \, < \, 2(K+3) \\[1mm]
k_1 \, \in \, \zzz_{\rm odd} \\[1mm]
k_2 \, \in \, \zzz_{\rm even} }}
e^{-\frac{\pi i}{8(K+3)}[(n_1+k_1)^2+(n_2+k_2)^2]}
\\[2mm]
& & \hspace{-3mm}
\times \Big\{
(-1)^{\frac{n_1+k_1}{2}}(-1)^{\frac{K+3}{2}}
\\[2mm]
& & \hspace{-0mm}
\times \big[
(f^{(+)}_{-k_1+(K+3), 2(K+3)}-f^{(+)}_{k_1-(K+3), 2(K+3)})
+
(f^{(-)}_{-k_1+(K+3), 2(K+3)}+f^{(-)}_{k_1-(K+3), 2(K+3)})\big]
\\[2mm]
& & \hspace{-0mm}
+ \big[
(f^{(+)}_{-k_1+(K+3), 2(K+3)}-f^{(+)}_{k_1-(K+3), 2(K+3)})
-
(f^{(-)}_{-k_1+(K+3), 2(K+3)}+f^{(-)}_{k_1-(K+3), 2(K+3)})\big]
\Big\}
\\[2mm] %
& & \hspace{-3mm}
\times \Big\{
(-1)^{\frac{n_2+k_2}{2}}(-1)^{\frac{K+3}{2}}
\\[2mm]
& & \hspace{-0mm}
\times \big[
(f^{(+)}_{-k_2+(K+3), 2(K+3)}-f^{(+)}_{k_2-(K+3), 2(K+3)})
-
(f^{(-)}_{-k_2+(K+3), 2(K+3)}+f^{(-)}_{k_2-(K+3), 2(K+3)})\big]
\\[2mm]
& & \hspace{-0mm}
+ \big[
(f^{(+)}_{-k_2+(K+3), 2(K+3)}-f^{(+)}_{k_2-(K+3), 2(K+3)})
+
(f^{(-)}_{-k_2+(K+3), 2(K+3)}+f^{(-)}_{k_2-(K+3), 2(K+3)})\big]
\Big\}
\\[2mm] 
& & \hspace{-7mm}
= \,\ \Big[\frac{-i\gamma_{2(K+3)}}{4(K+3)}\Big]^2
\sum_{\substack{0 \, < \, k_1, \, k_2 \, < \, 2(K+3) \\[1mm]
k_1 \, \in \, \zzz_{\rm odd} \\[1mm]
k_2 \, \in \, \zzz_{\rm even} }}
e^{-\frac{\pi i}{8(K+3)}[(n_1+k_1)^2+(n_2+k_2)^2]} \, 
\Big\{ (A_1)+(A_2)+(A_3)+(A_4) \Big\}
\end{eqnarray*}}
where 
{\allowdisplaybreaks
\begin{eqnarray*}
& & \hspace{-10mm}
(A_1) \, := \,\ (-1)^{\frac{n_1+k_1}{2}}(-1)^{\frac{K+3}{2}}
\cdot 
(-1)^{\frac{n_2+k_2}{2}}(-1)^{\frac{K+3}{2}}
\\[2mm]
& & \hspace{-0mm}
\times \big[
(f^{(+)}_{-k_1+(K+3), 2(K+3)}-f^{(+)}_{k_1-(K+3), 2(K+3)})
+
(f^{(-)}_{-k_1+(K+3), 2(K+3)}+f^{(-)}_{k_1-(K+3), 2(K+3)})\big]
\\[2mm]
& & \hspace{-0mm}
\times \big[
(f^{(+)}_{-k_2+(K+3), 2(K+3)}-f^{(+)}_{k_2-(K+3), 2(K+3)})
-
(f^{(-)}_{-k_2+(K+3), 2(K+3)}+f^{(-)}_{k_2-(K+3), 2(K+3)})\big]
\\[2mm]
&=&
(-1)^{\frac12(n_1+n_2+k_1+k_2)} 
\, \times \, 4 \, 
\overset{w}{R}{}^{(+)} \cdot \, 
\overset{w}{\rm ch}{}^{(+)}_{H(\Lambda_{k_1,k_2}^{[K]})}
\\[2mm]
& & \hspace{-10mm}
(A_2) \, := \,\ (-1)^{\frac{n_1+k_1}{2}}(-1)^{\frac{K+3}{2}}
\\[2mm]
& & \hspace{-0mm}
\times \big[
(f^{(+)}_{-k_1+(K+3), 2(K+3)}-f^{(+)}_{k_1-(K+3), 2(K+3)})
+
(f^{(-)}_{-k_1+(K+3), 2(K+3)}+f^{(-)}_{k_1-(K+3), 2(K+3)})\big]
\\[2mm]
& & \hspace{-0mm}
\times \big[
(f^{(+)}_{-k_2+(K+3), 2(K+3)}-f^{(+)}_{k_2-(K+3), 2(K+3)})
+
(f^{(-)}_{-k_2+(K+3), 2(K+3)}+f^{(-)}_{k_2-(K+3), 2(K+3)})\big]
\\[2mm]
&=&
(-1)^{\frac{n_1+k_1}{2}}(-1)^{\frac{K+3}{2}}
\, \times \, (-4) \, 
\overset{w}{R}{}^{(+)} \cdot \, 
\overset{w}{\rm ch}{}^{(+)}_{H(\Lambda_{k_1,2(K+3)-k_2}^{[K]})}
\\[2mm]
& & \hspace{-10mm}
(A_3) \, := \,\ (-1)^{\frac{n_2+k_2}{2}}(-1)^{\frac{K+3}{2}}
\\[2mm]
& & \hspace{-0mm}
\times \big[
(f^{(+)}_{-k_1+(K+3), 2(K+3)}-f^{(+)}_{k_1-(K+3), 2(K+3)})
-
(f^{(-)}_{-k_1+(K+3), 2(K+3)}+f^{(-)}_{k_1-(K+3), 2(K+3)})\big]
\\[2mm]
& & \hspace{-0mm}
\times \big[
(f^{(+)}_{-k_2+(K+3), 2(K+3)}-f^{(+)}_{k_2-(K+3), 2(K+3)})
-
(f^{(-)}_{-k_2+(K+3), 2(K+3)}+f^{(-)}_{k_2-(K+3), 2(K+3)})\big]
\\[2mm]
&=&
(-1)^{\frac{n_2+k_2}{2}}(-1)^{\frac{K+3}{2}}
\, \times \, (-4) \, 
\overset{w}{R}{}^{(+)} \cdot \, 
\overset{w}{\rm ch}{}^{(+)}_{H(\Lambda_{2(K+3)-k_1,k_2}^{[K]})}
\\[2mm]
& & \hspace{-10mm}
(A_4) \, := \, \big[
(f^{(+)}_{-k_1+(K+3), 2(K+3)}-f^{(+)}_{k_1-(K+3), 2(K+3)})
-
(f^{(-)}_{-k_1+(K+3), 2(K+3)}+f^{(-)}_{k_1-(K+3), 2(K+3)})\big]
\\[2mm]
& & \hspace{-0mm}
\times \big[
(f^{(+)}_{-k_2+(K+3), 2(K+3)}-f^{(+)}_{k_2-(K+3), 2(K+3)})
+
(f^{(-)}_{-k_2+(K+3), 2(K+3)}+f^{(-)}_{k_2-(K+3), 2(K+3)})\big]
\\[2mm]
&=&
4 \,\ \overset{w}{R}{}^{(+)} \cdot \, 
\overset{w}{\rm ch}{}^{(+)}_{H(\Lambda_{2(K+3)-k_1,2(K+3)-k_2}^{[K]})}
\end{eqnarray*}}
Putting 
$$
(\widetilde{A}_j) \, := \,\ \frac14 \, 
\sum_{\substack{0 \, < \, k_1, \, k_2 \, < \, 2(K+3) \\[1mm]
k_1 \, \in \, \zzz_{\rm odd} \\[1mm]
k_2 \, \in \, \zzz_{\rm even} }}
e^{-\frac{\pi i}{8(K+3)}[(n_1+k_1)^2+(n_2+k_2)^2]} \, 
\times \, (A_j)
$$
we have 
$$
\Big(\overset{w}{R}{}^{(+)} \cdot \, 
\overset{w}{\rm ch}{}^{(+)}_{H(\Lambda_{n_1,n_2}^{[K]})}
\Big)\Big|_{ST^2S}
\, = \,\ \Big[\frac{-i\gamma_{2(K+3)}}{4(K+3)}\Big]^2
\, \times \, 
\Big\{(\widetilde{A}_1)+(\widetilde{A}_2)
+(\widetilde{A}_3)+(\widetilde{A}_4)\Big\}
$$
Below we are going to compute $(\widetilde{A}_1) \sim (\widetilde{A}_4)$
by putting $k'_j =2(K+3)-k_j$ \,\ $(j=1,2)$ at some suitable places.
By this putting, one has
$$
\frac{(n_j+k_j)^2}{8(K+3)} \,\ = \,\ \frac{(n_j+2(K+3)-k'_j)^2}{8(K+3)}
\, = \,\
\frac{(n_j-k'_j)^2}{8(K+3)}
-\frac{n_j-k'_j}{2}+\frac{K+3}{2}
$$
and so 
\begin{equation}
\left.
\begin{array}{l}
k_j =2(K+3)-k'_j \\[0mm]
n_j-k_j \, \in \, 2 \zzz
\end{array}\right\}
\,\ \Longrightarrow \quad 
e^{-\frac{\pi i(n_j+k_j)^2}{8(K+3)}}
\,\ = \,\ 
e^{-\frac{\pi i(n_j-k'_j)^2}{8(K+3)}} \, (-1)^{\frac{n_j-k'_j}{2}+\frac{K+3}{2}}
\label{eqn:2025-726a}
\end{equation}
Then $(\widetilde{A}_1) \sim (\widetilde{A}_4)$ become as follows:
{\allowdisplaybreaks
\begin{eqnarray*}
& & \hspace{-5mm}
(\widetilde{A}_1) \, = 
\sum_{\substack{0 \, < \, k_1, \, k_2 \, < \, 2(K+3) \\[1mm]
k_1 \, \in \, \zzz_{\rm odd} \\[1mm]
k_2 \, \in \, \zzz{\rm even} }} \hspace{-5mm}
e^{-\frac{\pi i(n_1+k_1)^2}{8(K+3)}} \, 
e^{-\frac{\pi i(n_2+k_2)^2}{8(K+3)}} \, 
(-1)^{\frac12(n_1+n_2+k_1+k_2)} 
\, \times \, 
\overset{w}{R}{}^{(+)} \cdot \, 
\overset{w}{\rm ch}{}^{(+)}_{H(\Lambda_{k_1,k_2}^{[K]})}
\\[2mm]
& & \hspace{-5mm}
(\widetilde{A}_2) \, = \,\ - \hspace{-5mm}
\sum_{\substack{0 \, < \, k_1, \, k_2 \, < \, 2(K+3) \\[1mm]
k_1 \, \in \, \zzz_{\rm odd} \\[1mm]
k_2 \, \in \, \zzz_{\rm even} }} \hspace{-5mm}
e^{-\frac{\pi i(n_1+k_1)^2}{8(K+3)}} \, 
e^{-\frac{\pi i(n_2+k_2)^2}{8(K+3)}} \, 
(-1)^{\frac{n_1+k_1}{2}}(-1)^{\frac{m+3}{2}} 
\, \times \, 
\overset{w}{R}{}^{(+)} \cdot \, 
\overset{w}{\rm ch}{}^{(+)}_{H(\Lambda_{k_1,2(K+3)-k_2}^{[K]})}
\\[2mm]
& &
= \,\ - \hspace{-5mm}
\sum_{\substack{0 \, < \, k_1, \, k'_2 \, < \, 2(K+3) \\[1mm]
k_1 \, \in \, \zzz_{\rm odd} \\[1mm]
k'_2 \, \in \, \zzz_{\rm even} }} \hspace{-5mm}
e^{-\frac{\pi i(n_1+k_1)^2}{8(K+3)}} \, 
e^{-\frac{(n_2-k'_2)^2}{8(K+3)}}
(-1)^{\frac{n_2-k'_2}{2}}
(-1)^{\frac{K+3}{2}}
(-1)^{\frac{n_1+k_1}{2}}(-1)^{\frac{K+3}{2}} 
\\[-10mm]
& & \hspace{50mm}
\times \,\ 
\overset{w}{R}{}^{(+)} \cdot \, 
\overset{w}{\rm ch}{}^{(+)}_{H(\Lambda_{k_1,k'_2}^{[K]})}
\\[2mm]
& &
= \,\ - \hspace{-5mm}
\sum_{\substack{0 \, < \, k_1, \, k'_2 \, < \, 2(K+3) \\[1mm]
k_1 \, \in \, \zzz_{\rm odd} \\[1mm]
k'_2 \, \in \, \zzz_{\rm even} }} \hspace{-5mm}
e^{-\frac{\pi i(n_1+k_1)^2}{8(K+3)}} \, 
e^{-\frac{(n_2-k'_2)^2}{8(K+3)}} \, 
(-1)^{\frac{n_1+k_1}{2}}
(-1)^{\frac{n_2+k'_2}{2}} 
\, \times \, 
\overset{w}{R}{}^{(+)} \cdot \, 
\overset{w}{\rm ch}{}^{(+)}_{H(\Lambda_{k_1,k'_2}^{[K]})}
\\[2mm]
& & \hspace{-5mm}
(\widetilde{A}_3) \, = \,\ - \hspace{-5mm}
\sum_{\substack{0 \, < \, k_1, \, k_2 \, < \, 2(K+3) \\[1mm]
k_1 \, \in \, \zzz_{\rm odd} \\[1mm]
k_2 \, \in \, \zzz_{\rm even} }} \hspace{-5mm}
e^{-\frac{\pi i(n_1+k_1)^2}{8(K+3)}} \, 
e^{-\frac{\pi i(n_2+k_2)^2}{8(K+3)}} \, 
(-1)^{\frac{n_2+k_2}{2}}(-1)^{\frac{K+3}{2}} 
\, \times \, 
\overset{w}{R}{}^{(+)} \cdot \, 
\overset{w}{\rm ch}{}^{(+)}_{H(\Lambda_{2(m+3)-k_1,k_2}^{[m]})}
\\[2mm]
& & 
= \,\ - \hspace{-5mm}
\sum_{\substack{0 \, < \, k'_1, \, k_2 \, < \, 2(K+3) \\[1mm]
k'_1 \, \in \, \zzz_{\rm odd} \\[1mm]
k_2 \, \in \, \zzz_{\rm even} }} \hspace{-5mm}
e^{-\frac{(n_1-k'_1)^2}{8(K+3)}} \, 
(-1)^{\frac{n_1-k'_1}{2}}
(-1)^{\frac{K+3}{2}}
e^{-\frac{\pi i(n_2+k_2)^2}{8(K+3)}} \, 
(-1)^{\frac{n_2+k_2}{2}}(-1)^{\frac{K+3}{2}} 
\\[-10mm]
& & \hspace{50mm}
\times \,\ 
\overset{w}{R}{}^{(+)} \cdot \, 
\overset{w}{\rm ch}{}^{(+)}_{H(\Lambda_{k'_1,k_2}^{[K]})}
\\[2mm]
& &
= \,\ 
\sum_{\substack{0 \, < \, k'_1, \, k_2 \, < \, 2(K+3) \\[1mm]
k'_1 \, \in \, \zzz_{\rm odd} \\[1mm]
k_2 \, \in \, \zzz_{\rm even} }} \hspace{-5mm}
e^{-\frac{\pi i(n_1-k'_1)^2}{8(K+3)}} \, 
e^{-\frac{(n_2+k_2)^2}{8(K+3)}} \, 
(-1)^{\frac{n_1+k'_1}{2}}
(-1)^{\frac{n_2+k_2}{2}} 
\, \times \, 
\overset{w}{R}{}^{(+)} \cdot \, 
\overset{w}{\rm ch}{}^{(+)}_{H(\Lambda_{k'_1,k_2}^{[K]})}
\\[2mm]
& & \hspace{-5mm}
(\widetilde{A}_4) \, =
\sum_{\substack{0 \, < \, k_1, \, k_2 \, < \, 2(K+3) \\[1mm]
k_1 \, \in \, \zzz_{\rm odd} \\[1mm]
k_2 \, \in \, \zzz_{\rm even} }} \hspace{-5mm}
e^{-\frac{\pi i(n_1+k_1)^2}{8(K+3)}} \, 
e^{-\frac{\pi i(n_2+k_2)^2}{8(K+3)}} 
\, \times \, 
\overset{w}{R}{}^{(+)} \cdot \, 
\overset{w}{\rm ch}{}^{(+)}_{H(\Lambda_{2(K+3)-k_1,2(K+3)-k_2}^{[K]})}
\\[2mm]
& & 
= \,\ 
\sum_{\substack{0 \, < \, k'_1, \, k_2 \, < \, 2(K+3) \\[1mm]
k'_1 \, \in \, \zzz_{\rm odd} \\[1mm]
k'_2 \, \in \, \zzz_{\rm even} }} \hspace{-5mm}
e^{-\frac{(n_1-k'_1)^2}{8(K+3)}} \, 
(-1)^{\frac{n_1-k'_1}{2}}
(-1)^{\frac{K+3}{2}}
e^{-\frac{(n_2-k'_2)^2}{8(K+3)}} \, 
(-1)^{\frac{n_2-k'_2}{2}}
(-1)^{\frac{K+3}{2}}
\\[-10mm]
& & \hspace{50mm}
\times \,\ 
\overset{w}{R}{}^{(+)} \cdot \, 
\overset{w}{\rm ch}{}^{(+)}_{H(\Lambda_{k'_1,k'_2}^{[K]})}
\\[2mm]
& &
= \,\ - \hspace{-5mm}
\sum_{\substack{0 \, < \, k'_1, \, k'_2 \, < \, 2(K+3) \\[1mm]
k'_1 \, \in \, \zzz_{\rm odd} \\[1mm]
k'_2 \, \in \, \zzz_{\rm even} }} \hspace{-5mm}
e^{-\frac{\pi i(n_1-k'_1)^2}{8(K+3)}} \, 
e^{-\frac{(n_2-k'_2)^2}{8(K+3)}} \, 
(-1)^{\frac{n_1+k'_1}{2}}
(-1)^{\frac{n_2+k'_2}{2}} 
\, \times \, 
\overset{w}{R}{}^{(+)} \cdot \, 
\overset{w}{\rm ch}{}^{(+)}_{H(\Lambda_{k'_1,k'_2}^{[K]})}
\end{eqnarray*}}
Thus we have 
{\allowdisplaybreaks
\begin{eqnarray*}
& & \hspace{-10mm}
\Big(\overset{w}{R}{}^{(+)} \cdot \, 
\overset{w}{\rm ch}{}^{(+)}_{H(\Lambda_{n_1,n_2}^{[K]})}
\Big)\Big|_{ST^2S}
\, = \,\ \Big[\frac{-i\gamma_{2(m+3)}}{4(m+3)}\Big]^2
\, \times \, 
\Big\{(\widetilde{A}_1)+(\widetilde{A}_2)
+(\widetilde{A}_3)+(\widetilde{A}_4)\Big\}
\\[2mm]
& & \hspace{-8mm}
= \,\
\Big[\frac{-i\gamma_{2(K+3)}}{4(K+3)}\Big]^2
\sum_{\substack{0 \, < \, k_1, \, k_2 \, < \, 2(K+3) \\[1mm]
k_1 \, \in \, \zzz_{\rm odd} \\[1mm]
k_2 \, \in \, \zzz_{\rm even} }} \hspace{-5mm}
(-1)^{\frac12(n_1+n_2+k_1+k_2)}
\\[2mm]
& & \hspace{-5mm}
\times \, \Big\{
e^{-\frac{\pi i(n_1+k_1)^2}{8(K+3)}} \, 
e^{-\frac{\pi i(n_2+k_2)^2}{8(K+3)}}
\, - \, 
e^{-\frac{\pi i(n_1+k_1)^2}{8(K+3)}} \, 
e^{-\frac{\pi i(n_2-k_2)^2}{8(K+3)}}
\\[2mm]
& &
\, + \, 
e^{-\frac{\pi i(n_1-k_1)^2}{8(K+3)}} \, 
e^{-\frac{\pi i(n_2+k_2)^2}{8(K+3)}}
\, - \, 
e^{-\frac{\pi i(n_1-k_1)^2}{8(K+3)}} \, 
e^{-\frac{\pi i(n_2-k_2)^2}{8(K+3)}} \Big\} 
\, \times \, 
\overset{w}{R}{}^{(+)} \cdot \, 
\overset{w}{\rm ch}{}^{(+)}_{H(\Lambda_{k_1,k_2}^{[K]})}
\\[2mm]
& & \hspace{-8mm}
= \,\
\Big[\frac{-i\gamma_{2(K+3)}}{4(K+3)}\Big]^2
\sum_{\substack{0 \, < \, k_1, \, k_2 \, < \, 2(K+3) \\[1mm]
k_1 \, \in \, \zzz_{\rm odd} \\[1mm]
k_2 \, \in \, \zzz_{\rm even} }} \hspace{-5mm}
(-1)^{\frac12(n_1+n_2+k_1+k_2)}
\\[2mm]
& & \hspace{-5mm}
\times \, \Big(
e^{-\frac{\pi i(n_1+k_1)^2}{8(K+3)}} \, 
+
e^{-\frac{\pi i(n_1-k_1)^2}{8(K+3)}}\Big) \, \Big(
e^{-\frac{\pi i(n_2+k_2)^2}{8(K+3)}} \, 
-
e^{-\frac{\pi i(n_2-k_2)^2}{8(K+3)}}\Big) \, 
\overset{w}{R}{}^{(+)} \cdot \, 
\overset{w}{\rm ch}{}^{(+)}_{H(\Lambda_{k_1,k_2}^{[K]})}
\end{eqnarray*}}
proving (i). \, The formula in (ii) is obtained by similar calculation.

\medskip

\noindent
2) (i) \,\ By Proposition \ref{paper:prop:2025-1105b} and Lemma 
\ref{lemma:2025-721b}, we have 
{\allowdisplaybreaks
\begin{eqnarray*}
& & \hspace{-5mm}
4 \, \Big(\overset{w}{R}{}^{(+)} \cdot \, 
\overset{w}{\rm ch}{}^{(+)}_{H(\Lambda_{n_1,n_2}^{[K]})}
\Big)\Big|_{T}
\\[2mm]
& & \hspace{-7mm}
= \Big\{
\big(f_{-n_1+(K+3),2(K+3)}^{(+)}-f_{n_1-(K+3),2(K+3)}^{(+)}\big)
+ 
\big(f_{-n_1+(K+3),2(K+3)}^{(-)}+f_{n_1-(K+3),2(K+3)}^{(-)}\big)\Big\}
\Big|_T
\\[2mm]
& & \hspace{-5mm}
\times \Big\{ 
\big(f_{-n_2+(K+3),2(K+3)}^{(+)}-f_{n_2-(K+3),2(K+3)}^{(+)}\big)
- 
\big(f_{-n_2+(K+3),2(K+3)}^{(-)}+f_{n_2-(K+3),2(K+3)}^{(-)}\big)\Big\}
\Big|_T
\\[3mm]
& & \hspace{-7mm}
= \,\ 
e^{\frac{\pi i}{4(K+3)}\{[n_1-(K+3)]^2+[n_2-(K+3)]^2\} }
\\[2mm]
& & \hspace{-5mm}
\times \Big\{
\big(f_{-n_1+(K+3),2(K+3)}^{(-)}-f_{n_1-(K+3),2(K+3)}^{(-)}\big)
+ 
\big(f_{-n_1+(K+3),2(K+3)}^{(+)}+f_{n_1-(K+3),2(K+3)}^{(+)}\big)\Big\}
\\[2mm]
& & \hspace{-5mm}
\times \Big\{ 
\big(f_{-n_2+(K+3),2(K+3)}^{(+)}-f_{n_2-(K+3),2(K+3)}^{(+)}\big)
- 
\big(f_{-n_2+(K+3),2(K+3)}^{(-)}+f_{n_2-(K+3),2(K+3)}^{(-)}\big)\Big\}
\\[3mm]
& & \hspace{-7mm}
= \,\ 
e^{\frac{\pi i}{4(K+3)}\{[n_1-(K+3)]^2+[n_2-(K+3)]^2\} }
e^{-\frac{\pi i}{2}(n_1-n_2)}
\, \times \, 
4 \, \overset{w}{R}{}^{(-)} \cdot \, 
\overset{w}{\rm ch}{}^{(-)}_{H(\Lambda_{n_1,n_2}^{[K]})}
\\[0mm]
& & \hspace{-7mm}
= \,\ - \, 
(-1)^{\frac{K+3}{2}} \, 
e^{\frac{\pi i}{4(K+3)}(n_1^2+n_2^2)}
\, \times \, 
4 \, \overset{w}{R}{}^{(-)} \cdot \, 
\overset{w}{\rm ch}{}^{(-)}_{H(\Lambda_{n_1,n_2}^{[K]})} 
\end{eqnarray*}}
proving (i). \, The formula in (ii) is obtained by similar calculation.
\end{proof}

\medskip

As to the denominators $\overset{w}{R}{}^{(\pm)}$ of the QHR 
in the case $C_2^{(1)}$ and $f=e_{-\theta}$, they are very easily
obtained from \eqref{paper:eqn:2025-1105a1} and 
\eqref{paper:eqn:2025-1105a2} and Lemma 
\ref{paper:lemma:2025-1105a}, since $\ell=2$, 
$\dim \overline{\ggg}_0=4$, $\dim \overline{\ggg}_{\frac12}=2$, 
$\dim \overline{\ggg}^f=6$, 
and $\overline{\Delta}_0^+ = \{\alpha_2 \}$ and 
$\overline{\Delta}_{\frac12} = \{\alpha_1, \, \alpha_1+\alpha_2 \}$.

\medskip

\begin{prop} 
\label{prop:2025-727c}
In the case $C_2^{(1)}$ and $f=e_{-\theta}$, the denominators of 
quantum Hamiltonian reduction and their modular transformation 
are given by the following formulas:
\begin{enumerate}
\item[{\rm 1)}]
\begin{enumerate}
\item[{\rm (i)}] \,\ $\overset{w}{R}{}^{(+)}(\tau,z,t)
\, = \,\ 
\widetilde{\vartheta}_{11}(\tau, 2z,t) \, 
\widetilde{\vartheta}_{01}(\tau,z,t)
\,\ = \,\ 
e^{4\pi it} \, 
\vartheta_{11}(\tau, 2z) \, 
\vartheta_{01}(\tau,z)$

\item[{\rm (ii)}] \,\ $\overset{w}{R}{}^{(-)}(\tau,z,t)
\, = \,\ 
\widetilde{\vartheta}_{11}(\tau, 2z,t) \, 
\widetilde{\vartheta}_{00}(\tau,z,t)
\,\ = \,\ 
e^{4\pi it} \, 
\vartheta_{11}(\tau, 2z) \, 
\vartheta_{00}(\tau,z)$
\end{enumerate}

\item[{\rm 2)}] 
\begin{enumerate}
\item[{\rm (i)}] \,\ $\overset{w}{R}{}^{(+)}|_{ST^2S}(\tau,z,t)
\, = \,\ 
- \, \overset{w}{R}{}^{(+)}(\tau,z,t)$ 

\item[{\rm (ii)}] \,\ $\overset{w}{R}{}^{(-)}|_{ST^2S}(\tau,z,t)
\, = \,\ 
i \, \overset{w}{R}{}^{(-)}(\tau,z,t)$ 

\item[{\rm (iii)}] \,\ $\overset{w}{R}{}^{(\pm)}|_{T}(\tau,z,t)
\, = \,\ 
e^{\frac{\pi i}{4}} \, \overset{w}{R}{}^{(\mp)}(\tau,z,t)$ 
\end{enumerate}
\end{enumerate}
\end{prop}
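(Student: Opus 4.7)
The plan is to treat the two parts separately, each as a direct specialization of the general formulas set up in Section~\ref{sec:QHR:characters}.

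For part~1), I would substitute the $C_2$-data with $f = e_{-\theta}$ into the general denominator formulas \eqref{paper:eqn:2025-1105a1} and \eqref{paper:eqn:2025-1105a2}. The relevant combinatorial invariants are $\ell = 2$, $\overline{\Delta}_0^+ = \{\alpha_2\}$, $\overline{\Delta}_{\frac12} = \{\alpha_1,\, \alpha_1+\alpha_2\}$, and $\dim \overline{\ggg}^f = 6$, so that the $\widetilde{\eta}$-exponent $\tfrac{3}{2}\ell - \tfrac{1}{2}\dim \overline{\ggg}^f$ becomes $0$ and the $\widetilde{\eta}$-factor drops out entirely. Evaluating at $H = z\alpha_2 \in \overline{\hhh}^f$ gives $\alpha_2(H) = 2z$, $\alpha_1(H) = -z$, and $(\alpha_1+\alpha_2)(H) = z$. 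Since $\widetilde{\vartheta}_{00}$ and $\widetilde{\vartheta}_{01}$ are even in their $z$-argument, the two $\overline{\Delta}_{\frac12}$-factors collapse to a single $\widetilde{\vartheta}_{0b}(\tau, z, t)$, yielding the claimed products. The alternative forms $e^{4\pi it}\,\vartheta_{11}(\tau,2z)\,\vartheta_{0b}(\tau,z)$ then follow by extracting the $e^{2\pi it}$-factors from each $\widetilde{\vartheta}$ via the definition \eqref{paper:eqn:2025-1102a}.

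For part~2), I would simply specialise Lemma \ref{paper:lemma:2025-1105a} at $\dim \overline{\ggg}_0 = 4$ and $\dim \overline{\ggg}^f = 6$. The three prefactors to check are
$e^{\frac{\pi i}{6}(\dim \overline{\ggg}^f - 3\dim \overline{\ggg}_0)} = e^{-\pi i} = -1$ for 2)(i),
$e^{-\frac{\pi i}{12}(\dim \overline{\ggg}^f + 3\dim \overline{\ggg}_0)} = e^{-3\pi i/2} = i$ for 2)(ii), and
$e^{\frac{\pi i}{24}(3\dim \overline{\ggg}_0 - \dim \overline{\ggg}^f)} = e^{\pi i/4}$ for 2)(iii), which are exactly the scalars appearing in the statement.

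The only potentially delicate point is the interaction of the doubled argument $2z$ in $\widetilde{\vartheta}_{11}(\tau,2z,t)$ with the $ST^2S$-action, whose $t$-shift $-\tfrac{c(z|z)}{2(c\tau+d)}$ is measured with respect to the inner product on $\overline{\hhh}^f$ for which $|\alpha_2|^2 = 2$. This bookkeeping is however already built into Lemma \ref{paper:lemma:2025-1105a} through the intrinsic variable $H \in \overline{\hhh}^f$, so no separate check is required; the whole proposition is routine substitution once Section~\ref{sec:QHR:characters} is in hand.
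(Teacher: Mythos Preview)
Your proposal is correct and mirrors the paper's own argument exactly: the paper simply records the data $\ell=2$, $\dim\overline{\ggg}_0=4$, $\dim\overline{\ggg}_{\frac12}=2$, $\dim\overline{\ggg}^f=6$, $\overline{\Delta}_0^+=\{\alpha_2\}$, $\overline{\Delta}_{\frac12}=\{\alpha_1,\alpha_1+\alpha_2\}$ and declares the result an easy consequence of \eqref{paper:eqn:2025-1105a1}, \eqref{paper:eqn:2025-1105a2} and Lemma~\ref{paper:lemma:2025-1105a}. Your write-up actually supplies more detail than the paper (the evenness-of-$\vartheta_{0b}$ step and the explicit scalar checks), which is all to the good.
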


\medskip

By Propositions \ref{paper:prop:2025-1105b} and 
\ref{prop:2025-725a} and \ref{prop:2025-727c}, the characters 
of quantum Hamiltonian reduction and their modular transformation 
are obtained as follows:

\medskip

\begin{prop} \,\ 
\label{paper:prop:2025-1105c}
Under the same assumption with Proposition \ref{prop:2025-725a},
the following formulas hold:
\begin{enumerate}
\item[{\rm 1)}]
\begin{enumerate}
\item[{\rm (i)}] \,\ $
\overset{w}{\rm ch}{}^{(+)}_{H(\Lambda_{n_1,n_2}^{[K]})}
(\tau,z\alpha_2,t)$
{\allowdisplaybreaks
\begin{eqnarray*}
& & \hspace{-20mm}
= \,\ \Big\{
\big(f_{-n_1+(K+3),2(K+3)}^{(-)}+f_{n_1-(K+3),2(K+3)}^{(-)}\big)
+ 
\big(f_{-n_1+(K+3),2(K+3)}^{(+)}-f_{n_1-(K+3),2(K+3)}^{(+)}\big)\Big\}
\\[2mm]
& & \hspace{-18mm}
\times \, \Big\{
- \big(f_{-n_2+(K+3),2(K+3)}^{(-)}+f_{n_2-(K+3),2(K+3)}^{(-)}\big)
+ 
\big(f_{-n_2+(K+3),2(K+3)}^{(+)}-f_{n_2-(K+3),2(K+3)}^{(+)}\big)\Big\}
\\[2mm]
& & \hspace{-18mm}
/ \,\ 4 \, \widetilde{\vartheta}_{11}(\tau, 2z,t) \, 
\widetilde{\vartheta}_{01}(\tau, z,t) 
\end{eqnarray*}}

\item[{\rm (ii)}] \,\ $
\overset{w}{\rm ch}{}^{(-)}_{H(\Lambda_{n_1,n_2}^{[K]})}
(\tau,z\alpha_2,t) 
\,\ = \,\ e^{\frac{\pi i}{2}(n_1-n_2)}$
{\allowdisplaybreaks
\begin{eqnarray*}
& & \hspace{-20mm}
\times \, \Big\{
\big(f_{-n_1+(K+3),2(K+3)}^{(-)}-f_{n_1-(K+3),2(K+3)}^{(-)}\big)
+ 
\big(f_{-n_1+(K+3),2(K+3)}^{(+)}+f_{n_1-(K+3),2(K+3)}^{(+)}\big)\Big\}
\nonumber
\\[2mm]
& & \hspace{-20mm}
\times \, \Big\{
- 
\big(f_{-n_2+(K+3),2(K+3)}^{(-)}
+ 
f_{n_2-(K+3),2(K+3)}^{(-)}\big)
+ 
\big(f_{-n_2+(K+3),2(K+3)}^{(+)}
- 
f_{n_2-(K+3),2(K+3)}^{(+)}\big)\Big\}
\\[2mm]
& & \hspace{-18mm}
/ \,\ 4 \, \widetilde{\vartheta}_{11}(\tau, 2z,t) \, 
\widetilde{\vartheta}_{00}(\tau, z,t) 
\end{eqnarray*}}
\end{enumerate}

\vspace{1mm}
\item[{\rm 2)}]
\begin{enumerate}
\item[{\rm (i)}] \,\ $
\overset{w}{\rm ch}{}^{(+)}_{H(\Lambda_{n_1,n_2}^{[K]})}
\Big|_{ST^2S}
= \,\
\Big[\dfrac{\gamma_{2(K+3)}}{4(K+3)}\Big]^2
\sum\limits_{\substack{0 \, < \, k_1, \, k_2 \, < \, 2(K+3) \\[1mm]
k_1 \, \in \, \zzz_{\rm odd} \\[1mm]
k_2 \, \in \, \zzz_{\rm even} }} \hspace{-5mm}
(-1)^{\frac12(n_1+n_2+k_1+k_2)} $
$$
\times \, \Big(
e^{-\frac{\pi i(n_1+k_1)^2}{8(K+3)}} \, 
+
e^{-\frac{\pi i(n_1-k_1)^2}{8(K+3)}}\Big) \, \Big(
e^{-\frac{\pi i(n_2+k_2)^2}{8(K+3)}} \, 
-
e^{-\frac{\pi i(n_2-k_2)^2}{8(K+3)}}\Big) \, 
\overset{w}{\rm ch}{}^{(+)}_{H(\Lambda_{k_1,k_2}^{[K]})}
$$

\item[{\rm (ii)}] \,\ $
\overset{w}{\rm ch}{}^{(-)}_{H(\Lambda_{n_1,n_2}^{[K]})}
\Big|_{ST^2S}
= \,\ - \, i \, 
\Big[\dfrac{\gamma_{2(K+3)}}{4(K+3)}\Big]^2
\sum\limits_{\substack{0 \, < \, k_1, \, k_2 \, < \, 2(K+3) \\[1mm]
k_1 \, \in \, \zzz_{\rm odd} \\[1mm]
k_2 \, \in \, \zzz_{\rm even} }} $
$$
\times \, \Big(
e^{-\frac{\pi i(n_1+k_1)^2}{8(K+3)}} \, 
+
e^{-\frac{\pi i(n_1-k_1)^2}{8(K+3)}}\Big) \, \Big(
e^{-\frac{\pi i(n_2+k_2)^2}{8(K+3)}} \, 
-
e^{-\frac{\pi i(n_2-k_2)^2}{8(K+3)}}\Big) \, 
\overset{w}{\rm ch}{}^{(-)}_{H(\Lambda_{k_1,k_2}^{[K]})}
$$
\end{enumerate}
\item[{\rm 3)}]
\begin{enumerate}
\item[{\rm (i)}] \,\ $
\overset{w}{\rm ch}{}^{(+)}_{H(\Lambda_{n_1,n_2}^{[K]})}
\Big|_{T}
= \,\
- \, e^{-\frac{\pi i}{4}} \, (-1)^{\frac{K+3}{2}} \, 
e^{\frac{\pi i(n_1^2+n_2^2)}{4(K+3)}} \,\ 
\overset{w}{\rm ch}{}^{(-)}_{H(\Lambda_{n_1,n_2}^{[K]})}$

\vspace{1.5mm}

\item[{\rm (ii)}] \,\ $
\overset{w}{\rm ch}{}^{(-)}_{H(\Lambda_{n_1,n_2}^{[K]})}
\Big|_{T}
= \,\ e^{-\frac{\pi i}{4}} \, 
(-1)^{\frac{K+3}{2}} \, 
e^{\frac{\pi i(n_1^2+n_2^2)}{4(K+3)}} \,\ 
\overset{w}{\rm ch}{}^{(+)}_{H(\Lambda_{n_1,n_2}^{[K]})}$
\end{enumerate}
\end{enumerate}
\end{prop}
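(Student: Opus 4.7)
The plan is to deduce Proposition \ref{paper:prop:2025-1105c} directly from Propositions \ref{paper:prop:2025-1105b}, \ref{prop:2025-725a}, and \ref{prop:2025-727c}, with essentially no new computation. Since by definition
\[
\overset{w}{\rm ch}{}^{(\pm)}_{H(\Lambda)} \, = \,
\frac{\overset{w}{R}{}^{(\pm)} \cdot \overset{w}{\rm ch}{}^{(\pm)}_{H(\Lambda)}}{\overset{w}{R}{}^{(\pm)}} \, ,
\]
the closed-form expressions in part 1) follow at once by taking the formulas for $\overset{w}{R}{}^{(\pm)} \cdot \overset{w}{\rm ch}{}^{(\pm)}_{H(\Lambda_{n_1,n_2}^{[K]})}$ in Proposition \ref{paper:prop:2025-1105b} 1(i), 2(i) and dividing by the explicit products $\widetilde{\vartheta}_{11}(\tau,2z,t)\widetilde{\vartheta}_{01}(\tau,z,t)$ and $\widetilde{\vartheta}_{11}(\tau,2z,t)\widetilde{\vartheta}_{00}(\tau,z,t)$ recorded in Proposition \ref{prop:2025-727c} 1.

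For the modular transformations in parts 2) and 3), I would use the fact that the slash actions on $\overset{w}{R}{}^{(\pm)}$, $\overset{w}{R}{}^{(\pm)} \cdot \overset{w}{\rm ch}{}^{(\pm)}$ and $\overset{w}{\rm ch}{}^{(\pm)}$ are set up with compatible weights so that
\[
\overset{w}{\rm ch}{}^{(\pm)}_{H(\Lambda)}\big|_{A} \, = \,
\frac{\bigl(\overset{w}{R}{}^{(\pm)} \cdot \overset{w}{\rm ch}{}^{(\pm)}_{H(\Lambda)}\bigr)\big|_{A}}{\overset{w}{R}{}^{(\pm)}\big|_{A}}
\]
for any $A \in SL_2(\zzz)$. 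For part 2(i), substituting Proposition \ref{prop:2025-725a} 1(i) in the numerator and Proposition \ref{prop:2025-727c} 2(i), which gives $\overset{w}{R}{}^{(+)}|_{ST^2S} = -\,\overset{w}{R}{}^{(+)}$, in the denominator, the two minus signs cancel and the claim is recovered. For part 2(ii), Proposition \ref{prop:2025-727c} 2(ii) supplies $\overset{w}{R}{}^{(-)}|_{ST^2S} = i\,\overset{w}{R}{}^{(-)}$, so division by $i$ turns the coefficient $+1$ appearing in Proposition \ref{prop:2025-725a} 1(ii) into the claimed factor $-i$.

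For parts 3(i) and 3(ii), Proposition \ref{prop:2025-725a} 2 shows that $T$ sends $\overset{w}{R}{}^{(\pm)} \cdot \overset{w}{\rm ch}{}^{(\pm)}_{H(\Lambda_{n_1,n_2}^{[K]})}$ to $\mp(-1)^{(K+3)/2}\, e^{\pi i(n_1^2+n_2^2)/(4(K+3))}$ times $\overset{w}{R}{}^{(\mp)} \cdot \overset{w}{\rm ch}{}^{(\mp)}_{H(\Lambda_{n_1,n_2}^{[K]})}$, while Proposition \ref{prop:2025-727c} 2(iii) gives $\overset{w}{R}{}^{(\pm)}|_T = e^{\pi i/4}\,\overset{w}{R}{}^{(\mp)}$. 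The $(\pm) \leftrightarrow (\mp)$ swap in numerator and denominator is compatible, and the division yields the extra factor $e^{-\pi i/4}$ appearing in the statement.

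The only real difficulty is bookkeeping: one must carefully track the $\pm 1$, $\pm i$, and $e^{\pm \pi i/4}$ phases coming from the $\overset{w}{R}{}^{(\pm)}$ transformations and verify that they combine correctly with the phases already computed for $\overset{w}{R}{}^{(\pm)} \cdot \overset{w}{\rm ch}{}^{(\pm)}$. No further theta-function manipulations and no additional properties of the Gauss sum $\gamma_{2(K+3)}$ are needed, since all the nontrivial analytic input is already contained in Propositions \ref{prop:2025-725a} and \ref{prop:2025-727c}.
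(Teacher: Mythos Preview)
Your proposal is correct and matches the paper's approach exactly: the paper does not give a separate proof of this proposition but simply states that it follows from Propositions \ref{paper:prop:2025-1105b}, \ref{prop:2025-725a}, and \ref{prop:2025-727c}, which is precisely the division-of-numerator-by-denominator argument you outline. Your explicit tracking of the phase factors $-1$, $i$, and $e^{\pi i/4}$ in parts 2) and 3) fills in the bookkeeping the paper leaves implicit.
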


\end{document}